\newcommand{\fg}{{\mathfrak{g}}}
\newcommand{\bspm}{\left(\begin{smallmatrix}}
\newcommand{\espm}{\end{smallmatrix}\right)}
\theoremstyle{plain}
      \newtheorem{theorem}{Theorem}[section]
      \newtheorem{proposition}[theorem]{Proposition}
      \newtheorem{lemma}[theorem]{Lemma}
      \newtheorem{corollary}[theorem]{Corollary}
      \newtheorem{conjecture}[theorem]{Conjecture}
      \theoremstyle{definition}
      \newtheorem{definition}{Definition}
      \newtheorem{remark}[theorem]{Remark}
      \newtheorem{example}[theorem]{Example}
      \newtheorem*{conjecture*}{Conjecture}
\newcommand{\CC}{{\mathbb{C}}}
\DeclareMathOperator{\GL}{GL}
\DeclareMathOperator{\Sym}{Sym}
\DeclareMathOperator{\SL}{SL}
\DeclareMathOperator{\Sp}{Sp}
\DeclareMathOperator{\SO}{SO}
\newcommand{\m}{{\mathfrak{m}}}
\newcommand{\Lgroup}[1]{{\hskip-2 pt \,^L\hskip-1pt{#1}}}
\newcommand{\dualgroup}[1]{{\widehat{#1}}}
\newcommand{\g}{{\mathfrak{g}}}
\newcommand\Sl{\mathfrak{sl}}
\newcommand\U{\mathfrak{u}}
\newcommand\T{\mathfrak{t}}
\newcommand\lev{\mathfrak{l}} 
\newcommand\p{\mathfrak{p}} 
\newcommand\bo{\mathfrak{b}} 
\DeclareMathOperator{\Irrep}{Irrep}
\DeclareMathOperator{\codim}{codim}
\DeclareMathOperator{\Ad}{Ad}
\DeclareMathOperator{\ad}{ad}
\newcommand{\abs}[1]{{\vert #1 \vert}}
\newcommand{\ceq}{{\, :=\, }}
\newcommand{\tq}{{\ \vert\ }}
\newcommand{\iso}{{\ \cong\ }}
\DeclareMathOperator{\diag}{diag}
\renewcommand{\m}{{\mathfrak{m}}}
\newcommand{\Perv}{\mathsf{Per}}
\newcommand{\Loc}{\mathsf{Loc}}
\newcommand{\Rep}{\mathsf{Rep}}
\newcommand{\Evs}{\operatorname{\mathsf{E}\hskip-1pt\mathsf{v}\hskip-1pt\mathsf{s}}}
\newcommand{\NEvs}{\operatorname{\mathsf{N}\hskip-1pt\mathsf{E}\hskip-1pt\mathsf{v}\hskip-1pt\mathsf{s}}}
\newcommand{\IC}{{\mathcal{I\hskip-1pt C}}}
\newcommand{\RPhi}{{\mathsf{R}\hskip-0.5pt\Phi}}
\newcommand{\labitem}[2]{
\def\@itemlabel{\textbf{#1}}
\item
\def\@currentlabel{#1}\label{#2}}
\newcommand{\1}{{\mathbbm{1}}}
\newcommand{\C}{\mathbb{C}}
\newcommand{\Aut}{\text{Aut}}
\newcommand{\Hom}{\text{Hom}}
\renewcommand{\Im}{\operatorname{Im}}
\newcommand{\ABV}{{\mbox{\raisebox{1pt}{\scalebox{0.5}{$\mathrm{ABV}$}}}}}
\newcommand{\ABVpure}{{\mbox{\raisebox{1pt}{\scalebox{0.5}{$\mathrm{ABV, pure}$}}}}}
\newcommand{\pure}{{\mbox{\raisebox{1pt}{\scalebox{0.5}{$\mathrm{pure}$}}}}}
\newcommand{\Fr}{{\rm {Fr}}}
     \newcommand{\ra}{\rightarrow}
\newcommand{\wh}{\widehat}
\newcommand{\wpair}[1]{\left\{{#1}\right\}}
\newcommand{\bpm}{\begin{pmatrix}}
\newcommand{\epm}{\end{pmatrix}}
\newcommand{\frob}{\mathfrak{fr}}
\author[C. Cunningham]{Clifton Cunningham}
\address{Department of Mathematics and Statistics, University of Calgary, 
2500 University Drive NW, 
Calgary, Alberta, 
T2N 1N4, 
Canada}
\email{clifton@automorphic.ca}
\thanks{Clifton Cunningham's research is supported by NSERC Discovery Grant RGPIN-2020-05220. He is grateful to the \href{www.fields.utoronto.ca}{Fields Institute for Research in Mathematical Sciences} where some of this research was conducted.}
\author[S. Dijols]{Sarah Dijols}
\address{University of British Columbia, PIMS, Office 4132, Earth Sciences Building, 2207 Main Mall, Vancouver, BC V6T 1Z4}
\email{sarah.dijols@math.ubc.ca}
\thanks{Sarah Dijols's research is supported by the Pacific Institute for Mathematical Sciences.}
\author[A. Fiori]{Andrew Fiori}
\address{Department of Mathematics and Statistics, University of Lethbridge,
4401 University Drive,
Lethbridge, Alberta,
T1K 3M4,
Canada}
\email{andrew.fiori@uleth.ca}
\thanks{Andrew Fiori thanks and acknowledges the University of Lethbridge for their financial support as well as the support of NSERC Discovery Grant RGPIN-2020-05316.}
\author[Q. Zhang]{Qing Zhang}
\address{School of Mathematics and Statistics, Huazhong University of Science and Technology, Wuhan, 430074, China}
\email{qingzh@hust.edu.cn}
\thanks{Qing Zhang's research is supported by NSFC grant 12371010.}
\let\@wraptoccontribs\wraptoccontribs
\title[Whittaker normalization of $p$-adic ABV-packets]{Whittaker normalization of $p$-adic ABV-packets and Vogan's conjecture for tempered representations}
\date{\today}                                           % Activate to display a given date or no date
\begin{document}

\begin{abstract}
%We study Whittaker normalization of ABV-packets for $p$-adic groups and s
We show that ABV-packets for $p$-adic groups do not depend on the choice of a Whittaker datum, but the function from the ABV-packet to representations of the appropriate equivariant fundamental group does, and we find this dependence exactly.
We study the relation between open parameters and tempered parameters and Arthur parameters and generic representations. 
We state a genericity conjecture for ABV-packets and prove this conjecture for quasi-split classical groups and their pure inner forms.
Motivated by this we study ABV-packets for open parameters and prove that they are L-packets, and further that the function from the packet to the equivariant fundamental group given by normalized vanishing cycles coincides with the one given by the Langlands correspondence.
From this conclude Vogan's conjecture on A-packets for tempered representations: ABV-packets for tempered parameters are Arthur packets and the function from the packet to the equivariant fundamental group given by normalized vanishing cycles coincides with the one given by Arthur.
\end{abstract}

\maketitle

\setcounter{section}{-1}

\section{Introduction}

 Arthur packets are a key concept in establishing the local Langlands correspondence for classical groups, see \cites{Arthur:book, Mok:Unitary, KMSW:Unitary}.  
 ABV-packets for $p$-adic reductive groups were defined in \cite{CFMMX} and \cite{Vogan:Langlands}, inspired by \cite{ABV} which treats real groups, and are supposed to give a purely geometric construction of A-packets and also provide a generalization of Arthur packets to all connected reductive groups and all $L$-parameters, not only those of Arthur type.

We briefly recall the definition of ABV-packets for $p$-adic groups. For unexplained notations and more details, see Section \ref{sec:bacground}. Let $F$ be a non-archimedean local field with Weil group $W_F$ and let $G$ be a connected reductive group over $F$ with $L$-group $\Lgroup{G}$. 
Let $\Phi(G)$ be the set of equivalence classes of Langlands parameters of $G$. Given an infinitesimal parameter $\lambda: W_F\to \Lgroup{G}$, consider the Vogan variety $V_\lambda=\wpair{x\in \widehat{\mathfrak{g}}: \Ad(\lambda(w))(x)=\abs{w}_F\, x,\ \forall w\in W_F}$ and let $H_\lambda$ be the centralizer of $\lambda$ in $\widehat{G}$. Then it is known that the set $\{\phi\in \Phi(G): \lambda_\phi=\lambda\}$ is parametrized by $V_\lambda\sslash H_\lambda$, the orbits of $H_\lambda$ action on $V_\lambda$ by conjugation.  Vogan's reformulation of the local Langlands correspondence gives a bijection 
$$\mathcal{P}_{\mathfrak{w}}: \Pi^\pure_\lambda(G) \to \Perv_{H_{\lambda}}(V_\lambda)^{\mathrm{simple}}_{/\mathrm{iso}},$$
where 
\begin{itemize}
    \item $\mathfrak{w}$ is a choice of Whittaker data,
    \item $\Pi^\pure_\lambda(G)$ is the set of irreducible representations of $G(F)$, and its pure inner forms, with infinitesimal parameter $\lambda$, and
    \item $\Perv_{H_{\lambda}}(V_\lambda)^{\mathrm{simple}}_{/\mathrm{iso}} $ is the set of isomorphism classes of simple objects in the category of $H_\lambda$-equivariant perverse sheaves on $V_\lambda$.
\end{itemize}   
The key tool in the construction of ABV-packets is the functor 
\[
\NEvs_{\phi}: \Perv_{H_{\lambda}}(V_\lambda)\to  \Rep(A^\ABV_{\phi}),
\]
where $\phi$ is any Langlands parameter with $\lambda_\phi=\lambda$, defined in \cite{CFMMX}. 
Using this, one defines
$$\Pi_{\phi,\mathfrak{w}}^{\ABVpure}(G)=\{\pi\in \Pi^\pure_\lambda(G)\tq \NEvs_{\phi}(\mathcal{P}_{\mathfrak{w}}(\pi))\ne 0  \}$$
and
$$\Pi_{\phi,\mathfrak{w}}^{\ABV}(G)=\{\pi\in \Pi_\lambda(G)\tq \NEvs_{\phi}(\mathcal{P}_{\mathfrak{w}}(\pi))\ne 0  \}.$$
Moreover, there is a natural map $$\NEvs_{\phi}\circ \mathcal{P}_{\mathfrak{w}}:\Pi_{\phi,\mathfrak{w}}^{\ABVpure}(G)\to  \Rep(A^\ABV_{\phi}). $$
Vogan's conjecture says that if $G$ is a group for which we know Arthur's conjecture, such as a quasisplit classical group or an inner twist of such, and if $\phi$ is of Arthur type so $\phi=\phi_\psi$ for an Arthur parameter $\psi$, then one should have 
\begin{equation}
    \Pi_{\phi,\mathfrak{w}}^{\ABVpure}(G)=\Pi^\pure_\psi(G)
\end{equation} 
and the map 
\[
\NEvs_{\phi}\circ \mathcal{P}_{\mathfrak{w}}:\Pi_{\phi,\mathfrak{w}}^{\ABVpure}(G)\to  \Rep(A^\ABV_{\phi})
\]
should agree with the pure packet version of Arthur's map 
\[
\begin{array}{rl}
\Pi^\pure_\psi(G) & \to \widehat{A_\psi} \\
\pi &\mapsto \langle \pi ,\ \rangle_\psi 
\end{array}
\]
under the identification $A^\ABV_{\phi}\cong A_\psi$.  Such an expectation has been established for $\GL(n)$ in \cites{CR, CR2}. 
In this paper we prove this for tempered parameters.

Unlike the trace formula approach in \cites{Arthur:book, Mok:Unitary, KMSW:Unitary}, which established the local Langlands correspondence by proving the existence of Arthur packets, the definition of ABV-packets as explained above uses the local Langlands correspondence {\it a priori}.   
But the local Langlands correspondence depends on a choice of Whittaker normalization, which is not unique in general. Our first goal in this paper is to study how ABV-packets depend on the Whittaker normalization in the local Langlands correspondence. 
 Our first result is 
 \begin{theorem}[Theorem \ref{theorem:CoW}]
Assume the standard desiderata of the local Langlands correspondence for $G$, as articulated in Section~\ref{sec:desiderata}. 
For any Langlands parameter $\phi$ for $G$, the pure ABV-packet $\Pi_{\phi,\mathfrak{w}}^{\ABVpure}(G)$ is independent of the choice of the Whittaker data $\mathfrak{w}$ and the map $\NEvs_{\phi}\circ \mathcal{P}_{\mathfrak{w}}:\Pi_{\phi,\mathfrak{w}}^{\ABVpure}(G)\to  \Rep(A^\ABV_{\phi})$ depends on $\mathfrak{w}$ in a uniform way. See Theorem \ref{theorem:CoW} for a precise statement. 
 \end{theorem}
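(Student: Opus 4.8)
The plan rests on the observation that the functor $\NEvs_\phi\colon \Perv_{H_{\lambda}}(V_\lambda)\to\Rep(A^\ABV_\phi)$ is purely geometric: it depends only on $\lambda$ and on the $H_\lambda$-orbit $C_\phi$ attached to $\phi$, not on any Whittaker datum. Consequently all dependence of $\Pi_{\phi,\mathfrak{w}}^{\ABVpure}(G)$ and of $\NEvs_\phi\circ\mathcal{P}_{\mathfrak{w}}$ on $\mathfrak{w}$ is concentrated in the Vogan bijection $\mathcal{P}_{\mathfrak{w}}$. I would therefore proceed in three steps: (i) use the Whittaker-normalization desideratum of Section~\ref{sec:desiderata} to describe the self-map $\mathcal{P}_{\mathfrak{w}'}\circ\mathcal{P}_{\mathfrak{w}}^{-1}$ of $\Perv_{H_{\lambda}}(V_\lambda)^{\mathrm{simple}}_{/\mathrm{iso}}$; (ii) recognise this self-map as tensoring by a single geometrically constant, rank-one, $H_\lambda$-equivariant local system $\mathcal{L}_{\mathfrak{w},\mathfrak{w}'}$ on $V_\lambda$; and (iii) show that $\NEvs_\phi$ carries $-\otimes\mathcal{L}_{\mathfrak{w},\mathfrak{w}'}$ to $-\otimes\epsilon$ for a one-dimensional character $\epsilon$ of $A^\ABV_\phi$. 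The theorem then follows at once, since a one-dimensional representation is in particular nonzero.

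For steps (i) and (ii): the desideratum asserts that replacing $\mathfrak{w}$ by $\mathfrak{w}'$ fixes each $L$-packet $\Pi_\phi(G)$, and its pure variant, as a set, and multiplies the internal parametrization $\langle\,\cdot\,,\pi\rangle_{\mathfrak{w}}$ by a character $\epsilon_{\mathfrak{w},\mathfrak{w}'}$ of the relevant component group that is independent of $\phi$ and of $\pi$ --- this uniformity is precisely the "uniform way" in the statement. Since the $L$-parameter of $\pi$ does not depend on $\mathfrak{w}$, transporting this through $\mathcal{P}_{\mathfrak{w}}$ and through the description of simple objects of $\Perv_{H_{\lambda}}(V_\lambda)$ by pairs $(C,\mathcal{L})$ --- an $H_\lambda$-orbit with a simple $H_\lambda$-equivariant local system on it --- shows that $\mathcal{P}_{\mathfrak{w}'}(\pi)$ is supported on the same orbit $C_{\phi_\pi}$ as $\mathcal{P}_{\mathfrak{w}}(\pi)$, with its equivariant local system twisted by the restriction of $\epsilon_{\mathfrak{w},\mathfrak{w}'}$ to the equivariant fundamental group $A_{\phi_\pi}$ of $C_{\phi_\pi}$. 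One then checks that $\epsilon_{\mathfrak{w},\mathfrak{w}'}$, which on the dual side arises from the class of $(\mathfrak{w},\mathfrak{w}')$ by Kottwitz--Langlands duality for the center, is the restriction of a character $\tilde\epsilon_{\mathfrak{w},\mathfrak{w}'}$ of $\pi_0(H_\lambda)=\pi_0(Z_{\widehat{G}}(\lambda))$, compatibly for every orbit, using that $V_\lambda\subseteq\widehat{\mathfrak{g}}$ is connected and that the relevant central subgroup of $\widehat{G}$ acts trivially on $V_\lambda$. Taking $\mathcal{L}_{\mathfrak{w},\mathfrak{w}'}$ to be the constant sheaf on $V_\lambda$ equipped with the $H_\lambda$-equivariant structure given by $\tilde\epsilon_{\mathfrak{w},\mathfrak{w}'}$, step (i) becomes the clean statement $\mathcal{P}_{\mathfrak{w}'}(\pi)\cong\mathcal{P}_{\mathfrak{w}}(\pi)\otimes\mathcal{L}_{\mathfrak{w},\mathfrak{w}'}$ for every $\pi\in\Pi^{\pure}_\lambda(G)$.

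For step (iii): unwinding the construction of $\NEvs_\phi$ from \cite{CFMMX} --- a composite of restriction functors to and along linear subspaces of $V_\lambda$, a Fourier--Deligne transform $\Ft$, the $H_\lambda$-equivariant vanishing-cycles functor $\RPhi$ attached to the pairing function at a point of the conormal to $C_\phi$, restriction to that point, and a normalising shift --- every constituent is an $H_\lambda$-equivariant functor, and each commutes with $-\otimes\mathcal{L}_{\mathfrak{w},\mathfrak{w}'}$: the underlying complex of $\mathcal{L}_{\mathfrak{w},\mathfrak{w}'}$ is the constant sheaf, so the twist is invisible to the underlying complexes (projection formula, together with the insensitivity of $\Ft$ and $\RPhi$ to tensoring by a geometrically trivial local system), while its equivariant structure, being a character of $\pi_0(H_\lambda)$, is transported unchanged through every equivariant functor. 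Hence $\NEvs_\phi(\mathcal{F}\otimes\mathcal{L}_{\mathfrak{w},\mathfrak{w}'})\cong\NEvs_\phi(\mathcal{F})\otimes\epsilon$ naturally in $\mathcal{F}\in\Perv_{H_{\lambda}}(V_\lambda)$, where $\epsilon$ is the character of $A^\ABV_\phi$ obtained by inflating $\tilde\epsilon_{\mathfrak{w},\mathfrak{w}'}$ along $A^\ABV_\phi\to\pi_0(H_\lambda)$. Combining with steps (i) and (ii), $\NEvs_\phi(\mathcal{P}_{\mathfrak{w}'}(\pi))\cong\NEvs_\phi(\mathcal{P}_{\mathfrak{w}}(\pi))\otimes\epsilon$ for every $\pi\in\Pi^{\pure}_\lambda(G)$, so the two sides are nonzero together --- giving $\Pi_{\phi,\mathfrak{w}'}^{\ABVpure}(G)=\Pi_{\phi,\mathfrak{w}}^{\ABVpure}(G)$ --- and on this common packet $\NEvs_\phi\circ\mathcal{P}_{\mathfrak{w}'}=(\NEvs_\phi\circ\mathcal{P}_{\mathfrak{w}})\otimes\epsilon$, which is the precise assertion recorded in Theorem~\ref{theorem:CoW}.

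I expect the main obstacle to be step (ii): matching the representation-theoretic Whittaker character $\epsilon_{\mathfrak{w},\mathfrak{w}'}$ --- a priori defined through Galois cohomology of the center and Kottwitz duality --- with an honest geometric twist by an $H_\lambda$-equivariant local system on $V_\lambda$, that is, showing it is inflated from $\pi_0(Z_{\widehat{G}}(\lambda))$ coherently across all orbits and threading it correctly through the various maps relating $A_\phi$, $A^\ABV_\phi$, $\pi_0(H_\lambda)$, and the central groups $\pi_0(Z(\widehat{G})^\Gamma)$ and $Z(\widehat{G}_{\mathrm{sc}})^\Gamma$. Step (iii), though it demands care with equivariant structures, should reduce to the standard projection-formula and base-change behaviour of the functors assembling $\NEvs_\phi$, and step (i) is a direct translation of the desideratum.
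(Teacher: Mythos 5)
Your proposal is correct and follows essentially the same route as the paper: the paper realizes the change-of-Whittaker character $\varrho^{\lambda}_{\mathfrak{w},\mathfrak{w}'}$ of $\pi_0(H_\lambda)$ as an $H_\lambda$-equivariant local system on $V_\lambda$ whose underlying sheaf is constant, shows $\mathcal{P}_{\mathfrak{w}'}(\pi)=\mathcal{P}_{\mathfrak{w}}(\pi)\otimes\underline{\varrho_{\mathfrak{w},\mathfrak{w}'}}_{V_\lambda}$ (your steps (i)--(ii), with the uniformity you flag supplied by the factorization $\varrho^{\phi}_{\mathfrak{w},\mathfrak{w}'}=\varrho^{\lambda}_{\mathfrak{w},\mathfrak{w}'}\circ\iota_{\lambda\leq\phi}$), and then pushes the twist through $\NEvs$ exactly as in your step (iii).
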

 \noindent This result is parallel with how L-packets depending on Whittaker data $\mathfrak{w}$, in the sense that as sets these also do not depend on Whittaker data but the function to characters of the appropriate equivariant fundamental group does. 
 %\todo[inline]{Is this true? Do we know how Arthur's map depends on $\mathfrak{w}$? \\ Yes, through the transfer factors. But it's not easy to extract from Arthur's book. -CC}
%
%To do this, we review the definition of ABV-packets $\Pi^{\ABVpure}_\phi(G)$ for $p$-adic and the function
%\[
%\NEvs_\phi \circ\mathcal{P}_{\mathfrak{w}}\tq \Pi_\phi^{\ABVpure}(G) \to \Rep(A^\ABV_\phi),
%\]
%building on \cite{CFMMX} and paying close attention to how it relies on the Langlands correspondence. 
%We show that $\Pi_\phi^{\ABVpure}(G)$ does not depend on the choice of Whittaker datum $\mathfrak{w}$ but the function $\Pi_\phi^{\ABVpure}(G) \to \Rep(A^\ABV_\phi)$ does, and we make that dependence precise in Theorem~\ref{theorem:CoW}.
%attaching representations of the appropriate microlocal equivariant fundamental group to representation in the ABV-packet, introduced in \cite{CFMMX}, does depend on the choice of Whittaker data, 
%This is exactly parallel to what is know for L-packets and A-packets, in the sense that as sets these also do not depend on Whittaker data but the function to characters of the appropriate equivariant fundamental group does.
%that the groups here are indeed equivariant fundamental groups is well-known and explained in \cite{CFMMX}*{Sections 4.6 and 6.7 }.
Indeed, we show that $\NEvs_\phi\circ \mathcal{P}_\mathfrak{w}$ is compatible with the function 
\[
J(\mathfrak{w}) : \Pi^\pure_\psi(G) \to \Rep(A_\phi)
\]
from the Langlands correspondence by showing that the following diagram commutes; see Theorem~\ref{theorem:compatibility}.
\[
\begin{tikzcd}
\Pi^\pure_\phi(G) \arrow[>->]{d}  \arrow{rr}{J(\mathfrak{w})} && \Rep(A_\phi) \arrow[>->]{d}\\
\Pi^{\ABVpure}_{\phi}(G) \arrow{rr}{\NEvs_\phi \circ\mathcal{P}_{\mathfrak{w}}} && \Rep(A_\phi^\ABV)
\end{tikzcd}
\]
Note that here we use the \emph{pure} L-packet $\Pi^\pure_\psi(G)$ for $\phi$, meaning the union of the L-packets for $G(F)$ and its pure inner forms.

One starting point of the definition of pure ABV-packets is the following fact: given an infinitesimal parameter $\lambda: W_F:\to \Lgroup{G}$, then the set $\wpair{\phi\in \Phi(G)\tq \lambda_\phi=\lambda}$ is parametrized by the space $V_\lambda\sslash H_{\lambda}$ of $H_\lambda$-orbits on $V_\lambda$ under the conjugation action. Although the structure $V_\lambda\sslash H_\lambda$ can be very complicated, it contains two special orbits: the open orbit and the closed orbit. It turns out that the L-packets for these two Langlands parameters have very special properties, as observed in the case when $G=G_2$ in \cite{CFZ:unipotent}.
%
%\todo{Are we first? Probably be more modest here \\ Yes, we are first. We can be more modest, too, by not making a declared definition. Is that what you had in mind? -CC}
A Langlands parameter $\phi$ is said to be open if its $H_\lambda$-orbit is open in $V_\lambda$; see Definition~\ref{def:open}.
Our next goal in this paper is to investigate the properties of L-packets and ABV-packets for open Langlands parameters. Our main result on this is
\begin{theorem}[See Proposition~\ref{prop: temper=arthur+open}, Proposition~\ref{prop:openL}, Corollary~\ref{corollary: GI}, Corollary~\ref{corollary: classical}]\label{theorem-open-intro}
Let $G$ be a connected reductive group over $F$. Then
\begin{enumerate}
\item An $L$-parameter is open and of Arthur type if and only if it is tempered.
\item An $L$-parameter $\phi$ for $G$ is open if and only if $L(s,\phi,\Ad)$ is regular at $s=1$. 
\item If $G$ is a quasi-split classical group then $\phi$ is open if and only if the L-packet $\Pi_\phi(G)$ contains a generic representation.
\item If $G$ is a pure inner form of a quasi-split classical group, then $\phi$ is open if and only if $\Pi_\phi^{\ABV\pure}(G)$ contains a generic representation.
In this case the generic representation is of the quasi-split group.
\end{enumerate}
\end{theorem}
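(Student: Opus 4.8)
The plan is to prove the four parts of Theorem~\ref{theorem-open-intro} in the order listed, since each part builds on the previous ones and on the Whittaker-normalization results already established.

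\textbf{Part (1): open and of Arthur type $\iff$ tempered.}
First I would recall that for an Arthur parameter $\psi: W_F\times \SL_2\times\SL_2\to \Lgroup{G}$ the associated $L$-parameter $\phi_\psi$ is obtained by restricting to the diagonal $\SL_2$, and that $\phi_\psi$ is tempered exactly when $\psi$ is trivial on the second (Arthur) $\SL_2$. On the geometric side, recall from \cite{CFMMX} that the Vogan variety $V_\lambda$ carries an action of $H_\lambda$ with finitely many orbits, that the orbit attached to $\phi$ has a description in terms of the nilpotent part of $\phi$, and that $\phi$ is tempered precisely when this nilpotent part is as large as possible compatible with the Hodge/weight grading --- equivalently when the orbit is dense. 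The forward direction (tempered $\Rightarrow$ open and of Arthur type) is classical: a tempered parameter is of Arthur type with trivial Arthur-$\SL_2$, and by the orbit description its $H_\lambda$-orbit is the open one. For the converse I would argue that if $\phi=\phi_\psi$ is open then the nilpotent $E\in V_\lambda$ attached to $\phi$ lies in the open orbit; but the Arthur-$\SL_2$ forces $E$ to lie in a strictly smaller orbit unless the Arthur-$\SL_2$ is trivial, whence $\phi$ is tempered. This is a purely combinatorial/geometric comparison of orbits and I expect it to be short.

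\textbf{Part (2): open $\iff$ $L(s,\phi,\Ad)$ regular at $s=1$.}
Here I would use the standard unramified-Euler-factor description: $L(s,\phi,\Ad)$ is (up to a factor attached to the wild/tame inertia which is regular at $s=1$) the characteristic polynomial factor $\det\!\bigl(1-q^{-s}\Frob\mid (\widehat{\g})^{N}\bigr)^{-1}$ evaluated on the inertia-invariants of the adjoint representation twisted by the nilpotent $N$. A pole at $s=1$ corresponds to a Frobenius eigenvalue $q$ on $\bigl(\widehat{\g}\bigr)^{I_F}/(\text{image of }N)$, i.e.\ on the cokernel of $\ad(N)$ acting on the appropriate graded piece. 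That cokernel is exactly $T_E(\text{orbit})^{\perp}$ inside the weight-one piece $\widehat{\g}_1 \cong$ (the tangent space of $V_\lambda$), so it vanishes precisely when the $H_\lambda$-orbit of $E$ is open in $V_\lambda$. Thus I would phrase the proof as: translate ``regular at $s=1$'' into ``$\ad(E)$ is surjective onto $\widehat{\g}_1$'', and then invoke the identification $\widehat{\g}_1 = T_E V_\lambda$ together with the fact that the orbit through $E$ is open iff $\mathfrak{h}_\lambda\cdot E = T_E V_\lambda$.

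\textbf{Parts (3) and (4): genericity.}
For part (3), let $G$ be quasi-split classical. The key input is the Langlands--Shahidi / Arthur description of generic members of tempered $L$-packets: Shahidi's theorem gives that a tempered $L$-packet contains a (unique, $\mathfrak{w}$-)generic member, and conversely by part (1) and part (2) combined with the theory of local $L$-factors, the $L$-packet $\Pi_\phi(G)$ contains a generic representation iff $\phi$ is tempered iff $\phi$ is open. The nontrivial direction is ``$\Pi_\phi(G)$ contains a generic representation $\Rightarrow \phi$ open''; for this I would use that a generic representation has $\ad$-$L$-factor with prescribed behaviour (Shahidi), forcing regularity at $s=1$, and then quote part (2). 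For part (4), let $G'$ be a pure inner form of the quasi-split $G$. Genericity is only defined relative to a Whittaker datum, which exists only on the quasi-split form, so the statement must be about $\Pi^{\ABV\pure}_\phi(G')$, the \emph{pure} ABV-packet, which by definition includes the members living on $G$ itself. Using Theorem~\ref{theorem:compatibility} (the commuting diagram relating $\NEvs_\phi\circ\mathcal P_{\mathfrak w}$ to the Langlands correspondence) together with the fact (to be proved, or extracted from the open-orbit analysis) that for open $\phi$ the ABV-packet coincides with the $L$-packet --- this is exactly the content of Proposition~\ref{prop:openL} --- one gets $\Pi^{\ABV\pure}_\phi(G') = \Pi^\pure_\phi(G')$, so it contains a generic representation iff the $L$-packet of the quasi-split form does, iff $\phi$ is open by part (3); and the generic representation necessarily sits on the quasi-split piece since that is the only piece where genericity makes sense and, by Shahidi, exactly one generic member exists there.

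\textbf{Main obstacle.}
I expect the geometric heart --- the clean identification in part (2) of ``cokernel of $\ad(E)$ on $\widehat{\g}_1$'' with ``$T_E V_\lambda / \mathfrak h_\lambda\cdot E$'', and matching the Frobenius-eigenvalue bookkeeping of the adjoint $L$-factor with the $H_\lambda$-orbit stratification of $V_\lambda$ --- to be the step requiring the most care, because one must track the $W_F$-action and the $|\cdot|_F$-twist in the definition of $V_\lambda$ precisely enough to see that the pole at $s=1$ (and not some other integer) is the one detecting openness. Part (1) is essentially combinatorial once the orbit dictionary is in place, and parts (3)--(4) are largely a matter of correctly invoking Shahidi's genericity theorem, Arthur's/Mok's classification, and Proposition~\ref{prop:openL}; the subtlety there is purely bookkeeping about which inner form carries the generic vector.
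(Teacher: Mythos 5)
Your part (2) is essentially sound and matches the paper's argument: identifying ``$L(s,\phi,\Ad)$ regular at $s=1$'' with the vanishing of the annihilator of $T_{x_\phi}C_\phi$ in the $q$-eigenspace is the same as the paper's statement (Proposition~\ref{prop:openL}) that regularity means the conormal fibre of $\Lambda_{C_\phi}$ over $x_\phi$ is zero, which happens iff $C_\phi$ is open; only the passage from $\ker(\ad N)$ (which is what appears in the $L$-factor) to your cokernel needs the small Killing-form duality you yourself flag. The genuine problems are in parts (1), (3) and (4). In part (3) your chain ``$\Pi_\phi(G)$ contains a generic representation iff $\phi$ is tempered iff $\phi$ is open'' is false on both links: open does not imply tempered (the paper's own example $\abs{w}_F^{s}\otimes\Sym^{n-1}$ for $\GL_n$, $s\neq 0$, is open, not tempered, and its singleton $L$-packet is generic), and ``generic iff tempered'' is the enhanced Shahidi conjecture about \emph{A-packets} (\cite{LLS}*{Conjecture 1.2}), not a fact about $L$-packets. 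Consequently your route for ``$\phi$ open $\Rightarrow$ $\Pi_\phi(G)$ generic'' via Shahidi's genericity of tempered packets only covers open parameters of Arthur type. What is actually needed, and what the paper uses, is the Gross--Prasad--Rallis conjecture \cite{GP}*{Conjecture 2.6} in the form proved by Gan--Ichino, \cite{GI}*{Theorem B.2}: $\Pi_\phi(G)$ is generic iff $L(s,\phi,\Ad)$ is regular at $s=1$; combined with Proposition~\ref{prop:openL} this gives Corollary~\ref{corollary: GI} immediately, with no detour through temperedness. Your Shahidi-type argument gives at best the direction ``generic $\Rightarrow$ regular at $s=1$''; the converse is exactly the hard half of \cite{GP}*{Conjecture 2.6} and must be quoted.

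In part (4) your argument is circular in the hard direction: you invoke $\Pi^{\ABVpure}_\phi(G)=\Pi^\pure_\phi(G)$, but that equality (which is Theorem~\ref{theorem:mainopen}, not Proposition~\ref{prop:openL}) holds only for \emph{open} $\phi$, so it cannot be used to prove ``contains a generic representation $\Rightarrow$ $\phi$ open''. For non-open $\phi$ the ABV-packet is in general strictly larger than the pure $L$-packet, and the entire content of this direction is that the generic representation --- whose own parameter is open --- does not reappear as a coronal member of $\Pi^{\ABVpure}_\phi(G)$ for any non-open $\phi$ with the same infinitesimal parameter. The paper proves this by a vanishing-cycles computation (Lemma~\ref{cor: generic appear in every standard module}), after using Theorem~\ref{theorem:CoW} to reduce to a Whittaker datum for which the generic member corresponds to the trivial local system on the open orbit: since $V_\lambda$ is smooth, $\IC(\1_{C^o})=\1_{V_\lambda}[d]$, and $\NEvs_C$ of the constant sheaf vanishes unless $C=C^o$. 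Nothing in your sketch supplies this step. Similarly, in part (1) both key geometric claims are asserted rather than proved: ``tempered $\Rightarrow$ open'' is one of the main points of the paper (obtained in Proposition~\ref{prop: temper=arthur+open} from $y_\psi=0$, the regular-conormal statement \cite{CFMMX}*{Proposition 6.6.1}, and orbit duality, Lemma~\ref{lemma:open}), and your converse claim that a nontrivial Arthur $\SL_2$ ``forces $E$ into a strictly smaller orbit'' is precisely what has to be shown; the paper derives it from the vanishing of the conormal fibre over the open orbit, which forces $y_\psi=0$, and then Jacobson--Morozov to conclude the Arthur $\SL_2$ is trivial.
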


In fact, Corollary~\ref{corollary: GI} is the following statement: if $G$ is quasi-split classical group then $\phi$ is open if and only if $\Pi_\phi^\pure(G)$ contains a generic representation. This is equivalent to Theorem~\ref{theorem-open-intro} (3) because any generic representation appearing in $\Pi_\phi^\pure(G)$ is necessarily a representation of the quasi-split group $G(F)$.

In Proposition~\ref{prop: temper=arthur+open} we prove that tempered parameters are precisely those that are open and also of Arthur type.
In Appendix~\ref{sec:discrete} we give a second proof that tempered parameters are open, 
%which is a corollary of Proposition~\ref{prop: temper=arthur+open}, 
using a different strategy than that used to prove Proposition~\ref{prop: temper=arthur+open}.

Turning to genericity,  we prove that generic parameters are open for quasi-split classical groups and their pure inner forms and conjecture that this is true in complete generality; see   Conjecture~\ref{conj: GP2} and Corollary~\ref{corollary: GI}.
Conjecture~\ref{conj: genericabv} proposes that the ABV-packet for a Langlands parameter $\phi$ contains a generic representation if and only if the parameter $\phi$ is open.
Theorem~\ref{theorem:maingeneric} proves Conjecture~\ref{conj: genericabv} for quasi-split classical groups and their pure inner forms.

Motivated by Conjecture~\ref{conj: genericabv}, we study ABV-packets for open Langlands parameters in full generality.
In Theorem~\ref{theorem:mainopen} we prove that if $\phi$ is open then the ABV-packet attached to $\phi$ is the pure L-packet attached to $\phi$, and the parametrization of these packets by the enhanced Langlands correspondence coincides with the one built from vanishing cycles in \cite{CFMMX}.
\[
\begin{tikzcd}
\Pi^\pure_\phi(G) \arrow[equal]{d}  \arrow{rr}{J(\mathfrak{w})} && \Rep(A_\phi) \arrow[equal]{d}\\
\Pi^{\ABVpure}_{\phi}(G) \arrow{rr}{\NEvs_\phi \circ\mathcal{P}_{\mathfrak{w}}} && \Rep(A_\phi^\ABV).
\end{tikzcd}
\]
From this we deduce Vogan's conjecture on A-packets of tempered representations of p-adic groups, Corollary~\ref{cor:vogantempered}: 
if $\psi$ is a tempered Arthur parameter, then the pure A-packet $\Pi_\psi^\pure(G)$ is the ABV-packet $\Pi^{\ABVpure}_\phi(G)$ and Arthur's map $A(\mathfrak{w})$ to the component group $A_\psi$ coincides with the map $\NEvs_{\phi_\psi}\mathcal{P}_{\mathfrak{w}}$ built from vanishing cycles.
\[
\begin{tikzcd}
\Pi^\pure_\psi(G) \arrow[equal]{d}  \arrow{rr}{A(\mathfrak{w})} && \Rep(A_\psi) \arrow[equal]{d}\\
\Pi^{\ABVpure}_{\phi_\psi}(G) \arrow{rr}{\NEvs_{\phi_\psi} \circ\mathcal{P}_{\mathfrak{w}}} && \Rep(A^\ABV_{\phi_\psi})
\end{tikzcd}
\]
That $A_\phi$, $A_\psi$ and $A^\ABV_\phi$ are all equivariant fundamental groups is explained in \cite{CFMMX}*{Sections 4.7, 6.7 and 7.9}.

Finally, as a consequence of the relationship between tempered parameters and open parameters, we prove the Vogan's conjecture for tempered parameters:
\begin{theorem}[Corollary \ref{cor:vogantempered}]
Let $G$ be a connected reductive group over $F$ for which Arthur's conjectures are true. 
If $\phi$ be a tempered $L$-parameter for $G$ then 
$$\Pi_\phi^{\ABV}(G)=\Pi_\psi(G),$$
where $\psi$ is the Arthur parameter corresponding to $\phi$.
%In particular, $$\Pi_\phi^{\ABV}(G(F))=\Pi^\pure_\psi(G(F)).$$
See Corollary \ref{cor:vogantempered}, which is a stronger result, for more precision. 
\end{theorem}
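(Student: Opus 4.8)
The plan is to obtain this as a formal consequence of Theorem~\ref{theorem:mainopen} (the structure of ABV-packets for open parameters), the identification of tempered parameters with open parameters of Arthur type (Proposition~\ref{prop: temper=arthur+open}, i.e. Theorem~\ref{theorem-open-intro}(1)), and the well-known shape of Arthur packets attached to tempered Arthur parameters.

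\textbf{Step 1: reduce to the open case.} Since $\phi$ is tempered it has bounded image, so $\psi(w,g):=\phi(w)$ defines an Arthur parameter with trivial $\SL_2$-factor and $\phi_\psi=\phi$; hence $\phi$ is of Arthur type, and by Theorem~\ref{theorem-open-intro}(1) it is open. Because $\psi|_{\SL_2}$ is trivial we have $S_\psi=Z_{\widehat G}(\psi)=Z_{\widehat G}(\phi)=S_\phi$, so $A_\psi=A_\phi$, while $A^\ABV_{\phi_\psi}=A^\ABV_\phi$ since $\NEvs_\phi$ depends only on $\phi$. I would record here, citing \cite{CFMMX}*{Sections 4.7, 6.7 and 7.9}, that the canonical identifications presenting $A_\phi$, $A_\psi$ and $A^\ABV_\phi$ as equivariant fundamental groups are mutually compatible, so all three groups are canonically identified.

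\textbf{Step 2: match the two parametrizations through the Langlands correspondence.} Theorem~\ref{theorem:mainopen}, applied to the open parameter $\phi$, gives $\Pi^{\ABVpure}_\phi(G)=\Pi^\pure_\phi(G)$ together with commutativity of the square identifying $\NEvs_\phi\circ\mathcal{P}_{\mathfrak{w}}$ with the enhanced Langlands correspondence $J(\mathfrak{w})$ under $A_\phi\cong A^\ABV_\phi$. On the other side, among the properties of Arthur packets included in the hypothesis that Arthur's conjectures hold for $G$ is the fact that for the tempered Arthur parameter $\psi$ the pure A-packet $\Pi^\pure_\psi(G)$ equals the pure tempered L-packet $\Pi^\pure_{\phi_\psi}(G)=\Pi^\pure_\phi(G)$, with Arthur's normalized pairing $A(\mathfrak{w})$ equal to the Langlands pairing $J(\mathfrak{w})$ under $A_\psi=A_\phi$. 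Splicing the two commuting squares along the common edge $J(\mathfrak{w})$ yields
\[
\Pi^\pure_\psi(G)=\Pi^{\ABVpure}_{\phi}(G),\qquad A(\mathfrak{w})=\NEvs_{\phi}\circ\mathcal{P}_{\mathfrak{w}}
\]
under $A_\psi\cong A^\ABV_\phi$, which is the precise (stronger) form of Corollary~\ref{cor:vogantempered}. Restricting both pure packets to the representations of $G(F)$ among all its pure inner forms then gives $\Pi^{\ABV}_\phi(G)=\Pi_\psi(G)$, with the maps $A(\mathfrak{w})$ and $\NEvs_\phi\circ\mathcal{P}_{\mathfrak{w}}$ restricting correspondingly.

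\textbf{Main obstacle.} The genuine technical content is already packaged into Theorem~\ref{theorem:mainopen}; what is left is bookkeeping, and the one point needing care is consistency of the various normalizations. For the Whittaker datum this is handled by Theorem~\ref{theorem:CoW}: the map $\NEvs_\phi\circ\mathcal{P}_{\mathfrak{w}}$, the L-packet parametrization, and (by Arthur) the A-packet parametrization all vary with $\mathfrak{w}$ through the same twist by a character of $A_\phi$ pulled back from $Z(\widehat G)$, so it suffices to verify the coincidence of maps for a single choice of $\mathfrak{w}$, whereupon it holds for all of them. The only other point — compatibility of the identification $A^\ABV_\phi\cong A_\phi$ used in Theorem~\ref{theorem:mainopen} with the identification $A_\psi\cong A_\phi$ used on the Arthur side — is the one already flagged in Step~1 and is settled by the common description of these groups as equivariant fundamental groups.
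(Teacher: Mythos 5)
Your proposal is correct and follows essentially the same route as the paper's proof of Corollary~\ref{cor:vogantempered}: tempered implies open via Proposition~\ref{prop: temper=arthur+open}, Theorem~\ref{theorem:mainopen} identifies the ABV-packet with the pure L-packet and $\NEvs_\phi\circ\mathcal{P}_{\mathfrak{w}}$ with $J(\mathfrak{w})$, and Arthur's result (\cite{Arthur:book}*{Theorem 1.5.1(b)}) identifies the tempered pure A-packet with the pure L-packet and $A(\mathfrak{w})$ with $J(\mathfrak{w})$, so the two squares splice along $J(\mathfrak{w})$. Your extra remarks (direct verification that $Z_{\widehat G}(\psi)=Z_{\widehat G}(\phi)$, the restriction from pure packets to $G(F)$, and the Whittaker-independence via Theorem~\ref{theorem:CoW}) are harmless refinements of the same argument.
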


\iffalse
\todo[inline]{We prove tempered parameters are open in complete generality. Does discrete imply tempered in complete generality too? I mean, is that result proved already in the literature? I expect yes. In either case, I will change this sentence when I get an answer. -CC}
\textcolor{blue}{Sarah: What I have proven already is discrete imply open and therefore tempered imply open (in the context of classical groups, under some conditions). Discrete representations are tempered, but the notion differ when it comes to Langlands parameters, see the Desiderata of Borel p25}
\fi

\subsection{Relation to other works}

This paper is an expanded treatment of part of \cite{CDFZ:genericity}.

%We introduced the terminology ``open (Langlands) parameter'' in \cite{CFZ:cubic} and again in \cite{CDFZ:genericity}.

The connection between genericity and openness in Theorem~\ref{theorem-open-intro} (3) seems to be known for experts. In \cites{Reeder, Reedergeneric} it is conjectured that a unipotent representation is generic if and only if that its $L$-parameter is in the dense orbit and it corresponds to the trivial representation of the component group of the $L$-parameter. 
In the general case, it also appeared as a conjecture in \cite{Solleveld}*{Conjecture B}. Part (4) of Theorem \ref{theorem-open-intro} is inspired by the Enhanced Shahidi conjecture, see \cite{LLS}, which says that an Arthur packet $\Pi_\psi$ contains a generic representation if and only if $\psi$ is tempered. Modulo Vogan's conjecture on Arthur packets, Theorem \ref{theorem-open-intro} (4) is the ABV-packet counterpart and a generalization of the Enhanced Shahidi conjecture, in view of Theorem \ref{theorem-open-intro} (1).

Referring to \cite{CFZ:cubic} and \cite{CDFZ:genericity}, Solleveld conjectured that generic Langlands parameters are open \cite{Solleveld}*{Conjecture B}. 
Our work in this paper proves \cite{Solleveld}*{Conjecture B} for quasi-split classical groups and their pure inner forms.
In \cite{CDFZ:genericity} we show that this conjecture is true for any group satisfying the p-adic analogue of the Kazhdan-Lusztig hypothesis.
%In the same paper, Solleveld also assumes that discrete (resp. tempered) parameters are open; as remarked in that paper, our work verifies this assumption for quasi-split classical groups and their pure inner forms. 
%the paragraph below \cite{Solleveld-open}*{Conjecture C}. \todo[inline]{Conjecture C doesn't even say these words: open parameters, generic parameters. Someone please help clarify this paragraph. \\ -CC }

%\begin{remark}
    Proposition~\ref{prop:openL} was established in \cite{DHKM}*{Proposition 6.10} after our preprint \cite{CDFZ:genericity} was posted on arxiv.
%\end{remark}

The ABV-packets in this paper were introduced in \cite{CFMMX} \cite{ABV}, strongly influenced by \cite{Vogan:Langlands} and \cite{ABV}, hence the naming.

We wish to bring the reader's attention to a change in notation and language that we have made here compared to our earlier work. 
\begin{enumerate}
\item 
In this paper we write
$\Pi^{\ABV}_\phi(G)$ for what we have denoted by 
$\Pi^{\ABV}_\phi(G(F))$ in \cite{CFMMX} and our other earlier papers; we refer to this as an "ABV-packet".
\item 
In this paper we write 
$\Pi^{\ABVpure}_\phi(G)$ for what we have denoted by 
$\Pi^{\ABV}_\phi(G/F)$ in \cite{CFMMX} and our other earlier papers; we refer to this as a "pure ABV-packet".
\end{enumerate}
We make this change to harmonize our the notation and language with the following common usage:
\begin{enumerate}
\item 
$\Pi_\phi(G)$, is, of course, an "L-packet", and
\item 
$\Pi^{\pure}_\phi(G)$, is a "pure L-packet";
\end{enumerate}
and likewise harmonize with
\begin{enumerate}
\item 
$\Pi_\psi(G)$, which is, of course, an "A-packet", and
\item
$\Pi^{\pure}_\psi(G)$, which we refer to as a "pure A-packet".
\end{enumerate}

\subsection{Acknowledgments}

The first author thanks Julia Gordon, James Arthur and Tasho Kaletha for an invitation to the 2021 BIRS Workshop \emph{Basic Functions, Orbital Integrals, and Beyond Endoscopy} (\href{http://www.birs.ca/events/2021/5-day-workshops/21w5228}{Casselman birthday conference}), where some results from this paper were presented.
The second author would like to express her gratitude to Julia Gordon for discussing with her some aspects of the Appendix, as well as pointing out the work of Ranga Rao. 
We thank Maarten Solleveld for his interest in this work and for sharing his pre-print \cite{Solleveld} with us. 

\iffalse

\subsection{Definitions and notation}

Here we use the notation $W'_F\ceq W_F\times \SL^{\hskip-1pt\mathrm{D}}_2(\CC)$ and $W''_F\ceq W_F\times \SL^{\hskip-1pt\mathrm{D}}_2(\CC)\times \SL^{\hskip-1pt\mathrm{A}}_2(\CC)$.

\fi

\tableofcontents

\section{Background on ABV-packets}\label{sec:bacground}

Throughout this paper, unless noted otherwise, $G$ is an arbitrary connected reductive algebraic group over a $p$-adic field $F$.
We write $G^*$ for the quasi-split form of $G$.

In this section we briefly review from \cite{CFMMX} the definition of the ABV-packet $\Pi^{\ABVpure}_\phi(G)$ and the function $\Pi^{\ABVpure}_\phi(G) \to \Rep(A^\ABV_\phi)$, where $\phi$ is a Langlands parameter for $G$, paying special attention to the dependence on the local Langlands correspondence.

\subsection{Infinitesimal parameters} \label{infinitesimaldef}

Let $\Lgroup{G}=\wh G\rtimes W_F$ be the $L$-group of $G$. 
Set $W'_F\ceq W_F\times \SL_2(\C)$.
Recall that a local Langlands parameter $\phi$ is a continuous homomorphism $\phi: W'_F\to \Lgroup{G}$ such that $\phi(\Fr)$ is semi-simple, $\phi|_{\SL_2(\C)}$ is algebraic and $\phi$ commutes with $W'_F\to W_F$ and $\Lgroup{G}\to W_F$. 

Giving a local Langlands parameter $\phi$ amounts to giving a Weil-Deligne representation of $W_F$, namely, a pair $(\lambda, x)$, where $\lambda: W_F\to \Lgroup{G}$ is a group homomorphism which is continuous on $I_F$ with $\phi(\Fr)$ semi-simple, and commutes with $\Lgroup{G}\to W_F$, and $x\in \wh \fg$ is a nilpotent element such that 
$$\Ad \lambda(w)N=\abs{w}_F\, x. $$
This correspondence is given by $\phi\mapsto (\lambda_\phi, x)$, where 
\begin{equation}\label{eq-infinitesimal}
\lambda_\phi(w)\ceq \phi(w, \diag(\abs{w}_F^{1/2}, \abs{w}_F^{-1/2}))
\end{equation}
is the infinitesimal parameter of $\phi$ and $x=d\phi \left(\begin{smallmatrix}0&1\\ 0&0\end{smallmatrix}\right)$. 
For an argument of such an equivalence, see \cite{GR}*{Proposition 2.2}. 

We write $\Phi(G)$ for the set of equivalence classes of Langlands parameters for $G$ \emph{with} the relevance condition of \cite{Borel:Corvallis}*{Section 8.2 (ii)} for $G$.
Note that $\Phi(G^*)$ is now the set of equivalence classes of Langlands parameters for $G$ \emph{without} any relevance condition, since this condition is empty for quasi-split groups. 

By an {\it infinitesimal parameter} we mean a group homomorphism $\lambda: W_F\to \Lgroup{G}$ satisfying all the conditions of a Langlands parameter that is trivial on the Deligne part of the Weil-Deligne group, without imposing the relevance condition; see \cite{CFMMX}*{Section 4.1} for more precision.

\subsection{Variety of certain Langlands parameters} \label{variety}

For a fixed infinitesimal parameter, the associated ``Vogan variety'' $V_\lambda$ is  
\[
V\ceq V_\lambda:=\{x\in \widehat {\mathfrak{g}} \tq \Ad(\lambda(w))(x)=\abs{w}_F\, x, \ \forall w\in W_F\}.
\]
Note that if we fix an element $\frob \in W_F$ such that $\abs{\frob}_F=q$, then
\[
V_\lambda= \{x\in \widehat{\mathfrak{g}}^{I_F} \tq \Ad(\lambda(\frob))(x)=q x\}.
\]
We set
\[
V^* \ceq V_\lambda^*=\{y\in \widehat{\mathfrak{g}} \tq \Ad(\lambda(w))(y)=\abs{w}_F^{-1}\, y, \ \forall w\in W_F\}.
\]
We use the Killing form for  $\widehat {\mathfrak{g}}$ to define a pairing $T^*(V_\lambda) = V_\lambda \times V_\lambda^* \to \mathbb{A}^1$; this allows us to identify $V_\lambda^*$ with the dual vector space to $V_\lambda$.

Let $H_\lambda$ be the centralizer of $\lambda$ in $\dualgroup{G}$, so 
\[
H_\lambda:=\{g\in \dualgroup{G} \tq g\lambda(w)g^{-1}=\lambda(w),\ \forall w\in W_F\}.
\]
Then $H_\lambda$ acts on both $V_\lambda$ and $V_\lambda^*$ in $\dualgroup{\mathfrak{g}}$ by conjugation and both $V_\lambda$ and $V_\lambda^*$ are prehomogeneous vector spaces for this action; in particular, there are finitely many $H_\lambda$-orbits in $V_\lambda$ and $V_\lambda^*$, each with a unique open orbit and a unique closed orbit by \cite{CFMMX}*{Proposition 5.6}\\

From \cite{CFMMX}*{Proposition 4.2}, there is a bijection between the set of Langlands parameters with infinitesimal parameter $\lambda$ and the $H_\lambda$-orbits in $V_\lambda$.
Indeed, by construction, each $x\in V_\lambda$ (resp. $y\in V_\lambda^*$) uniquely determines a Langlands parameter $\phi_x$ (resp. $\phi_y$) such that $\phi_x(w,\diag(\abs{w}_F^{1/2},\abs{w}_F^{-1/2})) = \lambda(w)$ (resp. $\phi_y(w,\diag(\abs{w}_F^{-1/2},\abs{w}_F^{1/2})) = \lambda(w)$) and $\phi_x(1,e) = \exp x$ (resp. $\phi_y(1,f) = \exp y$), where $e = \left( \begin{smallmatrix} 1 & 1 \\ 0 & 1 \end{smallmatrix}\right)$ (resp. $f = \left( \begin{smallmatrix} 1 & 0 \\ 1 & 1 \end{smallmatrix}\right)$). 
We will write $x_\phi \in V_\lambda$ for the point on $V_\lambda$ corresponding to $\phi$ where $\phi(w,\diag(\abs{w}_F^{1/2},\abs{w}_F^{-1/2})) = \lambda(w)$.
The $H_\lambda$-orbit of $x_\phi\in V_\lambda$ will be denoted by $C_\phi$. 
 
For an $H_\lambda$-orbit $C$ in $V_\lambda$, pick any point $x\in C$ and denote $A_C=\pi_0(Z_{H_\lambda}(x))$, where $Z_{H_\lambda}(x)$ denotes the stabilizer of $x$ in $H_\lambda$ and $\pi_0$ denotes the component group. 
The isomorphism class of $A_C$ is independent of the choice of $x$ and is called the equivariant fundamental group of $C$. 
Since $C$ is connected, the choice of $x\in C$ determines an equivalence between the category $\Rep(A_C)$ of finite-dimensional $\ell$-adic representations of $A_C$ and the category $\Loc_H(C)$ of $H$-equivariant local systems on $C$.
For a local Langlands parameter $\phi:W'_F\to \Lgroup{G}$, denote $A_\phi=\pi_0(Z_{\dualgroup{G}}(\phi))$, which is the component group of the centralizer of $\phi$. 
A fundamental fact is $A_{C_\phi} \cong A_\phi$, see \cite{CFMMX}*{Lemma 4.4}. 

\subsection{Conormal variety to the parameter variety}

The conormal to $V_\lambda$ is
\[
\Lambda_\lambda \ceq \{ (x,y)\in V_\lambda\times V_\lambda^* \tq [x,y]=0 \},
\]
were, $[~,~]$ is the Lie bracket in $\widehat{ \mathfrak{g}}$; see \cite{CFMMX}*{Proposition 6.3.1}.
Likewise, 
\[
\Lambda_{\lambda}^* \ceq \{ (y,x)\in V_\lambda^* \times V_\lambda \tq [x,y]=0 \}
\]
may be identified with the conormal to $V_\lambda^*$.
We write $p : \Lambda \to V$ and $q : \Lambda \to V^*$ for the obvious projections and set $\Lambda_{C}\ceq p^{-1}(C)$ and $\Lambda_{D}^*\ceq q^{-1}(D)$.

Pyasetskii duality $C \mapsto C^*$ defines a bijection between the $H_\lambda$-orbits in $V_\lambda$ and the $H_\lambda$-orbits in $V_\lambda^*$ and is uniquely characterized by the following property: under the isomorphism $\Lambda_\lambda \to \Lambda^*_\lambda$ defined by $(x,y)\to (y,-x)$, the closure of $\Lambda_{C}$ in $\Lambda_\lambda$ is isomorphic to the closure of $\Lambda_{C^*}^*$ in $\Lambda^*_\lambda$.
This duality may also be characterized by passing to the regular conormal variety as follows.
Set
\[
    \Lambda_C^\mathrm{reg}=\{(x,y)\in \Lambda_C \tq y \in q(\Lambda_C)\setminus q(\bar{\Lambda}_{C'})\text{ where }C \subseteq \bar{C'} \text{ but } C \neq C' \}, 
\]
    where $q : \Lambda_\lambda \to V^*_\lambda$ is projection, and also set
$
\Lambda_\lambda^\mathrm{reg} \ceq \bigcup_C \Lambda_{C}^\mathrm{reg}
$ (this is a disjoint union, in fact).
Likewise define $\Lambda_{D}^{*,\mathrm{reg}}$ for an $H_\lambda$-orbit $D\subset V^*$. 
Then 
\[
\Lambda_{C}^\mathrm{reg} \iso \Lambda_{C^*}^{*,\mathrm{reg}}
\]
under the isomorphism $\Lambda_\lambda\iso \Lambda^*_\lambda$ given above; see \cite{CFMMX}*{Lemma 6.4.2}.
It follows that 
\[
C^* = q_\mathrm{reg}(p_\mathrm{reg}^{-1}(C)),
\]
where $p_\mathrm{reg} : \Lambda_\lambda^\mathrm{reg} \to V_\lambda$ and $q_\mathrm{reg} : \Lambda_\lambda^{*,\mathrm{reg}} \to V_\lambda^*$ are the obvious projections.

We close this Section with two simple lemmas that we will use later.

\begin{lemma}\label{lemma:dim}
For every $H_\lambda$-orbit $C$ in $V_\lambda$, $\dim \Lambda_C = \dim V_\lambda$.
\end{lemma}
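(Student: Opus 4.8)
The claim is that for every $H_\lambda$-orbit $C$ in $V_\lambda$ the conormal piece $\Lambda_C = p^{-1}(C)$ has dimension equal to $\dim V_\lambda$. The natural approach is to exploit that $\Lambda_C$ is a vector bundle over $C$: by definition $\Lambda_C = \{(x,y) \in C \times V_\lambda^* \tq [x,y] = 0\}$, and for fixed $x \in C$ the fibre over $x$ is the linear space $\{y \in V_\lambda^* \tq [x,y] = 0\}$. So I would first argue that this fibre is precisely the annihilator in $V_\lambda^*$ of the tangent space $T_x C \subseteq V_\lambda$ to the orbit at $x$. Indeed, using the Killing-form pairing $V_\lambda \times V_\lambda^* \to \mathbb{A}^1$ that identifies $V_\lambda^*$ with the dual of $V_\lambda$, the tangent space $T_x C$ is $\{[\xi, x] \tq \xi \in \h_\lambda\}$ where $\h_\lambda = \Lie H_\lambda$, and the condition $[x,y]=0$ translates, via invariance of the Killing form (i.e. $\langle [\xi,x], y\rangle = -\langle x, [\xi,y]\rangle = \langle [x,y],\xi\rangle$ up to sign), into $\langle T_x C, y\rangle = 0$. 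Hence the fibre of $p : \Lambda_C \to C$ over $x$ is canonically $(T_x C)^\perp$, of dimension $\dim V_\lambda - \dim T_x C = \dim V_\lambda - \dim C$.

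Granting that, the conclusion is immediate: $\Lambda_C$ fibres over the smooth irreducible variety $C$ with fibres of constant dimension $\dim V_\lambda - \dim C$, so
\[
\dim \Lambda_C = \dim C + (\dim V_\lambda - \dim C) = \dim V_\lambda.
\]
Alternatively, and perhaps more in the spirit of the references already invoked, one can cite that $\Lambda_\lambda$ is the conormal variety to the stratification of $V_\lambda$ by $H_\lambda$-orbits (this is \cite{CFMMX}*{Proposition 6.3.1}), and each component $\overline{\Lambda_C}$ of a conormal variety in the cotangent bundle $T^*(V_\lambda) = V_\lambda \times V_\lambda^*$ of a smooth variety is Lagrangian, hence of dimension exactly $\dim V_\lambda = \tfrac{1}{2}\dim T^*(V_\lambda)$; since $\Lambda_C$ is open and dense in $\overline{\Lambda_C}$, it has the same dimension.

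**Main obstacle.** The only real point requiring care is the identification of the fibre of $p$ over $x \in C$ with $(T_x C)^\perp$, i.e. verifying that the "numerical" conormality $\{y \tq [x,y]=0\}$ literally agrees with the "geometric" conormal space to the orbit through $x$. This rests on the $H_\lambda$-invariance of the Killing form restricted to the relevant subspaces of $\widehat{\mathfrak g}$ and on the fact that $T_x C$ is spanned by the brackets $[\xi, x]$ for $\xi \in \h_\lambda$; both are standard for orbits of a reductive group acting linearly on a representation obtained by a weight condition, and indeed this is exactly the setup of \cite{CFMMX}*{Section 6}. I would simply invoke that discussion rather than re-derive it. With that in hand the dimension count is purely formal, so I expect the whole proof to be just a few lines.
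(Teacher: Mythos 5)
Your proposal is correct and takes essentially the same approach as the paper: both identify the fibre of $\Lambda_C \to C$ over a point $x$ with the annihilator of $T_x(C)$ (the paper cites \cite{CFMMX}*{Proposition 6.3.1} for the conormal-bundle identification, which you also invoke rather than re-derive) and then conclude $\dim \Lambda_C = \dim C + \codim C = \dim V_\lambda$. Your alternative remark that the conormal variety is Lagrangian in $T^*(V_\lambda)$ is a fine shortcut but not needed.
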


\begin{proof}
In \cite{CFMMX}*{Proposition 6.3.1} we show that $\Lambda_{C}$, as defined above, is the conormal bundle to $V_\lambda$ above $C$. 
Recall that the cotangent bundle is $V_\lambda\times V_\lambda^*$ so that the conormal bundle to $C\subset V_\lambda$ at a point $x\in C$ is given by $\Lambda_{C,x} = T^*_{C,x}(V_\lambda)$  is given by $\{ y\in T^*_x(V_\lambda)  \tq y(x') =0, \ \forall x'\in T_x(C)\}$. Observe also that $\dim \Lambda_C = \dim C + \dim \Lambda_{C,x}$ for any $x\in C$. Since $\dim T_x(C) = \dim C$ and $\dim T^*_x(V_\lambda) = \dim V^*_\lambda  = \dim V_\lambda$, it follows that $\dim \Lambda_{C,x} = \codim C$. Therefore, $\dim \Lambda_C = \dim C +  \codim C = \dim V_\lambda$.
\end{proof}

\begin{lemma}\label{lemma:open}
The dual to the closed orbit $C_0$ in $V_\lambda$ is the open orbit in $V_\lambda^*$ and the dual to the open orbit $C^o$ in $V_\lambda$ is the closed orbit in $V_\lambda^*$.
\end{lemma}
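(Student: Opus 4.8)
The statement is that Pyasetskii duality $C \mapsto C^*$ sends the closed orbit $C_0 \subset V_\lambda$ to the open orbit in $V_\lambda^*$, and the open orbit $C^o \subset V_\lambda$ to the closed orbit in $V_\lambda^*$. Since Pyasetskii duality is a bijection between $H_\lambda$-orbits in $V_\lambda$ and $H_\lambda$-orbits in $V_\lambda^*$, and since each of $V_\lambda$, $V_\lambda^*$ has a unique open and a unique closed orbit (by \cite{CFMMX}*{Proposition 5.6}), it suffices to prove the single assertion that the dual of the closed orbit $C_0$ is the open orbit in $V_\lambda^*$; the second assertion then follows by applying the same statement with the roles of $V_\lambda$ and $V_\lambda^*$ interchanged (duality is an involution up to the sign isomorphism $(x,y)\mapsto(y,-x)$).

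The plan is to use the dimension count from Lemma \ref{lemma:dim} together with the characterization $C^* = q_\mathrm{reg}(p_\mathrm{reg}^{-1}(C))$. Take $C = C_0$ the closed orbit, so $C_0 = \{0\}$ (the closed orbit of a prehomogeneous vector space under a linear action containing no nonzero fixed vector is the origin; alternatively $C_0$ is a single point since it is a closed orbit in the nilpotent-type cone $V_\lambda$). Then $p^{-1}(C_0) = \{0\} \times V_\lambda^*$, since the condition $[x,y]=0$ is vacuous when $x=0$; hence $\Lambda_{C_0}$ has dimension $\dim V_\lambda^* = \dim V_\lambda$, consistent with Lemma \ref{lemma:dim}. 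Now I would identify the regular part: $\Lambda_{C_0}^\mathrm{reg}$ consists of pairs $(0,y)$ with $y \in q(\Lambda_{C_0}) \setminus q(\bar\Lambda_{C'})$ for every $C'$ with $C_0 \subsetneq \bar{C'}$. Since $C_0 = \{0\}$ lies in the closure of every orbit, this removes from $V_\lambda^*$ the image $q(\bar\Lambda_{C'})$ for all nonzero orbits $C'$. The key point is that $\bigcup_{C'\neq C_0} q(\bar\Lambda_{C'})$ is a proper closed subset of $V_\lambda^*$: indeed $q(\bar\Lambda_{C'}) = \overline{q(\Lambda_{C'})}$ is closed, and each $\Lambda_{C'}$ has dimension $\dim V_\lambda^*$ (Lemma \ref{lemma:dim}) but for $C' \neq C_0$ one has $q(\Lambda_{C'}) = \overline{(C')^*}$ a proper closed subvariety — more directly, $q(\Lambda_{C_0}) = V_\lambda^*$ is everything, while for any other orbit the fibre over a generic point of $V_\lambda^*$ cannot meet $\Lambda_{C'}$. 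Therefore $q_\mathrm{reg}(p_\mathrm{reg}^{-1}(C_0))$ is the complement in $V_\lambda^*$ of a proper closed subset, hence is dense; since it is also a single $H_\lambda$-orbit $C_0^*$ (duality outputs an orbit), a dense orbit is the open orbit. Thus $C_0^* = C^{o,*}_{V^*}$, the open orbit in $V_\lambda^*$.

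The main obstacle I anticipate is making rigorous the claim that $\bigcup_{C' \neq C_0} q(\bar\Lambda_{C'})$ is a proper subset of $V_\lambda^*$ — equivalently, that a generic $y \in V_\lambda^*$ fails to be orthogonal to (i.e. fails to commute with, after the Killing-form identification) any $x$ lying in a nonzero orbit with $(x,y) \in \Lambda_{C'}^\mathrm{reg}$. This is really a statement that the generic fibre of $q$ restricted to $\Lambda_\lambda$ already "sees" the $x=0$ locus, which should follow from the prehomogeneity of $V_\lambda^*$ (a generic $y$ has open orbit, and the pairing/bracket constraints then force the regular conormal direction over it to come from $C_0$) combined with the dimension bookkeeping $\dim\Lambda_{C'} = \dim V_\lambda = \dim V_\lambda^*$ which says $q|_{\Lambda_{C'}}$ is generically finite onto its image, so its image is all of $V_\lambda^*$ only when... here one must argue it is all of $V_\lambda^*$ for exactly one orbit, and by the count that orbit is the one whose conormal is a full "vertical" space, namely $C_0$. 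An alternative, cleaner route that sidesteps this: invoke the defining characterization of Pyasetskii duality directly — $\overline{\Lambda_{C_0}} \cong \overline{\Lambda^*_{C_0^*}}$ under $(x,y)\mapsto(y,-x)$ — and observe $\overline{\Lambda_{C_0}} = \{0\}\times V_\lambda^*$ has dimension $\dim V_\lambda^*$ and maps onto all of $V_\lambda^*$ under $q$, so $\overline{\Lambda^*_{C_0^*}}$ maps onto all of $V_\lambda^*$ under the first projection $V_\lambda^* \times V_\lambda \to V_\lambda^*$; since $\overline{\Lambda^*_{C_0^*}}$ is irreducible of dimension $\dim V_\lambda^*$ and dominates $V_\lambda^*$, the orbit $C_0^*$ must be open in $V_\lambda^*$. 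I would present this second argument as the clean proof and relegate the first to a remark if space allows.
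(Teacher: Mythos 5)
Your designated ``clean'' argument (the second one) is correct, and it takes a genuinely different route from the paper's proof. You work directly from the defining characterization of Pyasetskii duality: since the closed orbit is $C_0=\{0\}$, you compute $\Lambda_{C_0}=\{0\}\times V_\lambda^*$ explicitly, push it through the isomorphism $(x,y)\mapsto(y,-x)$ to get $\overline{\Lambda^*_{C_0^*}}=V_\lambda^*\times\{0\}$, and conclude that $\overline{C_0^*}=V_\lambda^*$, so $C_0^*$ is the (unique) open orbit; the second claim then follows because duality is an involution. The paper instead combines Lemma~\ref{lemma:dim} with the inclusion $\Lambda^{\mathrm{reg}}_C\subseteq C\times C^*$ of \cite{CFMMX}*{Proposition 6.4.3} to obtain the general inequality $\dim C^*\geq \codim C$, and then specializes to $C_0$. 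Your route is more elementary for this particular statement (it needs neither Lemma~\ref{lemma:dim} nor the regular conormal variety), while the paper's route yields the inequality $\dim C^*\geq\codim C$ for every orbit as a by-product. Two small points: you were right to demote your first sketch, since the claim that $\bigcup_{C'\neq C_0} q(\bar\Lambda_{C'})$ is proper in $V_\lambda^*$ is essentially equivalent to the statement being proved (only the component dual to the open orbit of $V_\lambda^*$ dominates $V_\lambda^*$), so as written it is circular; and the cleanest justification that $C_0=\{0\}$ is simply that $\{0\}$ is a closed $H_\lambda$-orbit and the closed orbit is unique by \cite{CFMMX}*{Proposition 5.6}, rather than the ``no nonzero fixed vector'' reasoning you sketch.
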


\begin{proof}
First note that $\Lambda_\lambda^\mathrm{reg}$ is open in $\Lambda_\lambda$.
By \cite{CFMMX}*{Proposition 6.4.3}, $\Lambda^\mathrm{reg}_{C} \subseteq C\times C^*$ for every $H_\lambda$-orbit $C\subset V_\lambda$. Then $\dim \Lambda_{C} \leq \dim C + \dim C^*$. It now follows from Lemma~\ref{lemma:dim} that $\dim C^* \geq \codim C$. 
Now take $C= C_0$, the closed orbit in $V_\lambda$, for which $\codim C_0 = \dim V_\lambda$.
Then  $\dim C_0^* \geq \codim C_0 = \dim V_\lambda$. Since $C_0^*\subset V_\lambda^*$, it follows that $\dim C_0^* = \dim V^*_\lambda$, so $C_0^*$ an the open $H_\lambda$-orbit in $V_\lambda$, which is unique since $V_\lambda$ is a prehomogeneous vector space.
Since orbit duality is an involution, it follows that the dual to the open orbit $C^o$ in $V_\lambda$ is the closed/trivial orbit in $V^*_\lambda$.
\end{proof}

\subsection{Vanishing cycles}\label{sec:VC}

In  \cite{CFMMX}*{Section 7.10} we define a functor
\[
\NEvs_{C}: \Perv_{H_{\lambda}}(V_\lambda)\to \Loc_{H_{\lambda}}(\Lambda_{C}^{\mathrm{gen}}),
\]
for every $H_\lambda$-orbit $C$ in $V$, where  $\Lambda^\mathrm{gen}_{C}\subseteq \Lambda_\lambda$ is a connected $H_\lambda$-stable open subset defined in \cite{CFMMX}*{Section 7.9}.
Since $\Lambda^\mathrm{gen}_{C}$ is connected we may pick a base point $(x,y)\in \Lambda^\mathrm{gen}_{C}$ and set 
\[
A^\ABV_{C}\ceq \pi^{H_\lambda}_1(\Lambda^\mathrm{gen}_{C},(x,y)),
\]
the $H_\lambda$-equivariant fundamental group of $\Lambda^\mathrm{gen}_{C}$, which allows us to rewrite $\NEvs_{C}$ as a functor landing in the category $\Rep(A^\ABV_{C})$ of finite-dimensional $\ell$-adic representations of $A^\ABV_{C}$, thus defining 
\[
\NEvs_{\phi}: \Perv_{H_{\lambda}}(V_\lambda)\to \Rep\left(A^\ABV_{\phi}\right).
\]
Note the distinction between $\NEvs_{C}$ and $\NEvs_\phi$ for $x_\phi \in C$; a similar  distinction is made in \cite{CR} and \cite{CR2}.
\iffalse
Since the connected components of $\Lambda_\lambda^\mathrm{gen}$ are the $\Lambda_{C}^\mathrm{gen}$ as $C$ ranges over $H_\lambda$-orbits in $V_\lambda$, we may assemble the local systems on these components into a local system on $\Lambda_{\lambda}^\mathrm{gen}$ and thus define $\NEvs_{\lambda}: \Perv_{H_{\lambda}}(V_\lambda)\to \Loc_{H_{\lambda}}(\Lambda_{\lambda}^{\mathrm{gen}})$, or, as a functor landing in $\Rep(A^\ABV_\lambda)$ where $A^\ABV_\lambda \ceq \coprod_{C} A^\ABV_{C}$.

We shall need to consider the categories $\Rep_\lambda(G^\delta(F))$ as $\delta$ ranges over pure inner forms for $G$ over $F$, so we further define 
\[
\Rep_\lambda(G/F) \ceq  \mathop{\oplus}\limits_{[\delta]\in H^1(F,G)} \Rep_\lambda(G^\delta(F)),
\]
where the sum is actually taken over a set of representatives for $H^1(F,G)$.
Let $\phi_i$ be a set of representatives of $\dualgroup{G}$-conjugacy classes of Langlands parameters of $G(F)$ and its pure inner forms with infinitesimal parameter $\lambda$, and let $\Pi_{\phi_i}^{\pure}(G)$ be the Vogan $L$-packet of  $G(F)$ and its pure inner forms. Then 
 \begin{equation}\label{eq: simple objects of rep}
 \Rep_{\lambda}(G/F)^{\mathrm{simple}}_{/\mathrm{iso}}=\coprod_{i}\Pi_{\phi_i}^{\pure}(G).\end{equation}
\fi

\subsection{Whittaker data}\label{ssec:LLC}

%Let $F$ be a non-archimedean local field and $G$ be a quasi-split reductive group over $F$. 
Let $F$ be a non-archimedean local field and $G$ be a connected quasi-split reductive group over $F$. 
Our exposition mainly follows \cite{GGP}*{Section 9} and \cite{Kaletha}*{Section 4}.

 Let $B$ be a Borel subgroup of $G$ with unipotent radical $U$. The torus $T=B/U$ acts on $\Hom(U,\C^\times)$. Recall the following basic definition.
 %\todo[inline]{Wait, so $G$ should be quasi-split?}
 %\begin{definition}[Generic character and generic representation]\label{definition:generic representation}
   A character $\theta: U(F)\to \C^\times$ is called \emph{generic character} if its stabilizer in $T(F)$ is the centre $Z(F)$ of $G(F)$.  
For a generic character $\theta$ of $U(F)$, a representation $\pi$ of $G(F)$ is called $\theta$-generic if $\Hom_{U(F)}(\pi,\theta)\ne 0$. 
This only depends on the $T(F)$-orbit of $\theta$. 
If $\theta$ is understood from the context, we simply say $\pi$ is generic.
%\end{definition}

The set $D$ of $T(F)$-orbits on the generic characters forms a principal homogeneous space for the abelian group 
$T_{ad}(F)/\Im(T(F))$,
where $T_{ad}(F)$ is the corresponding maximal torus of the adjoint group $G_{ad}(F)$. A \emph{Whittaker datum} $\mathfrak{w}$ for $G$ is a $G(F)$-conjugacy classes of pairs $(B,\theta)$, where $B$ is a Borel subgroup of $G$ and $\theta$ is a generic character of $U(F)$, where $U$ is the unipotent radical of $B$. 
Such a pair is a principal homogeneous space for the abelian group $G_{ad}(F)/\Im(G(F))$. 

It is known that 
\[
T_{ad}(F)/\Im(T(F)) \iso G_{ad}(F)/\Im(G(F)).
\]
Thus if we fix a Borel subgroup $B$, giving a Whittaker datum $(B',\theta')$ is equivalent to giving an element in $D$.
This means there are two ways to define Whittaker datum: one is using elements in $D$ by fixing a Borel, which is a principal homogeneous space for $T_{ad}(F)/\Im(T(F))$; and the other one is using the $G(F)$-conjugacy classes of pairs $(B, \theta)$, which is a principal homogeneous space for the group $G_{ad}(F)/\Im(G(F))$. 
The first one is easy to compute and the second one is more intrinsic because it does not need to fix a Borel in advance. 
The isomorphism $T_{ad}(F)/\Im(T(F)) 
\iso
G_{ad}(F)/\Im(G(F))$ says that the two approaches are the same.

\subsection{Change of Whittaker data}\label{sec:CoWh}

Let $\wh G_{sc}$ be the simply connected cover of $\wh G_{ad}$. 
Let $\wh Z$ (resp. $\wh Z_{sc}$) be the center of $\wh G$ (resp. $\wh G_{sc}$). 
For a finite abelian group $A$, denote by $A^D$ the group of characters of $A$. 
By \cite{Kaletha}*{Lemma 4.1}, see also \cite{Borel:Corvallis}*{Section 10}, there exists a canonical bijection
\begin{equation}\label{eq: Kaletha 4.1}
G_{ad}(F)/G(F) \to (\ker(H^1(W_F, \wh Z_{sc})\to H^1(W_F, \wh Z)))^D.
\end{equation}
Giving two Whittaker data $\mathfrak{w}$ and $\mathfrak{w}'$, let 
\[
(\mathfrak{w},\mathfrak{w}')\in G_{ad}(F)/G(F)
\]
 be the element which conjugates $\mathfrak{w} $ to $\mathfrak{w}'$. 
 By the above bijection \eqref{eq: Kaletha 4.1}, the pair $(\mathfrak{w},\mathfrak{w}')$  determines a character
\begin{equation}
\ker(H^1(W_F, \wh Z_{sc})\to H^1(W_F, \wh Z)) \to \CC^\times,
\end{equation}
also denoted by $(\mathfrak{w},\mathfrak{w}')$.

Every Langlands parameter $\phi: W_F\times \SL_2(\C)\to \Lgroup{G}=\wh G\rtimes W_F$ defines an action of $W_F$ on $\wh G$ by $w.g=\phi(w,1)\, g\, \phi(w,1)^{-1}$. 
This action of $W_F$ on $\wh G$ induces an action on $\wh G_{sc}$ and restricts to an action on $\wh G_{ad}$. 
The action of $W_F$ on $\wh Z$ (resp. on $\wh Z_{sc}$) induced by any $\phi$ is compatible with the natural action of $W_F$ on $\wh Z$ (resp. $\wh Z_{sc}$), namely the action when we define the $L$-group $\Lgroup{G}=\wh G\rtimes W_F$, see \cite{Borel:Corvallis}. 
Now consider the composition 
$$f: H^0_\phi(W_F, \wh G)\to H^0_\phi(W_F, \wh G_{ad})\to H^1(W_F, \wh Z_{sc}),$$
where $H^0_\phi(W_F, \wh G)$ denotes the elements in $\wh G$ invariant under the $W_F$-action induced by $\phi$, the first map comes from the long exact sequence associated with the short exact sequence 
$$1\to Z\to \wh G\to \wh G_{ad}\to 1,$$
and the second map comes from the long exact sequence associated with the short exact sequence
$$1\to \wh Z_{sc}\to \wh G_{sc}\to \wh G_{ad}\to 1.$$
Moreover, the composition of $H^1(W_F, \wh Z_{sc})\to H^1(W_F, \wh Z)$ with $f$ vanishes because it is the composition
$$H^0_\phi(W_F, \wh G)\to H^0_\phi(W_F, \wh G_{ad})\to H^1(W_F, \wh Z),$$
which is part of the long exact sequence associated with the short exact sequence 
$$1\to \wh Z\to \wh G\to \wh G_{ad}\to 1.$$
Thus $f$ induces a homomorphism 
\begin{equation}
H^0_\phi(W_F, \wh G)\to H^0_\phi(W_F, \wh G_{ad})\to \ker( H^1(W_F, \wh Z_{sc})\to H^1(W_F, \wh Z)).
\end{equation}
This character is trivial on $Z(\wh G)^{W_F}$. See \cite{Kaletha}*{Section 4} for more details.
Note that 
\[
H^0_\phi(W_F, \wh G) = \wpair{g\in \wh G \tq  \phi(w,1)g=g\phi(w,1), \forall w\in W_F}
\]
which contains
\[
Z_{\wh G}(\phi) \ = \wpair{g\in \wh G\tq \phi(w,x)g=g\phi(w,x), \forall w\in W_F, \forall x\in \SL_2(\CC)}.
\]
The composition
\begin{equation}\label{eqn:CoW}
\begin{tikzcd}
Z_{\wh G}(\phi) \arrow[>->]{r} & H^0_\phi(W_F, \wh G) \arrow{r}{f} & \ker\left(H^1(W_F, \wh Z_{sc}) \to H^1(W_F, \wh Z)\right) \arrow{r}{(\mathfrak{w},\mathfrak{w}')} & \CC^\times
\end{tikzcd}
\end{equation}
is trivial on $Z_{\wh G}(\phi)^\circ$ and therefore determines a character of the component group $A_\phi = \pi_0(Z_{\wh G}(\phi))$. 
We denote this character by 
\begin{equation}\label{eqn:twisting}
\varrho_{\mathfrak w, \mathfrak w'}^\phi: A_\phi\to \C^\times.
\end{equation}

\begin{lemma}\label{lemma:PIFvarrho}
    The image of $\varrho^\phi_{\mathfrak{w},\mathfrak{w}'}$ under $\widehat{A_\phi} \to H^1(F,G^*)$ is trivial.
\end{lemma}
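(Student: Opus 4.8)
The plan is to unwind the definition of the map $\widehat{A_\phi}\to H^1(F,G^*)$ and reduce the assertion to the fact, already noted just above the statement, that the connecting homomorphism $f$ kills $Z(\widehat{G})^{W_F}$.

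First I would recall that the map $\widehat{A_\phi}\to H^1(F,G^*)$ factors as
\[
\widehat{A_\phi}\longrightarrow \pi_0\!\big(Z(\widehat{G})^{W_F}\big)^D \ \iso\ H^1(F,G^*),
\]
where the isomorphism is Kottwitz's, using the identification $\widehat{G^*}\cong\widehat{G}$ of dual groups together with their $W_F$-actions (valid since $G$ and $G^*$ are inner forms of one another), and the first arrow is restriction of characters along the homomorphism $\pi_0\big(Z(\widehat{G})^{W_F}\big)\to A_\phi$ induced by the inclusion $Z(\widehat{G})^{W_F}\subseteq Z_{\widehat{G}}(\phi)$ (which factors through $\pi_0$ because $A_\phi$ is finite). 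Consequently, the image of $\varrho^\phi_{\mathfrak{w},\mathfrak{w}'}$ in $H^1(F,G^*)$ is trivial if and only if the restriction of $\varrho^\phi_{\mathfrak{w},\mathfrak{w}'}$ to the image of $Z(\widehat{G})^{W_F}$ in $A_\phi$ is trivial.

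Next I would unwind the definition \eqref{eqn:CoW}--\eqref{eqn:twisting}: $\varrho^\phi_{\mathfrak{w},\mathfrak{w}'}$ is the character of $A_\phi=\pi_0(Z_{\widehat G}(\phi))$ obtained from the composite
\[
Z_{\widehat{G}}(\phi)\hookrightarrow H^0_\phi(W_F,\widehat{G})\xrightarrow{\ f\ }\ker\!\big(H^1(W_F,\widehat{Z}_{sc})\to H^1(W_F,\widehat{Z})\big)\xrightarrow{\ (\mathfrak{w},\mathfrak{w}')\ }\CC^\times.
\]
Since $Z(\widehat{G})=\ker(\widehat{G}\to\widehat{G}_{ad})$ and $f$ factors through $H^0_\phi(W_F,\widehat{G}_{ad})$, the subgroup $Z(\widehat{G})^{W_F}$ is already sent to the identity by $f$; hence the whole composite is trivial on $Z(\widehat{G})^{W_F}$, so $\varrho^\phi_{\mathfrak{w},\mathfrak{w}'}$ restricts trivially to the image of $Z(\widehat{G})^{W_F}$ in $A_\phi$. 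Combining this with the previous paragraph gives the lemma.

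I do not anticipate a genuine obstacle here: the only point demanding care is the bookkeeping of the first paragraph, namely confirming that the map $\widehat{A_\phi}\to H^1(F,G^*)$ is indeed ``restrict to the center, then apply Kottwitz'' and that $\widehat{G^*}$ may be identified with $\widehat{G}$ compatibly with the $W_F$-actions. Once that is in place the argument is purely formal, resting entirely on the factorization of $f$ through the adjoint dual group.
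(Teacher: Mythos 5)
Your argument is correct and is essentially the paper's: the paper's one-sentence proof rests on the remark immediately preceding the lemma that the composite \eqref{eqn:CoW} factors through $\wh G_{ad}$ and is therefore trivial on $Z(\wh G)^{W_F}$, which is exactly the fact you invoke, combined with the Kottwitz description of $\widehat{A_\phi} \to H^1(F,G^*)$ as restriction to the image of $\pi_0(Z(\wh G)^{W_F})$ in $A_\phi$. Your write-up simply makes this bookkeeping explicit, whereas the paper states the conclusion directly.
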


\begin{proof}
    Because the character in Equation~\ref{eqn:CoW} is trivial on $Z_{\wh G}(\phi)^\circ$, the pure inner form attached to the character $\varrho_{\mathfrak w, \mathfrak w'}^\phi$ of $A_\phi$ is the quasi-split form $G^*$ of $G$.
\end{proof}

In the special case when $\phi$ is a closed Langlands parameter, we may identify $\phi$ with its infinitesimal parameter $\lambda$. 
Now $\varrho^\phi_{\mathfrak{w},\mathfrak{w}'}$ is a character of $A_\lambda = H_\lambda/H_\lambda^\circ = \pi_0(H_\lambda)$ denoted by
\begin{equation}\label{eqn:twistinglambda}
\varrho^\lambda_{\mathfrak w, \mathfrak w'}: A_\lambda \to \C^\times.
\end{equation}
This case is universal for all Langlands parameters with infinitesimal parameter $\lambda$ in the following sense.
Since the image of $\lambda$ is contained in the image of $\phi$, we have a containment $Z_{\dualgroup{G}}(\phi) \subseteq Z_{\dualgroup{G}}(\lambda)$ which induces
\[
\iota_{\lambda \leq \phi} : A_\phi \to A_\lambda
\]
Then, by construction, 
\[
\varrho^\phi_{\mathfrak w, \mathfrak w'} = \varrho^\lambda_{\mathfrak w, \mathfrak w'} \circ \iota_{\lambda \leq \phi}.
\]

\subsection{Local Langlands correspondence}\label{sec:desiderata}

In this section we briefly review certain basic desiderata of the local Langlands correspondence in a form proposed by Vogan \cite{Vogan:Langlands}.

For $\delta\in H^1(F,G)$, let $G_\delta$ be the pure inner form of $G$ associated with $\delta$. 
Let $\Pi(G_\delta)$ be the set of equivalence classes of irreducible smooth representations of $G_\delta(F)$ and set 
\[
\Pi^{\pure}(G)\ceq\coprod_{\delta\in H^1(F,G)}\Pi(G_\delta).
\]

 We now describe the desiderata of the local Langlands corresponds which is relevant for our purpose. 
\begin{enumerate}
\item\label{desi1} 
There is a finite to one surjective map 
\[
\Pi^{\pure}(G)\to \Phi(G^*).
\]
For each $\phi\in \Phi(G^*)$, let 
\[
\Pi_\phi^{\pure}(G)\subset \Pi^{\pure}(G)
\]
be the pre-image of $\phi$, which is the pure $L$-packet of $\phi$. 
%Moreover, the cardinality of $\Pi_\phi(G/F)$ is $|\wh A_\phi|$, where $A_\phi=\pi_0(Z_{\dualgroup{G}}(\phi)).$

\item\label{desi2} 
Set $A_\phi\ceq \pi_0(Z_{\dualgroup{G}}(\phi))$.
For each Whittaker datum $\mathfrak{w}\in D$, there is a bijection 
\[
J(\mathfrak{w}): \Pi_\phi^{\pure}(G)\to \wh{A_\phi},
\]
which we refer to as a \emph{Whittaker normalization}.
Following \cite{Kaletha}, we denote the inverse of $J(\mathfrak{w})$ by $\iota_{\mathfrak{w}}$.
Each pure L-packet $\Pi^\pure_\phi(G)$ contains at most one $\theta'$-generic representation for each $\mathfrak{w}' = (B',\theta')\in D$. 
If $\Pi^\pure_\phi(G)$ contains a $\theta$-generic representation $\pi$, then $J(\mathfrak{w})(\pi)$ is the trivial representation of $A_\phi$.

\item\label{wittwisting} 
If $\mathfrak{w}'\in D$ is another Whittaker datum, then for any $\phi\in \Phi(G^*)$ and $\rho\in \wh A_{\phi}$, we have 
%\iota_{\mathfrak{w}'}(\rho)=\iota_{\mathfrak{w}}(\rho\otimes \left(\varrho_{\mathfrak{w},\mathfrak{w'}}^\phi\right)^{-1}),$$
\[
\iota_{\mathfrak{w}}(\rho)
=
\iota_{\mathfrak{w}'}(\rho\otimes \varrho_{\mathfrak{w},\mathfrak{w'}}^\phi)
\]
where $\varrho_{\mathfrak{w},\mathfrak{w}'}^\phi: A_\phi\to \C^\times$ is the character of $A_\phi$ described above in Equation~\eqref{eqn:twisting}, also in \cite{Kaletha}*{Section 4}. 

\iffalse
\item\label{desi4}
All of the irreducible representations $\pi$ of $\Pi_\phi(G)$ have the same central character, which is denoted by $\omega_\phi$. Gross and Reeder gives a recipe to determine the central character $\omega_\phi$ in \cite{GR}.
\todo{I don't think we say anything in our main results about central characters. Should we remove mention of them? Or expand and clarify these points?}
\fi
\end{enumerate}

\begin{remark}
%In the above, the notation $J(\theta)$ is taken from \cite{GGP}*{\S 9}, where a Whittaker datum is described using an element $\theta$ of $D$; while $\iota_{\mathfrak{w}}$ is taken from \cite{Kaletha}, where a Whittaker datum $\mathfrak{w}$ denotes a $G(F)$-conjugacy classes of a pair $(B,\theta)$ with a Borel $B$ and a generic character $\theta$ of $B$.  
For classical groups, Desiderata~(\ref{desi1}) and (\ref{desi2}) above were proved in \cites{Arthur: book, Mok:Unitary, KMSW:Unitary}. Part (\ref{wittwisting}) of the above desiderata for tempered parameters of symplectic groups and special orthogonal  was proved in \cite{Kaletha}.
\iffalse
 For classical groups, the central character $\omega_\phi$ in (\ref{desi4}) above was explicitly described in \cite{GGP}*{\S 10}.
\fi
\end{remark}

\subsection{ABV-packets}\label{sec:ABV}

In this section we recall the definition of the ABV-packet attached to a Langlands parameter $\phi$ for $G$ from \cite{CFMMX}, while expanding on the dependence on the Whittaker datum $\mathfrak{w}$ through the local Langlands correspondence; see Definition~\ref{definition:ABVw}.

Fix a Whittaker datum $ \mathfrak{w}$ for $G$. 
In Section~\ref{ssec:LLC}, Desiderata (\ref{desi1}) we saw that the local Langlands correspondence predicts a bijection
  \begin{equation}\label{eq: LLC}
J(\mathfrak{w})=  \iota_{\mathfrak{w}}^{-1}: \Pi_{\phi_i}^{\pure}(G)\to \widehat {A_{\phi_i}}\cong \widehat{A_{C_i}},\end{equation}
  where $C_i=C_{\phi_i}$. 
%Under this bijection, an element $\pi\in \Pi_{\phi_i}^{\pure}(G)$ corresponding to $\rho\in \widehat{A_{C_i}}$ will be denoted by $\pi(\phi_i,\rho)$ if the choice of Whittaker datum is understood and by $\pi_\mathfrak{w}(\phi_i,\rho)$ if the datum $\mathfrak{w}$ needs to be specified.
Define
\[
\Pi^\pure_\lambda(G) \ceq \coprod_{\phi_\lambda =\lambda} \Pi^\pure_{\phi}(G)
\]
where the sum is taken over $\dualgroup{G}$-conjugacy classes of Langlands parameters $\phi$ with infinitesimal parameter $\lambda$.
Using \eqref{eq: LLC} we get a bijection
\begin{equation}\label{llc:map}
\begin{array}{rcl}
\mathcal{P}_{\mathfrak{w}}: \Pi^\pure_\lambda(G) &\to&  \Perv_{H_{\lambda}}(V_\lambda)^{\mathrm{simple}}_{/\mathrm{iso}} %= \Perv_{\dualgroup{G}}(X_\lambda)^{\mathrm{simple}}_{/\mathrm{iso}}
\\
\pi &\mapsto& \IC(\underline{\rho}_{C_\phi})
\end{array}
\end{equation}
where $(\phi,\rho) = J(\mathfrak{w})(\pi)$ is the enhanced Langlands parameter for $\pi$ (so $\phi$ is a Langlands parameter for $\pi$) and $\rho$ is an irreducible representation of $A_\phi$ with pure inner form  $\delta$, and $\underline{\rho}_{C_\phi}$ is the corresponding $H_\lambda$-equivariant local system on $C_\phi$.

\iffalse
\todo[inline]{I don't think it's me who wrote this. But I guess this should be the $\kappa(\tau)$ thing. It comes from the Kottwitz isomorphism $H^1(F, G)=\Hom(\pi_0(Z(\wh G)^{\Gamma_F}), \C^\times).$ Thus $\delta$ determines a character $\chi_\delta: \pi_0(Z(\wh G)^{\Gamma_F})\to \C^\times$. This notation is used in page 17 of the memo book. While $A_\phi=\pi_0(Z_{\wh G}(\phi))$. LLC should say that $\Irrep(G_\delta)\cong \wpair{(\phi, \rho): \phi\in \Phi(G_\delta), \rho\in \wh A_{\phi}, \rho(za)=\chi_\delta(z)\rho(a), \forall z\in \pi_0((Z_{\wh G})^{\Gamma_F})}$. In particular, if $G_\delta$ is quasi-split, $\rho$ is indeed a character of $\pi_0(Z_{\wh G}(\phi))/\pi_0((Z_{\wh G})^{\Gamma_F}) $. This should be explained somewhere in your book and I cannot locate it at this moment. It seems that Proposition 3.5, page 21 is an Arthur version of this. QZ}
\fi

\begin{definition}
For a local Langlands parameter $\phi$ with infinitesimal parameter $\lambda$, and a Whittaker datum $\mathfrak{w}$ as above, define
\begin{equation}\label{definition:ABVw}
\Pi_{\phi,\mathfrak{w}}^{\ABVpure}(G)
:=
\{ \pi \in  \Pi_\lambda^\pure(G) \tq  \NEvs_{C_\phi}(\mathcal{P}_{\mathfrak{w}}(\pi))\ne 0\}.
\end{equation}
This set comes equipped with the function 
\begin{equation}\label{eq: Arthur map}
\begin{array}{rcl}
\NEvs_\phi \circ\mathcal{P}_{\mathfrak{w}} : \Pi_{\phi,\mathfrak{w}}^{\ABVpure}(G) &\to&  \Rep(A^\ABV_{\phi}) \\
\pi &\mapsto&  \NEvs_\phi\left(  \mathcal{P}_{\mathfrak{w}}(\pi) \right).
\end{array}
\end{equation}
After the proof of Proposition~\ref{theorem:CoW}, we will use the notation
\[
\Pi_{\phi}^{\ABVpure}(G) \ceq \Pi_{\phi,\mathfrak{w}}^{\ABVpure}(G),
\]
because that proposition shows that $\Pi_{\phi,\mathfrak{w}}^{\ABVpure}(G)$ is independent of $\mathfrak{w}$.
\end{definition}

\iffalse
\todo[inline]{Turning this into a todo until I find the right place for this: We write $\Pi_{\phi,\mathfrak{w}}^{\ABV}(G)$ for the subset of $\Pi_{\phi,\mathfrak{w}}^{\ABVpure}(G)$ consisting of representations of $G(F)$, not including its pure inner forms. - CC}
\fi

\section{Whittaker normalization of ABV-packets}

In this section we show that the ABV-packet $\Pi^{\ABVpure}_{\phi,\mathfrak{w}}(G)$ does not depend on Whittaker normalization, while the function $\Pi^{\ABVpure}_{\phi,\mathfrak{w}}(G) \to \Rep(A^\ABV_\phi)$ does; see Theorem~\ref{theorem:CoW}.

\subsection{Compatibility of ABV-packets with L-packets}

To study the compatibility of ABV-packets with the enhanced local Langlands correspondence, we begin by recalling some basic facts about the so-called component groups that appear in both packets.
From Section~\ref{sec:desiderata}, Desiderata~(\ref{desi2}), recall $A_\phi \ceq \pi_0(Z_{\dualgroup{G}}(\phi)) = \pi_1^{H_\lambda}(C_\phi,x_\phi)$.
From Section~\ref{sec:VC}, recall $A^\ABV_{\phi} \ceq \pi_1^{H_\lambda}(\Lambda^\mathrm{gen}_{C_\phi}, (x_\phi, y))$, for any $(x_\phi,y)\in \Lambda^\mathrm{gen}_{C_\phi}$.
These $H_\lambda$-equivariant fundamental groups are related as follows.
By restriction, the projection $T^*(V_\lambda) \to V_\lambda$ defines the $H_\lambda$-equivariant projection 
\[
\Lambda^\mathrm{gen}_{C_\phi} \to C_\phi.
\]
By pull-back, this defines $\Loc_{H_\lambda}(C_\phi) \to \Loc_{H_\lambda}(\Lambda^\mathrm{gen}_{C_\phi})$. 
Since these spaces are connected, the choice of base points as above determines
\[
\Rep(A_\phi) \to \Rep(A^\ABV_\phi).
\]
The function underlying this functor appears in Theorem~\ref{theorem:compatibility}.

\begin{theorem}[\cite{CFMMX}*{Theorem 7.22 (d)}]\label{theorem:compatibility}
Let $G$ be a connected reductive algebraic group. 
Assume the Desiderata of Section~\ref{sec:desiderata} for $G$.
Let $\phi$ be any Langlands parameter for $G$, without imposing the relevance condition of \cite{Borel:Corvallis}*{Section 8.2 (ii)}.
The following diagram commutes:
\[
\begin{tikzcd}
\Pi^\pure_\phi(G) \arrow[>->]{d}  \arrow{rr}{J(\mathfrak{w})} && \Rep(A_\phi) \arrow{d}\\
\Pi^{\ABVpure}_{\phi, \mathfrak{w}}(G) \arrow{rr}{\NEvs_\phi \circ\mathcal{P}_{\mathfrak{w}}} && \Rep(A_\phi^\ABV)
\end{tikzcd}
\]
\end{theorem}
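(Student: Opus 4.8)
The statement (Theorem~\ref{theorem:compatibility}, cited from \cite{CFMMX}*{Theorem 7.22 (d)}) asserts commutativity of a square relating the enhanced local Langlands correspondence $J(\mathfrak{w})$ with the composite $\NEvs_\phi \circ \mathcal{P}_{\mathfrak{w}}$. The plan is to trace an arbitrary $\pi \in \Pi^\pure_\phi(G)$ through both paths and identify the outputs as representations of $A^\ABV_\phi$. First I would fix the enhanced parameter $(\phi,\rho) = J(\mathfrak{w})(\pi)$, so that by definition of $\mathcal{P}_{\mathfrak{w}}$ in \eqref{llc:map} we have $\mathcal{P}_{\mathfrak{w}}(\pi) = \IC(\underline{\rho}_{C_\phi})$, the intersection cohomology extension of the $H_\lambda$-equivariant local system on $C_\phi$ attached to $\rho$ under the equivalence $\Rep(A_\phi) \simeq \Loc_{H_\lambda}(C_\phi)$ determined by the base point $x_\phi$.

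The key computational input is the behaviour of the vanishing-cycles functor $\NEvs_{C_\phi}$ on $\IC$ sheaves supported on the orbit $C_\phi$ itself — that is, the "diagonal" term. On the regular part of the conormal, the functor $\NEvs_{C_\phi}$ applied to $\IC(\underline{\rho}_{C_\phi})$ should recover precisely the pullback of $\underline{\rho}_{C_\phi}$ along the projection $\Lambda^\mathrm{gen}_{C_\phi} \to C_\phi$, possibly up to a shift and Tate twist that does not affect the underlying representation; this is the content that makes the right-hand vertical map $\Rep(A_\phi) \to \Rep(A^\ABV_\phi)$ — induced by pullback of local systems along $\Lambda^\mathrm{gen}_{C_\phi} \to C_\phi$ and the chosen base points — the correct comparison. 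I would invoke the microlocal/vanishing-cycle computations of \cite{CFMMX}*{Section 7}, in particular the normalization that makes $\NEvs_{C_\phi}$ restrict correctly over the generic conormal, to conclude that $\NEvs_\phi(\IC(\underline{\rho}_{C_\phi}))$ equals the image of $\rho$ under $\Rep(A_\phi)\to\Rep(A^\ABV_\phi)$. Combined with the identification $A_{C_\phi}\cong A_\phi$ (\cite{CFMMX}*{Lemma 4.4}) and the compatibility of the base-point choices, this yields commutativity on the nose.

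The main obstacle I anticipate is bookkeeping the normalizations: the functor $\NEvs$ differs from raw vanishing cycles $\RPhi$ by shifts, a sign twist, and a choice of how to trivialize the relevant Tate twists, and one must check these are exactly the conventions under which $\NEvs_{C_\phi}$ restricted to the diagonal stratum is the identity (after pullback) rather than something twisted by a character of $A^\ABV_\phi$. A secondary subtlety is that $\phi$ is allowed to be irrelevant for $G$, so $\Pi^\pure_\phi(G)$ could be empty or only partially populated; but since the diagram is a statement about a function on a (possibly empty) set, and the nonempty case is governed entirely by the quasi-split form via Lemma~\ref{lemma:PIFvarrho}-type reasoning and the structure of $\Pi^\pure_\lambda(G)$, this causes no real difficulty. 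In practice, since the statement is quoted verbatim from \cite{CFMMX}*{Theorem 7.22 (d)}, the "proof" here is essentially a pointer to that result together with a remark reconciling the present notation ($\Pi^{\ABVpure}_{\phi,\mathfrak{w}}$ versus the earlier $\Pi^{\ABV}_\phi(G/F)$, and $J(\mathfrak{w})$ versus $\iota_{\mathfrak{w}}^{-1}$) with the notation of \emph{loc. cit.}
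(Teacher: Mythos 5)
Your proposal matches the paper's proof essentially verbatim: the paper also traces $\pi$ through $J(\mathfrak{w})$ and $\mathcal{P}_{\mathfrak{w}}$ to get $\IC(\underline{\rho}_{C_{\phi_\pi}})$, invokes \cite{CFMMX}*{Theorem 7.22 (d)} for the diagonal computation $\NEvs_{C_{\phi_\pi}}\IC(\underline{\rho}_{C_{\phi_\pi}}) = \left(\underline{\rho}_{C_{\phi_\pi}} \boxtimes \1_{C_{\phi_\pi}^*}\right)\vert_{\Lambda^{\mathrm{gen}}_{C_{\phi_\pi}}}$, and identifies this with the pullback of $\underline{\rho}_{C_{\phi_\pi}}$ along $\Lambda^{\mathrm{gen}}_{C_{\phi_\pi}} \to C_{\phi_\pi}$, which is by definition the right vertical map. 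Your only deviation is the hedge about a possible shift or Tate twist, which the cited result shows is not needed; otherwise the argument is the same.
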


\begin{proof}
    Suppose $\pi \in \Pi^\pure_\phi(G)$. 
    Then $J(\mathfrak{w})(\pi) = (\phi_\pi, \rho)$ for $\rho \in \Rep(A_\phi)$.
    Now $\mathcal{P}_\mathfrak{w}(\pi) = \IC(\underline{\rho}_{C_{\phi_\pi}})$.
    By \cite{CFMMX}*{Theorem 7.22 (d)}, 
    \[
    \NEvs_{C_{\phi_\pi}} \IC(\underline{\rho}_{C_{\phi_\pi}})  = \left( \underline{\rho}_{C_{\phi_\pi}} \boxtimes \1_{C_{\phi_\pi}^*} \right)\vert_{\Lambda_{C_{\phi_\pi}^\mathrm{gen}}}.
    \]
    To finish the proof observe that the map $\Rep(A_\phi) \to \Rep(A^\ABV_\phi)$ is induced from pull back along the $H_\lambda$-equivariant projection
    \[
    \Lambda^\mathrm{gen}_{C_{\phi_\pi}} \to C_{\phi_\pi},
    \]
 and that the pullback of $\underline{\rho}_{C_{\phi_\pi}}$ by this projection is $\left( \underline{\rho}_{C_{\phi_\pi}} \boxtimes \1_{C_{\phi_\pi}^*} \right)\vert_{\Lambda_{C_{\phi_\pi}^\mathrm{gen}}}$.
 This completes the proof of Theorem~\ref{theorem:compatibility}.
\end{proof}

\subsection{Change of Whittaker normalization of ABV-packets}\label{sec:ABVCoW}

It is clear that the definition of the ABV-packet $\Pi_{\phi,\mathfrak{w}}^{\ABVpure}(G)$ depends on the local Langlands correspondence. The next result shows that this set does not depend on the Whittaker datum $\mathfrak{w}$, while the function $\NEvs_\phi  \circ\mathcal{P}_{\mathfrak{w}}  : \Pi_{\phi,\mathfrak{w}}^{\ABVpure}(G) \to \Rep(A^\ABV_{\phi})$ does, and shows exactly how.

First, however, recall that, for every pair of Whittaker datum $\mathfrak{w}, \mathfrak{w}'$, the following diagram commutes, according to the local Langlands correspondence as recalled in Section~\ref{sec:desiderata}.
\begin{equation}\label{eqn:desiderata}
\begin{tikzcd}
\Pi_{\phi}^{\pure}(G) \arrow[equal]{d} \arrow{rr}{J(\mathfrak{w})} &&  \widehat{A_{\phi}} \arrow{d}{\, -\, \otimes \varrho_{\mathfrak{w},\mathfrak{w}'}^{\phi}}\\
\Pi_{\phi}^{\pure}(G) \arrow{rr}{J(\mathfrak{w}')} && \widehat{A_{\phi}} 
\end{tikzcd}
\end{equation}
This motivates our next result, Theorem~\ref{theorem:CoW}, which expands on the compatibility of ABV-packets with the local Langlands correspondence, as expressed in Theorem~\ref{theorem:compatibility}.
  
\begin{theorem}\label{theorem:CoW}
For every Langlands parameter $\phi$, the ABV-packet $\Pi_{\phi,\mathfrak{w}}^{\ABVpure}(G)$ is independent of the choice of Whittaker datum $\mathfrak{w}$, while the function $\NEvs_\phi \circ\mathcal{P}_{\mathfrak{w}} : \Pi_{\phi,\mathfrak{w}}^{\ABVpure}(G)\to \Rep(A^\ABV_{\phi})$ does
depend on $\mathfrak{w}$, as follows: for every Whittaker datum $\mathfrak{w}'$, the following diagram commutes
\[
\begin{tikzcd}
\Pi_{\phi,\mathfrak{w}}^{\ABVpure}(G) \arrow[equal]{d} \arrow{rr}{\NEvs_\phi \circ\mathcal{P}_{\mathfrak{w}}} &&  \Rep(A^\ABV_{\phi}) \arrow{d}{\, -\, \otimes \varrho_{\mathfrak{w},\mathfrak{w}'}^{\ABV,\phi}}\\
\Pi_{\phi,\mathfrak{w}'}^{\ABVpure}(G) \arrow{rr}{\NEvs_\phi \circ\mathcal{P}_{\mathfrak{w}'}} && \Rep(A^\ABV_{\phi}) 
\end{tikzcd}
\]
where the character $\varrho_{\mathfrak{w},\mathfrak{w}'}^{\ABV,\phi}$ of $A^\ABV_\phi$ is the image of $\varrho_{\mathfrak{w},\mathfrak{w}'}^{\lambda}$ under $\Rep(A_\phi) \to \Rep(A^\ABV_\phi)$ appearing in Theorem~\ref{theorem:compatibility}. 
\end{theorem}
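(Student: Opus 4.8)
The plan is to reduce the statement to the combination of two facts already available in the excerpt: the compatibility of ABV-packets with the enhanced local Langlands correspondence (Theorem~\ref{theorem:compatibility}) and the change-of-Whittaker behaviour of the local Langlands correspondence itself (the commuting square in Equation~\eqref{eqn:desiderata}). First I would fix a Langlands parameter $\phi$ with infinitesimal parameter $\lambda$ and two Whittaker data $\mathfrak{w},\mathfrak{w}'$. The key observation is that the definition of $\Pi_{\phi,\mathfrak{w}}^{\ABVpure}(G)$ as $\{\pi\in\Pi_\lambda^\pure(G)\tq \NEvs_{C_\phi}(\mathcal{P}_{\mathfrak{w}}(\pi))\neq 0\}$ depends on $\mathfrak{w}$ only through $\mathcal{P}_\mathfrak{w}$, and $\mathcal{P}_\mathfrak{w}$ and $\mathcal{P}_{\mathfrak{w}'}$ differ only by precomposition with the self-bijection of $\Pi_\lambda^\pure(G)$ induced by twisting the enhancement $\rho$ by the character $\varrho_{\mathfrak{w},\mathfrak{w}'}^\phi$ (equivalently $\varrho^\lambda_{\mathfrak{w},\mathfrak{w}'}\circ\iota_{\lambda\le\phi}$). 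Since twisting an irreducible enhancement by a character produces another irreducible enhancement on the same orbit $C_\phi$, the underlying perverse sheaf $\IC(\underline{\rho}_{C_\phi})$ is replaced by $\IC(\underline{(\rho\otimes\varrho)}_{C_\phi})$, which is again nonzero under $\NEvs_{C_\phi}$ exactly when the original is — indeed $\NEvs_{C_\phi}$ of an $\IC$ sheaf of an irreducible local system on $C_\phi$ is, by Theorem~\ref{theorem:compatibility}, the restriction to $\Lambda^\mathrm{gen}_{C_\phi}$ of $\underline{\rho}_{C_\phi}\boxtimes\1_{C_\phi^*}$, whose nonvanishing does not depend on the isomorphism class of $\rho$. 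This proves the left vertical equality in the diagram, i.e. $\Pi_{\phi,\mathfrak{w}}^{\ABVpure}(G) = \Pi_{\phi,\mathfrak{w}'}^{\ABVpure}(G)$.

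Next I would address the commutativity of the square, i.e. the behaviour of the function $\NEvs_\phi\circ\mathcal{P}_\mathfrak{w}$. Here the strategy is to paste together two commuting squares. Consider the diagram whose top row is $\Pi^\pure_\phi(G)\xrightarrow{J(\mathfrak{w})}\Rep(A_\phi)$ and bottom row is $\Pi^{\ABVpure}_\phi(G)\xrightarrow{\NEvs_\phi\circ\mathcal{P}_\mathfrak{w}}\Rep(A^\ABV_\phi)$, which commutes by Theorem~\ref{theorem:compatibility}, with the analogous square for $\mathfrak{w}'$; glue them along the change-of-Whittaker square \eqref{eqn:desiderata} on the $\Rep(A_\phi)$ side. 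Concretely: given $\pi\in\Pi^\pure_\lambda(G)$ with $J(\mathfrak{w})(\pi)=(\phi_\pi,\rho)$, Theorem~\ref{theorem:compatibility} computes $\NEvs_\phi\mathcal{P}_\mathfrak{w}(\pi)$ as the image of $\rho$ under $\Rep(A_\phi)\to\Rep(A^\ABV_\phi)$; on the other hand $J(\mathfrak{w}')(\pi)=(\phi_\pi,\rho\otimes\varrho^{\phi}_{\mathfrak{w},\mathfrak{w}'})$ by \eqref{eqn:desiderata}, so $\NEvs_\phi\mathcal{P}_{\mathfrak{w}'}(\pi)$ is the image of $\rho\otimes\varrho^{\phi}_{\mathfrak{w},\mathfrak{w}'}$ under the same functor. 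Since $\Rep(A_\phi)\to\Rep(A^\ABV_\phi)$ is induced by the $H_\lambda$-equivariant projection $\Lambda^\mathrm{gen}_{C_\phi}\to C_\phi$, it is a tensor functor, so it carries the twist by $\varrho^{\phi}_{\mathfrak{w},\mathfrak{w}'}$ to the twist by its image $\varrho^{\ABV,\phi}_{\mathfrak{w},\mathfrak{w}'}$, which is exactly the right vertical arrow of the asserted square. This gives $\NEvs_\phi\mathcal{P}_{\mathfrak{w}'}(\pi) = (\NEvs_\phi\mathcal{P}_{\mathfrak{w}}(\pi))\otimes\varrho^{\ABV,\phi}_{\mathfrak{w},\mathfrak{w}'}$, as required. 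I would also note that, by construction, $\varrho^{\ABV,\phi}_{\mathfrak{w},\mathfrak{w}'}$ is the image of $\varrho^{\lambda}_{\mathfrak{w},\mathfrak{w}'}$, using the factorization $\varrho^\phi_{\mathfrak{w},\mathfrak{w}'} = \varrho^\lambda_{\mathfrak{w},\mathfrak{w}'}\circ\iota_{\lambda\le\phi}$ recorded at the end of Section~\ref{sec:CoWh} together with the compatibility of $\iota_{\lambda\le\phi}$ with the projections to the ABV component groups.

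The step I expect to require the most care is making precise that $\NEvs_{C_\phi}$, applied to $\mathcal{P}_\mathfrak{w}(\pi)$, only "sees" the orbit $C_\phi$ and not the enhancement — so that twisting $\rho$ genuinely cannot affect the nonvanishing condition defining the packet. This is essentially forced by Theorem~\ref{theorem:compatibility}, since the output $(\underline{\rho}_{C_\phi}\boxtimes\1_{C_\phi^*})\vert_{\Lambda^\mathrm{gen}_{C_\phi}}$ is a local system that is nonzero iff $\rho\neq 0$, which holds for any genuine irreducible enhancement; but one should be slightly careful that twisting an irreducible $\rho$ by the character $\varrho^\phi_{\mathfrak{w},\mathfrak{w}'}$ again lands among the enhancements realized by representations in $\Pi^\pure_\lambda(G)$ (i.e. that the bijection $\mathcal{P}_{\mathfrak{w}'}$ is defined on the same domain), which is exactly the content of Desideratum~(\ref{wittwisting}) in Section~\ref{sec:desiderata} and Lemma~\ref{lemma:PIFvarrho} (the twist does not change the pure inner form). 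The rest is diagram-pasting and the elementary fact that pull-back along an equivariant map is a tensor functor.
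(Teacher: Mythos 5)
Your argument establishes the theorem only on the sub-packet $\Pi^\pure_\phi(G)\subseteq \Pi^{\ABVpure}_{\phi,\mathfrak{w}}(G)$, and that is precisely the part that was already known; the genuinely new content of Theorem~\ref{theorem:CoW} concerns the representations $\pi$ in the ABV-packet with $\phi_\pi\neq\phi$, i.e.\ those for which $\mathcal{P}_{\mathfrak{w}}(\pi)=\IC(\underline{\rho}_{C_{\phi_\pi}})$ is supported on an orbit $C_{\phi_\pi}$ different from $C_\phi$ but $\NEvs_{C_\phi}(\IC(\underline{\rho}_{C_{\phi_\pi}}))\neq 0$. For these, Theorem~\ref{theorem:compatibility} (equivalently \cite{CFMMX}*{Theorem 7.22 (d)}) says nothing: it computes $\NEvs_{C}$ of an $\IC$ sheaf only \emph{at its own supporting orbit}, not $\NEvs_{C_\phi}$ of an $\IC$ sheaf attached to a strictly larger orbit. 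Your key assertion that the nonvanishing of $\NEvs_{C_\phi}(\mathcal{P}_{\mathfrak{w}}(\pi))$ ``does not depend on the isomorphism class of $\rho$'' is false as a general statement about local systems on $C_{\phi_\pi}$: microlocal vanishing cycles of $\IC$ sheaves of different local systems on the same orbit can have different supports (this is already visible in the $\SL_2$ example of \cite{CFMMX}*{Chapter 11}), so twisting the enhancement could, a priori, change both membership in the packet and the value of $\NEvs_\phi\circ\mathcal{P}_{\mathfrak{w}}$ on coronal members. The same gap affects your second paragraph: pasting Theorem~\ref{theorem:compatibility} with the square \eqref{eqn:desiderata} only computes $\NEvs_\phi\circ\mathcal{P}_{\mathfrak{w}}$ and $\NEvs_\phi\circ\mathcal{P}_{\mathfrak{w}'}$ on $\Pi^\pure_\phi(G)$, not on the whole domain of the asserted commutative square.

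What saves the day, and what your proposal mentions in passing but never uses, is that the specific twist is not an arbitrary character: $\varrho^{\phi_\pi}_{\mathfrak{w},\mathfrak{w}'}=\varrho^\lambda_{\mathfrak{w},\mathfrak{w}'}\circ\iota_{\lambda\leq\phi_\pi}$ factors through $A_\lambda=\pi_0(H_\lambda)$, hence defines an $H_\lambda$-equivariant rank-one local system $\underline{\varrho_{\mathfrak{w},\mathfrak{w}'}}_{V_\lambda}$ on \emph{all} of $V_\lambda$ whose restriction to each orbit recovers $\varrho^{\phi_\pi}_{\mathfrak{w},\mathfrak{w}'}$ (Lemma~\ref{lemma:restriction}). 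The paper then needs two further facts that are absent from your proposal: first, $\IC(\underline{\rho\otimes\varrho_{\mathfrak{w},\mathfrak{w}'}}_{C_{\phi_\pi}})=\IC(\underline{\rho}_{C_{\phi_\pi}})\otimes\underline{\varrho_{\mathfrak{w},\mathfrak{w}'}}_{V_\lambda}$ (Lemma~\ref{lemma:twistedIC}); second, the vanishing-cycles functor commutes with tensoring by such a global rank-one equivariant local system, $\NEvs_{C_\phi}(\mathcal{F}\otimes\underline{\varrho_{\mathfrak{w},\mathfrak{w}'}}_{V_\lambda})=\NEvs_{C_\phi}(\mathcal{F})\otimes\underline{\varrho_{\mathfrak{w},\mathfrak{w}'}}_{\Lambda^{\mathrm{gen}}_{C_\phi}}$, proved by an explicit computation with $\RPhi_{f_C}$ using \cite{CFMMX}*{Lemma 7.14} (Lemma~\ref{lemma:twistedNEvs}). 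Since tensoring by a rank-one local system is invertible, these two lemmas give at once the independence of the packet and the precise twist of the function for \emph{every} $\pi\in\Pi^\pure_\lambda(G)$, coronal members included. Without some substitute for these two lemmas, your proof does not establish the statement; with them, the part of your argument devoted to $\Pi^\pure_\phi(G)$ becomes redundant.
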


\begin{proof}
Fix Whittaker data $\mathfrak{w}$ and $\mathfrak{w}'$.
Fix an infinitesimal parameter $\lambda$.
Recall the character $\varrho_{\mathfrak{w},\mathfrak{w}'}^\lambda$ of $A_\lambda = \pi_0(H_\lambda)$ defined in Section~\ref{ssec:LLC}, Equation~\eqref{eqn:twistinglambda}.
We use \cite{CFMMX}*{Proposition~4.3},
to view $\varrho_{\mathfrak{w},\mathfrak{w}'}^\lambda$ as a local system on $V_\lambda$, using
\[
\begin{tikzcd}
\Rep(\pi_0(H_\lambda)) \arrow{rrr}
%{\varrho_{\mathfrak{w},\mathfrak{w}'}^\lambda  \mapsto \underline{\varrho_{\mathfrak{w},\mathfrak{w}'}}_{V_\lambda}}
%\arrow[shift left=4]{lll}{0_!, 0_*}
 &&& \Loc_{H_\lambda}(V_\lambda) \arrow[shift left]{rrr}{\mathrm{forget}} &&&  \Loc_{H_\lambda^0}(V_\lambda) 
 %\arrow[shift left]{lll}{\pi_*}
\end{tikzcd}
\]
in which we use the notation 
\[
\underline{\varrho_{\mathfrak{w},\mathfrak{w}'}}_{V_\lambda} \in \Loc_{H_\lambda}(V_\lambda)
\]
 for the image of $\varrho_{\mathfrak{w},\mathfrak{w}'}^\lambda$ under 
$
\Rep(\pi_0(H_\lambda)) \to \Loc_{H_\lambda}(V_\lambda).
$
Observe that the image of this local system is the constant sheaf under the functor that forgets to $H_\lambda^\circ$-equivariant sheaves.
Likewise, for any Langlands parameter, recall the character $\varrho_{\mathfrak{w},\mathfrak{w}'}^\phi$ of $A_\phi$ from Equation~\eqref{eqn:twisting} and 
use the equivalence 
\[
\Rep(A_\phi) \to \Loc_{H_\lambda}(C_\phi)
\]
to view $\varrho_{\mathfrak{w},\mathfrak{w}'}^\phi$ as a local system on $C_\phi$, denoted now by
\[
\underline{\varrho_{\mathfrak{w},\mathfrak{w}'}}_{C_\phi} \in \Loc_{H_\lambda}(V_\lambda).
\]
By Lemma~\ref{lemma:restriction}, if $\phi$ has infinitesimal parameter $\lambda$, then
\[
\left(\underline{\varrho_{\mathfrak{w},\mathfrak{w}'}}_{V_\lambda}\right)\vert_{C_\phi} = \underline{\varrho_{\mathfrak{w},\mathfrak{w}'}}_{C_\phi}.
\]

Now suppose $\pi \in \Pi^\pure_\lambda(G)$.
Then $J(\mathfrak{w})(\pi) = (\phi_\pi,\rho)$, for a unique $\rho\in \widehat{A_\phi}$; 
equivalently, $\iota_{\mathfrak{w}}(\rho) = \pi$.
Then 
\[
\mathcal{P}_{\mathfrak{w}} (\pi) =  \IC(\underline{\rho}_{C_{\phi_\pi}}).
\]
From Section~\ref{sec:desiderata}, $\iota_{\mathfrak{w}}(\rho) = \iota_{\mathfrak{w}'}(\rho\otimes \varrho^{\phi_\pi}_{\mathfrak{w},\mathfrak{w}'})$, 
so $J(\mathfrak{w}')(\pi) = (\phi_\pi,\rho\otimes \varrho^{\phi_\pi}_{\mathfrak{w},\mathfrak{w}'})$. 
Thus,
\[
\mathcal{P}_{\mathfrak{w}'} (\pi) =  \IC(\underline{\rho\otimes \varrho_{\mathfrak{w},\mathfrak{w}'}}_{C_{\phi_\pi}}).
\]
By Lemma~\ref{lemma:twistedIC},
\[
\IC(\underline{\rho\otimes \varrho_{\mathfrak{w},\mathfrak{w}'}}_{C_{\phi_\pi}})
=
\IC(\underline{\rho}_{C_{\phi_\pi}}) \otimes \underline{\varrho_{\mathfrak{w},\mathfrak{w}'}}_{V_\lambda}.
\]

Now, in order to calculate $\NEvs_{C_\phi} \mathcal{P}_{\mathfrak{w}'} (\pi)$ we define
\begin{equation}\label{eqn:conormal}
\underline{\varrho_{\mathfrak{w},\mathfrak{w}'}}_{\Lambda^\mathrm{gen}_{C}}
\ceq
\left( \underline{\varrho_{\mathfrak{w},\mathfrak{w}'}}_{V_\lambda} \boxtimes \1_{V_\lambda^*} \right)\vert_{\Lambda_{C_{\phi_\pi}^\mathrm{gen}}}.
\end{equation}
Then
\[
\begin{array}{rclr}
\NEvs_{C_\phi} \mathcal{P}_{\mathfrak{w}'} (\pi)
&=& \NEvs_{C_\phi} \IC(\underline{\rho\otimes \varrho_{\mathfrak{w},\mathfrak{w}'}}_{C_{\phi_\pi}})& \text{by Definition~\ref{definition:ABVw}}\\
&=& \NEvs_{C_\phi} \left( \IC(\underline{\rho}_{C_{\phi_\pi}}) \otimes \underline{\varrho_{\mathfrak{w},\mathfrak{w}'}}_{V_\lambda}\right)& \text{by Lemma~\ref{lemma:twistedIC}}\\
&=& \left( \NEvs_{C_\phi} \IC(\underline{\rho}_{C_{\phi_\pi}}) \right) \otimes \underline{\varrho_{\mathfrak{w},\mathfrak{w}'}}_{\Lambda^\mathrm{gen}_{C_{\phi_\pi}}}& \text{by Lemma~\ref{lemma:twistedNEvs}} \\
&=& \left( \NEvs_{C_\phi} \mathcal{P}_{\mathfrak{w}}(\pi) \right) \otimes \underline{\varrho_{\mathfrak{w},\mathfrak{w}'}}_{\Lambda^\mathrm{gen}_{C_{\phi}}}& \text{by Definition~\ref{definition:ABVw}}. \\
\end{array}
\]
Therefore,
\[
\begin{array}{rcl}
\NEvs_{\phi} \circ\mathcal{P}_{\mathfrak{w}'} (\pi)
&=& \left( \NEvs_{\phi} \circ\mathcal{P}_{\mathfrak{w}}(\pi) \right) \otimes \varrho_{\mathfrak{w},\mathfrak{w}'}^{\ABV,\phi}, \\
\end{array}
\]
This concludes the proof of Proposition~\ref{theorem:CoW}
\end{proof}

\begin{lemma}\label{lemma:restriction}
\[
\left(\underline{\varrho_{\mathfrak{w},\mathfrak{w}'}}_{V_\lambda}\right)\vert_{C} = \underline{\varrho_{\mathfrak{w},\mathfrak{w}'}}_{C}.
\]
\end{lemma}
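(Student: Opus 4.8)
The plan is to reduce the claim to the compatibility $\varrho^\phi_{\mathfrak{w},\mathfrak{w}'} = \varrho^\lambda_{\mathfrak{w},\mathfrak{w}'}\circ\iota_{\lambda\leq\phi}$ recorded at the end of Section~\ref{sec:CoWh}, where $\phi$ denotes the Langlands parameter with $C_\phi=C$ (which has infinitesimal parameter $\lambda$ since $C\subseteq V_\lambda$) and $\iota_{\lambda\leq\phi}\colon A_\phi\to A_\lambda$ is the map induced by $Z_{\widehat{G}}(\phi)\subseteq Z_{\widehat{G}}(\lambda)$. By construction, $\underline{\varrho_{\mathfrak{w},\mathfrak{w}'}}_{V_\lambda}$ is the image of $\varrho^\lambda_{\mathfrak{w},\mathfrak{w}'}\in\Rep(\pi_0(H_\lambda))=\Rep(A_\lambda)$ under the functor $\Rep(A_\lambda)\to\Loc_{H_\lambda}(V_\lambda)$ of \cite{CFMMX}*{Proposition 4.3}, while $\underline{\varrho_{\mathfrak{w},\mathfrak{w}'}}_{C}$ is the image of $\varrho^\phi_{\mathfrak{w},\mathfrak{w}'}\in\Rep(A_\phi)$ under the equivalence $\Rep(A_\phi)\to\Loc_{H_\lambda}(C)$ determined by the base point $x_\phi\in C$. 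Hence it suffices to show that the square
\[
\begin{tikzcd}
\Rep(A_\lambda) \arrow{r} \arrow{d}[swap]{\iota_{\lambda\leq\phi}^{*}} & \Loc_{H_\lambda}(V_\lambda) \arrow{d}{(-)\vert_{C}} \\
\Rep(A_\phi) \arrow{r}{\sim} & \Loc_{H_\lambda}(C)
\end{tikzcd}
\]
commutes, where the left vertical arrow is restriction of representations along $\iota_{\lambda\leq\phi}$; the lemma then follows by chasing $\varrho^\lambda_{\mathfrak{w},\mathfrak{w}'}$ around the square and invoking $\varrho^\phi_{\mathfrak{w},\mathfrak{w}'}=\varrho^\lambda_{\mathfrak{w},\mathfrak{w}'}\circ\iota_{\lambda\leq\phi}$.

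To prove commutativity of the square I would work with the fixed base point $x_\phi\in C\subseteq V_\lambda$ throughout. The category $\Loc_{H_\lambda}(V_\lambda)$ is equivalent to $\Rep\bigl(\pi_1^{H_\lambda}(V_\lambda,x_\phi)\bigr)$, and since $V_\lambda$ is a vector space, hence contractible, its equivariant fundamental group is canonically $\pi_0(H_\lambda)=A_\lambda$ (contractibility also makes the change of base point from the $H_\lambda$-fixed point $0$ to $x_\phi$ canonical); this is the content of \cite{CFMMX}*{Proposition 4.3}. Likewise $\Loc_{H_\lambda}(C)\simeq\Rep\bigl(\pi_1^{H_\lambda}(C,x_\phi)\bigr)$ and $\pi_1^{H_\lambda}(C,x_\phi)\cong\pi_0\bigl(Z_{H_\lambda}(x_\phi)\bigr)=A_{C}=A_\phi$ by \cite{CFMMX}*{Lemma 4.4}. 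Under these identifications, pull-back of equivariant local systems along the $H_\lambda$-equivariant inclusion $C\hookrightarrow V_\lambda$ is restriction of representations along the homomorphism $\pi_1^{H_\lambda}(C,x_\phi)\to\pi_1^{H_\lambda}(V_\lambda,x_\phi)$ induced on equivariant fundamental groups, by functoriality of the local system/representation dictionary. Finally, that induced homomorphism is exactly $\iota_{\lambda\leq\phi}$: both are the map $\pi_0\bigl(Z_{H_\lambda}(x_\phi)\bigr)\to\pi_0(H_\lambda)$ arising from the subgroup inclusion $Z_{H_\lambda}(x_\phi)=Z_{\widehat{G}}(\phi)\hookrightarrow H_\lambda=Z_{\widehat{G}}(\lambda)$ --- the former because, passing to homotopy quotients, $C_{hH_\lambda}\simeq BZ_{H_\lambda}(x_\phi)$ and $(V_\lambda)_{hH_\lambda}\simeq BH_\lambda$ with the orbit inclusion inducing $BZ_{H_\lambda}(x_\phi)\to BH_\lambda$, and the latter by the definition of $\iota_{\lambda\leq\phi}$ in Section~\ref{sec:CoWh}.

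The only genuine work is the bookkeeping in the second paragraph: one must check that \cite{CFMMX}*{Proposition 4.3 and Lemma 4.4} set up the two equivalences $\Loc_{H_\lambda}(V_\lambda)\simeq\Rep(A_\lambda)$ and $\Loc_{H_\lambda}(C)\simeq\Rep(A_\phi)$ with \emph{matching} base-point conventions, so that the map they induce on equivariant fundamental groups from the inclusion $C\hookrightarrow V_\lambda$ is literally $\iota_{\lambda\leq\phi}$ rather than merely conjugate to it. I expect this to be the main --- and only mildly delicate --- point; once the conventions are aligned, the diagram chase concluding the lemma is immediate.
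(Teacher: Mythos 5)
Your proposal is correct and is essentially the paper's own argument: the paper's proof is exactly the one-line observation that the inclusion $Z_{\widehat{G}}(\phi)\hookrightarrow Z_{\widehat{G}}(\lambda)$ induces $\iota_{\lambda\leq\phi}\colon A_\phi\to A_\lambda$ together with the compatibility $\varrho^\phi_{\mathfrak{w},\mathfrak{w}'}=\varrho^\lambda_{\mathfrak{w},\mathfrak{w}'}\circ\iota_{\lambda\leq\phi}$ from Section~\ref{sec:CoWh}, and your base-point bookkeeping simply unpacks why restriction of equivariant local systems realizes that map. The only nitpick is that $Z_{\widehat{G}}(\phi)$ is contained in, rather than equal to, $Z_{H_\lambda}(x_\phi)$, with the inclusion inducing an isomorphism on $\pi_0$ (this is the content of \cite{CFMMX}*{Lemma 4.4}), which does not affect your diagram chase.
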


\begin{proof}
This follows from $Z_{\dualgroup{G}}(\phi) \hookrightarrow Z_{\dualgroup{G}}(\lambda)$, for any Langlands parameter $\phi$ with infinitesimal parameter $\lambda$, which induces the map $\iota_{\lambda \leq \phi} : A_\phi \to A_\lambda$ appearing at the end of Section~\ref{sec:CoWh}.
\end{proof}

The following lemmas describe how changing Whittaker normalization acts on the fibers of the forgetful functor appearing in 
\cite{CFMMX}*{Proposition~4.3}.
\[
\begin{tikzcd}
\Rep(\pi_0(H_\lambda)) \arrow{rrr}
%{\varrho_{\mathfrak{w},\mathfrak{w}'}^\lambda  \mapsto \underline{\varrho_{\mathfrak{w},\mathfrak{w}'}}_{V_\lambda}}
%\arrow[shift left=4]{lll}{0_!, 0_*}
 &&& \Loc_{H_\lambda}(V_\lambda) \arrow[shift left]{rrr}{\mathrm{forget}} &&&  \Loc_{H_\lambda^0}(V_\lambda) 
 %\arrow[shift left]{lll}{\pi_*}
\end{tikzcd}
\]

\begin{lemma}\label{lemma:twistedIC}
If $\mathcal{L}_C$ is a simple $H_\lambda$-equivariant local system on an $H_\lambda$-orbit $C$ in $V_\lambda$ then
\[
\IC(\mathcal{L}_C\otimes \underline{\varrho_{\mathfrak{w},\mathfrak{w}'}}_{C} ) = \IC(\mathcal{L}_C)\otimes \underline{\varrho_{\mathfrak{w},\mathfrak{w}'}}_{V_\lambda}
\]
\end{lemma}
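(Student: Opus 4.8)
The plan is to exploit that $\underline{\varrho_{\mathfrak{w},\mathfrak{w}'}}_{V_\lambda}$ is a rank-one local system on the whole of $V_\lambda$, so that tensoring by it is a particularly well-behaved operation on perverse sheaves.

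First I would recall the setup. The character $\varrho^\lambda_{\mathfrak{w},\mathfrak{w}'}$ of $A_\lambda = \pi_0(H_\lambda)$ is one-dimensional, so under the equivalence $\Rep(\pi_0(H_\lambda)) \to \Loc_{H_\lambda}(V_\lambda)$ of \cite{CFMMX}*{Proposition~4.3} (valid because the vector space $V_\lambda$ is $H_\lambda$-equivariantly contractible, so its equivariant fundamental group is $\pi_0(H_\lambda)$) it corresponds to a \emph{rank-one} $H_\lambda$-equivariant local system $\underline{\varrho_{\mathfrak{w},\mathfrak{w}'}}_{V_\lambda}$ defined on all of $V_\lambda$. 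In particular its restriction to any $H_\lambda$-orbit $C' \subseteq V_\lambda$ is a rank-one $H_\lambda$-equivariant local system, and by Lemma~\ref{lemma:restriction} its restriction to $C$ is exactly $\underline{\varrho_{\mathfrak{w},\mathfrak{w}'}}_{C}$.

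Next I would argue that $\mathcal{F} \mapsto \mathcal{F}\otimes \underline{\varrho_{\mathfrak{w},\mathfrak{w}'}}_{V_\lambda}$ is an auto-equivalence of $\Perv_{H_\lambda}(V_\lambda)$: tensoring by a local system placed in cohomological degree $0$ is perverse $t$-exact on $V_\lambda$, and the quasi-inverse is tensoring by the dual local system; hence this functor preserves simple objects. Moreover, writing $j : C \hookrightarrow \bar C$ for the (smooth, locally closed) embedding and $\IC(\mathcal{L}_C) = j_{!*}(\mathcal{L}_C[\dim C])$ (extended by zero to $V_\lambda$), the projection formula shows that $-\otimes \underline{\varrho_{\mathfrak{w},\mathfrak{w}'}}_{V_\lambda}$ commutes with $j_!$, with $j_*$, and with restriction to locally closed subvarieties; since $j_{!*}$ is the image of the canonical map $j_! \to j_*$, it commutes with $j_{!*}$ as well. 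Therefore $\IC(\mathcal{L}_C)\otimes \underline{\varrho_{\mathfrak{w},\mathfrak{w}'}}_{V_\lambda}$ is a simple perverse sheaf on $V_\lambda$ supported on $\bar C$, with no sub- or quotient object supported on $\bar C \setminus C$, and whose restriction to $C$ is $(\mathcal{L}_C\otimes \underline{\varrho_{\mathfrak{w},\mathfrak{w}'}}_{V_\lambda})\vert_C = \mathcal{L}_C\otimes \underline{\varrho_{\mathfrak{w},\mathfrak{w}'}}_{C}$ by the first paragraph. By the uniqueness characterization of the intermediate extension this identifies it with $\IC(\mathcal{L}_C\otimes \underline{\varrho_{\mathfrak{w},\mathfrak{w}'}}_{C})$, which is the claimed equality.

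The only delicate point is the compatibility of $-\otimes \underline{\varrho_{\mathfrak{w},\mathfrak{w}'}}_{V_\lambda}$ with $j_{!*}$ in the $H_\lambda$-equivariant setting; I expect this to be the main thing to spell out, but it is formal, following from the projection formula together with the fact that $\underline{\varrho_{\mathfrak{w},\mathfrak{w}'}}_{V_\lambda}$ is lisse on the ambient space — hence lisse on every stratum — so tensoring preserves the support and cosupport estimates that define the intermediate extension. Everything else is a routine application of the standard yoga of perverse sheaves.
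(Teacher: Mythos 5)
Your proposal is correct, and it reaches the equality by a genuinely different mechanism than the paper. The paper's own proof first observes that $\IC(\mathcal{L}_C)\otimes \underline{\varrho_{\mathfrak{w},\mathfrak{w}'}}_{V_\lambda}$ is perverse by applying the forgetful functor $\Perv_{H_\lambda}(V_\lambda)\to\Perv(V_\lambda)$, under which $\underline{\varrho_{\mathfrak{w},\mathfrak{w}'}}_{V_\lambda}$ becomes the constant sheaf so the underlying complex is just $\IC(\mathcal{L}_C)$; it then compares restrictions to $C$ via Lemma~\ref{lemma:restriction} and concludes by the classification of simple objects in $\Perv_{H_\lambda}(V_\lambda)$. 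You instead prove that $-\otimes \underline{\varrho_{\mathfrak{w},\mathfrak{w}'}}_{V_\lambda}$ is a perverse $t$-exact autoequivalence and, via the projection formula for the lisse rank-one sheaf, that it commutes with $j_!$, $j_*$ and hence with $j_{!*}$, so the identity follows directly from the characterization of the intermediate extension. Your route buys more: it never uses simplicity of $\mathcal{L}_C$ nor the classification theorem, and the commutation of the twist with $j_{!*}$ is a reusable general statement (it is essentially what is needed again in Lemma~\ref{lemma:twistedNEvs}); the cost is that you must verify the equivariant compatibility of the projection formula and of $j_{!*}$, which you correctly flag as the point to spell out, and which is indeed formal because the forgetful functor to $\Perv_{H_\lambda^\circ}$- or plain perverse sheaves is exact and faithful and the twist forgets to the constant sheaf. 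The paper's argument is shorter precisely because it outsources this to the classification of simple equivariant perverse sheaves, at the price of only covering the simple case stated in the lemma.
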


\begin{proof}
That $\IC(\mathcal{L}_C)\otimes \underline{\varrho_{\mathfrak{w},\mathfrak{w}'}}_{V_\lambda}$ is perverse can be seen by applying the forgetful functor $\Perv_{H_\lambda}(V_\lambda) \to \Perv(V_\lambda)$ to this complex and observing that the image is exactly $\IC(\mathcal{L}_C)$, since the image of $\underline{\varrho_{\mathfrak{w},\mathfrak{w}'}}_{V_\lambda}$ is the constant sheaf on $V_\lambda$.
By Lemma~\ref{lemma:restriction}, the restriction of $\IC(\mathcal{L}_C)\otimes \underline{\varrho_{\mathfrak{w},\mathfrak{w}'}}_{V_\lambda}$
to $C$ is $\mathcal{L}_C \otimes\underline{\varrho_{\mathfrak{w},\mathfrak{w}'}}_{C}$.
The same is true of the left hand side. 
The lemma now follows from the classification of simple objects in $\Perv_{H_\lambda}(V_\lambda)$.
\iffalse
\todo[inline]{
That $ \IC(D, \mathcal{F})|_S \otimes  \mathcal{L}$ is perverse uses flatness of $ \mathcal{L}$ and $\mathcal{L}^\vee = {\rm RHom}(\mathcal{L} ,\1_S)$ and that under Verdier duality we have
\begin{align*}
 D_S( \IC(D, \mathcal{F})|_S \otimes  \mathcal{L})  =  D_S( \IC(D, \mathcal{F})|_S )  \otimes \mathcal{L}^\vee.
 \end{align*}
The simplicity of $ \IC(D, \mathcal{F})|_S \otimes  \mathcal{L}$ is a consequence of exactness, that is
\[   \IC(D',\mathcal{F}') \rightarrow  \IC(D, \mathcal{F})|_S \otimes  \mathcal{L} \]
implies, for example,
\[  \IC(D',\mathcal{F}')  \otimes  \mathcal{L}^\vee  \rightarrow  \IC(D, \mathcal{F})|_S.\]
}
\fi
\end{proof}

\begin{lemma}\label{lemma:twistedNEvs}
Suppose $\mathcal{F}\in \Perv_{H_\lambda}(V_\lambda)$ is simple. 
Then
\[
\NEvs_{C} ( \mathcal{F}\otimes \underline{\varrho_{\mathfrak{w},\mathfrak{w}'}}_{V_\lambda} ) = \NEvs_{C} ( \mathcal{F} )\otimes \underline{\varrho_{\mathfrak{w},\mathfrak{w}'}}_{\Lambda^\mathrm{gen}_{C}},
\]
for every $H_\lambda$-orbit $C$ in $V_\lambda$.
Equivalently, for any Langlands parameter $\phi$ for $G$,
\[
\NEvs_{\phi} \left(\mathcal{F}\otimes \underline{\varrho_{\mathfrak{w},\mathfrak{w}'}}_{V_\lambda}\right)
= 
\left(\NEvs_{\phi} \mathcal{F}\right) \otimes \varrho_{\mathfrak{w},\mathfrak{w}'}^{\ABV,\phi},
\]
where $\lambda$ is the infinitesimal parameter for $\phi$.
\end{lemma}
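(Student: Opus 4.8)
The plan is to prove the statement by reducing $\NEvs_C$ to the underlying (non-equivariant) vanishing cycles functor, where tensoring by a local system that becomes constant after forgetting equivariance can be tracked explicitly through the nearby/vanishing cycle construction. First I would recall from \cite{CFMMX}*{Section 7.10} the definition of $\NEvs_C$ as a composite of the vanishing cycles functor $\RPhi$ along the family on $T^*(V_\lambda) = V_\lambda \times V_\lambda^*$ (using a suitable pairing/function), a restriction to the conormal, and the passage to local systems on $\Lambda^\mathrm{gen}_C$. The key observation is that $\underline{\varrho_{\mathfrak{w},\mathfrak{w}'}}_{V_\lambda}$ is, after forgetting $H_\lambda$-equivariance to $H_\lambda^\circ$-equivariance, the constant sheaf on the vector space $V_\lambda$; since $V_\lambda$ is smooth and the local system is of rank one and lisse on all of $V_\lambda$, tensoring by it commutes with all the standard functors (pullback, proper pushforward, shifted restriction, nearby and vanishing cycles) — this is a projection-formula-type statement combined with the fact that a lisse rank-one sheaf on the total space pulls back/out of $\RPsi$ and $\RPhi$ because those functors are computed on the special/nearby fibre and the sheaf restricts to a lisse sheaf there as well.

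The main steps, in order: (1) Write $\NEvs_C(\mathcal{F}) = \left(\text{something}\right)\vert_{\Lambda^\mathrm{gen}_C}$ where the ``something'' is built from $\mathcal{F}$ by pullback along $T^*(V_\lambda) \to V_\lambda$, external product with $\1_{V_\lambda^*}$, and applying $\RPhi$ along the canonical function, then shift; track exactly how $\underline{\varrho_{\mathfrak{w},\mathfrak{w}'}}_{V_\lambda}$ propagates: its pullback to $T^*(V_\lambda)$ is $\underline{\varrho_{\mathfrak{w},\mathfrak{w}'}}_{V_\lambda} \boxtimes \1_{V_\lambda^*}$, which is exactly the definition of $\underline{\varrho_{\mathfrak{w},\mathfrak{w}'}}_{\Lambda^\mathrm{gen}_C}$ after restriction, as set in Equation~\eqref{eqn:conormal}. (2) Invoke the compatibility of $\RPhi$ (and $\RPsi$) with tensoring by a lisse sheaf pulled back from the base of the family — equivalently a lisse sheaf on the total space that is constant along the fibres of the defining function — citing the standard references (SGA 7, or Kashiwara–Schapira, or the treatment in \cite{CFMMX}); this gives $\RPhi(\mathcal{G} \otimes \mathcal{L}) = \RPhi(\mathcal{G}) \otimes \mathcal{L}\vert_{(\text{nearby/special fibre})}$ for $\mathcal{L}$ lisse. (3) Since the external tensor $\underline{\varrho_{\mathfrak{w},\mathfrak{w}'}}_{V_\lambda} \boxtimes \1_{V_\lambda^*}$ is lisse on all of $T^*(V_\lambda)$, it passes through both the vanishing cycle step and the final restriction to $\Lambda^\mathrm{gen}_C$ unchanged, yielding the first displayed equality. (4) For the ``equivalently'' clause, note that $\varrho_{\mathfrak{w},\mathfrak{w}'}^{\ABV,\phi}$ is by definition the image of $\varrho_{\mathfrak{w},\mathfrak{w}'}^\lambda$ under $\Rep(A_\phi) \to \Rep(A^\ABV_\phi)$ from Theorem~\ref{theorem:compatibility}, and this image corresponds under the equivalence $\Rep(A^\ABV_\phi) \cong \Loc_{H_\lambda}(\Lambda^\mathrm{gen}_{C_\phi})$ precisely to $\underline{\varrho_{\mathfrak{w},\mathfrak{w}'}}_{\Lambda^\mathrm{gen}_{C_\phi}}$; this is because the map $\Rep(A_\phi) \to \Rep(A^\ABV_\phi)$ was defined via pullback along $\Lambda^\mathrm{gen}_{C_\phi} \to C_\phi$, and by Lemma~\ref{lemma:restriction} together with the definition \eqref{eqn:conormal}, pulling back $\underline{\varrho_{\mathfrak{w},\mathfrak{w}'}}_{C_\phi}$ along this projection gives exactly $\underline{\varrho_{\mathfrak{w},\mathfrak{w}'}}_{\Lambda^\mathrm{gen}_{C_\phi}}$.

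I expect the main obstacle to be making precise the claim that $\NEvs_C$ commutes with $\otimes \mathcal{L}$ for $\mathcal{L}$ the relevant rank-one lisse equivariant local system — specifically, verifying that each intermediate functor in the definition of $\NEvs_C$ (the convolution/pullback setup, the vanishing cycles along the correct function on $T^*(V_\lambda)$, and the restriction to the regular conormal $\Lambda^\mathrm{gen}_C$) respects this tensoring, and keeping the equivariant structures matched at every stage. The non-equivariant statement is essentially the projection formula for $\RPhi$, but one must be careful that the function defining the vanishing cycles on $T^*(V_\lambda)$ is pulled back from the second factor $V_\lambda^*$ (or is bilinear), so that $\underline{\varrho_{\mathfrak{w},\mathfrak{w}'}}_{V_\lambda} \boxtimes \1_{V_\lambda^*}$ is indeed constant along the direction in which the vanishing cycles are taken; granting that, the rest is bookkeeping. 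A clean way to sidestep delicate point-set arguments is to reduce, via the forgetful functor to $\Perv(V_\lambda)$ as in the proof of Lemma~\ref{lemma:twistedIC}, to a statement about non-equivariant sheaves where one can cite SGA 7 directly, and then note that all identifications are canonical hence $H_\lambda$-equivariant.
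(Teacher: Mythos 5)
Your proposal matches the paper's proof in essentially every respect: the paper unwinds the definition of $\NEvs_C$ from \cite{CFMMX}*{Section 7.10} as $\mathcal{T}_C^\vee \otimes \left(\RPhi_{f_C}(\,\cdot\, \boxtimes \1_{C^*})\right)\vert_{\Lambda^{\mathrm{gen}}_C}$, pulls the rank-one local system $\underline{\varrho_{\mathfrak{w},\mathfrak{w}'}}_{V_\lambda}\boxtimes \1_{C^*}$ (which forgets to the constant sheaf) out of the vanishing cycles functor via \cite{CFMMX}*{Lemma 7.14} exactly as in the proof of \cite{CFMMX}*{Proposition 7.13}, and then identifies the restricted external product with $\underline{\varrho_{\mathfrak{w},\mathfrak{w}'}}_{\Lambda^{\mathrm{gen}}_{C}}$, which is your steps (1)–(4). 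The subtlety you flag about the local system being constant in the direction of the defining function is precisely what the cited Lemma 7.14 handles, so your argument is the paper's argument.
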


\begin{proof}
Recall that $\varrho_{\mathfrak{w},\mathfrak{w}'}^{\ABV,\phi}$ denotes the image of $\varrho_{\mathfrak{w},\mathfrak{w}'}^{\lambda}$ under $\Rep(A_\phi) \to \Rep(A^\ABV_\phi)$, induced by the $H_\lambda$-equivariant projection $\Lambda^\mathrm{gen}_{C_\phi} \to C_\phi$.
\[
\underline{\varrho_{\mathfrak{w},\mathfrak{w}'}}_{\Lambda^\mathrm{gen}_{C_\phi}} 
%:= \underline{\varrho_{\mathfrak{w},\mathfrak{w}'}^{\ABV,\phi}}_{\Lambda^\mathrm{gen}_{C_\phi}} 
= 
\left( \underline{\varrho_{\mathfrak{w},\mathfrak{w}'}} \boxtimes \1_{C_{\phi_\pi}^*} \right)\vert_{\Lambda_{C_{\phi_\pi}^\mathrm{gen}}}
\]
\iffalse
\todo[inline]{
Note that:\\
$
\underline{\varrho_{\mathfrak{w},\mathfrak{w}'}}_{\Lambda^\mathrm{gen}_{C_\phi}} 
= 
\left( \underline{\varrho_{\mathfrak{w},\mathfrak{w}'}} \boxtimes \1_{C_{\phi_\pi}^*} \right)\vert_{\Lambda_{C_{\phi_\pi}^\mathrm{gen}}} = \left( \underline{\varrho_{\mathfrak{w},\mathfrak{w}'}} \boxtimes \1_{V}^* \right)\vert_{\Lambda_{C_{\phi_\pi}^\mathrm{gen}}}
$\\
and that\\
$(\NEvs_{C} \mathcal{F})
\otimes 
\underline{\varrho_{\mathfrak{w},\mathfrak{w}'}}_{\Lambda^\mathrm{gen}_{C}} = (\NEvs_{C} \mathcal{F})
\otimes 
\left( \underline{\varrho_{\mathfrak{w},\mathfrak{w}'}} \boxtimes \1_{V}^* \right)
$\\
(where the only hand waving is that on RHS we have included the support of $NEVs$ back into the conormal bundle.)
So that we could define\\
$
\left(\NEvs_{\phi} \mathcal{F}\right) \otimes \varrho_{\mathfrak{w},\mathfrak{w}'}^{\ABV}= \underline{\varrho_{\mathfrak{w},\mathfrak{w}'}} \boxtimes \1_{V}^*
$\\
so as to have something that clearly doesn't depend on the orbit.\\
If this is desirable we can update the statement and calculation below.
}
\fi
Using this, and arguing as in the proof of \cite{CFMMX}*{Proposition~7.13}, gives
\[
\begin{array}{rcl}
\NEvs_{C} ( \mathcal{F}\otimes \underline{\varrho_{\mathfrak{w},\mathfrak{w}'}}_{V_\lambda} )     
&=& 
\mathcal{T}_{C}^\vee\otimes
\left(\RPhi_{f_{C}}((\mathcal{F}\otimes \underline{\varrho_{\mathfrak{w},\mathfrak{w}'}}_{V_\lambda})\boxtimes \1_{C^*})\right)\vert_{\Lambda^\mathrm{gen}_{C}} \\
&=& 
\mathcal{T}_{C}^\vee\otimes
\left(\RPhi_{f_{C}}\left((\mathcal{F}\boxtimes \1_{C^*}) \otimes ( \underline{\varrho_{\mathfrak{w},\mathfrak{w}'}}_{V_\lambda}\boxtimes \1_{C^*})\right)\right)\vert_{\Lambda^\mathrm{gen}_{C}} \\
&=& \mathcal{T}_{C}^\vee\otimes
\left(\RPhi_{f_{C}}(\mathcal{F}\boxtimes \1_{C^*})\right)\vert_{\Lambda^\mathrm{gen}_{C}}
\otimes 
\left( \underline{\varrho_{\mathfrak{w},\mathfrak{w}'}}_{V_\lambda}\boxtimes \1_{C^*}\right)\vert_{\Lambda^\mathrm{gen}_{C}} \\
&=&
(\NEvs_{C} \mathcal{F})
\otimes 
\underline{\varrho_{\mathfrak{w},\mathfrak{w}'}}_{\Lambda^\mathrm{gen}_{C}}.
\end{array}
\]
The first and fourth equalities are by definition of $\NEvs_C$ \cite{CFMMX}*{Section 7.10}.
The second equality is elementary. 
The third equality uses \cite{CFMMX}*{Lemma 7.14} as it is used in the proof of \cite{CFMMX}*{Proposition~7.13}. 
Note that $\underline{\varrho_{\mathfrak{w},\mathfrak{w}'}}_{V_\lambda}\boxtimes \1_{C^*}$ forgets to  the constant sheaf $\1_{V_\lambda \times C^*}$ on $V_\lambda \times C^*$.
%\todo[inline]{Could you add detail to this proof please? Or at least check it over?  -CC\\
%Aside from the comment I added above for consideration and a sentence I removed, the proof is what I would have wanted to write without actually redoing the proof of 7.14. (When I opened the file with a plan to write this I was confused about how I might already have done it in my sleep) -AF.}
\end{proof}

\begin{remark}\label{remark:disconnectedH}
Unless the group $H_\lambda$ is disconnected, it is unusual for there to be more than one irreducible $H_\lambda$-equivariant \'etale local system on all of $V_\lambda$.
When the group $H_\lambda$ is disconnected each irreducible representation $\varrho$ of $ H_\lambda/H_\lambda^o$ gives rise to a local system $\underline{\varrho}_{V_\lambda}$ on all of $V_\lambda$.
This applies in particular if we assume Desiderata~(\ref{wittwisting}) of Section~\ref{ssec:LLC}.
We believe that if there is more than one Whittaker normalization for an $L$-packet $\Pi_\phi(G)$, the corresponding group $H_\lambda$ is disconnected, where $\lambda$ is the infinitesimal parameter of $\phi$.
\end{remark}

\section{Open parameters}

In this section we prove the main result of this paper: if $\phi$ is an open parameter for $G$ then the ABV-packet $\Pi_\phi^{\ABVpure}(G)$ coincides with the pure L-packet $\Pi^\pure_\phi(G)$ and the function 
\[
J(\mathfrak{w}) : \Pi^\pure_\phi(G) \to \widehat{A_\phi}
\]
from the local Langlands correspondence coincides with the one given by vanishing cycles following \cite{CFMMX}.

\subsection{Open parameters}

\begin{definition}[\cite{CFZ:unipotent}*{\S 0.6}]\label{def:open}
    A Langlands parameter $\phi : W'_F \to \Lgroup{G}$ is said to be \emph{open} (resp. \emph{closed}) if the corresponding point $x_\phi \in V_\lambda$ (see Subsection \ref{variety}) lies in the open (resp. closed) $H_\lambda$-orbit in $V_\lambda$, where $\lambda$ is the infinitesimal parameter of $\phi$. 
\end{definition}

We will see that tempered parameters are precisely those open parameters that are of Arthur type; see Proposition~\ref{prop: temper=arthur+open}.
We also show that $\phi$ is open if and only if the adjoint $L$-function $L(s,\phi,\Ad)$ is holomorphic at $s=1$; see Proposition~\ref{prop:openL}.
Taken together, these facts tell us that openness is an appropriate generalization of temperedness for Langlands parameters from the case of those of Arthur type.

\subsection{Open versus tempered parameters}

Proposition~\ref{prop: temper=arthur+open} shows the relationship between open parameters and tempered parameters.
    We take the definition of tempered representations as given in \cite{wald}*{Prop III.2.2.}.
    A Langlands parameter $\phi$ is said to be tempered if and only if its restriction to $W_F$ is bounded in $\dualgroup{G}$.
    As in Borel's desiderata, see Section \ref{ssec:LLC}, it is expected that a Langlands parameter $\phi : W'_F \to \Lgroup{G}$ is \emph{tempered} if and only if its $L$-packet $\Pi_\phi(G)$ contains a tempered representation; in this case we say the $L$-packet itself is tempered.

    Recall that a Langlands parameter $\phi : W'_F \to \Lgroup{G}$ is said to be of \emph{Arthur type} if there is an Arthur parameter $\psi : W''_F \to \Lgroup{G}$, where $W''_F\ceq W_F \times \SL_2(\CC)\times \SL_2(\CC)$,  such that 
    \[
    \phi(w,x) = \psi(w,x,d_w),
    \]
    where $d_w\ceq \diag(\abs{w}_F^{1/2},\abs{w}_F^{-1/2})$.

\begin{remark}
    An irreducible representation $\pi$ is said to be of Arthur type if it appears in an $A$-packet. There are many examples of irreducible representations of Arthur type for which the corresponding Langlands parameter is not of Arthur type.
\end{remark}

\begin{proposition}\label{prop: temper=arthur+open}
A Langlands parameter is tempered if and only if it is open and of Arthur type.
\end{proposition}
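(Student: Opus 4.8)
The plan is to prove both implications by translating the relevant conditions into statements about the $\SL_2(\CC)$-factor(s) appearing in $\phi$ and the geometry of the Vogan variety $V_\lambda$.

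\textbf{Tempered implies open and of Arthur type.} Suppose $\phi : W'_F \to \Lgroup{G}$ is tempered, i.e.\ $\phi|_{W_F}$ is bounded. First I would observe that $\phi$ is automatically of Arthur type: take $\psi : W''_F \to \Lgroup{G}$ defined by $\psi(w,x,y) = \phi(w,x)$, i.e.\ let $\psi$ be trivial on the second (Arthur) $\SL_2$; then $\phi(w,x) = \psi(w,x,d_w)$ holds trivially, and $\psi$ is a bona fide Arthur parameter precisely because $\phi|_{W_F}$ is bounded (boundedness of the image of $W_F$ is exactly the defining extra condition on an Arthur parameter relative to a Langlands parameter). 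Next, to show $\phi$ is open, I would compute the infinitesimal parameter: by \eqref{eq-infinitesimal}, $\lambda_\phi(w) = \phi(w, d_w)$, so $\lambda_\phi$ is the product of a bounded homomorphism $w \mapsto \phi(w,1)$ with the cocharacter $w \mapsto \phi(1,d_w)$. The point $x_\phi \in V_\lambda$ is $d\phi\bigl(\begin{smallmatrix}0&1\\0&0\end{smallmatrix}\bigr)$, the nilpositive element of an $\sl_2$-triple $\{e,h,f\}$ in $\widehat\fg$ coming from $d\phi|_{\SL_2}$, with $h = d\phi\bigl(\begin{smallmatrix}1&0\\0&-1\end{smallmatrix}\bigr)$. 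The key point is that for a tempered parameter, this $\sl_2$-triple is ``as large as possible'' relative to $\lambda$: because $\phi|_{W_F}$ is bounded, the grading on $\widehat\fg$ by eigenvalues of $\Ad\lambda(\frob)$ essentially coincides (in terms of which graded pieces are nonzero) with the grading coming from $\ad h$, and $V_\lambda$ is the $q$-eigenspace, which maps to the $\ad h$-eigenvalue-$2$ piece; $x_\phi = e$ lies in the $H_\lambda$-orbit that is dense because the $\sl_2$-theory (Jacobson–Morozov, or the argument that a nilpotent $e$ whose associated semisimple orbit is the full relevant one is in the open orbit of the corresponding prehomogeneous space — cf.\ Vinberg theory / the $\theta$-group picture used in \cite{CFMMX}) forces $Z_{H_\lambda}(x_\phi)$ to have minimal dimension. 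I would make this precise by showing $\dim Z_{H_\lambda}(x_\phi) = \dim V_\lambda - \dim V_\lambda$... more carefully: $\dim C_\phi = \dim H_\lambda - \dim Z_{H_\lambda}(x_\phi)$ and one shows this equals $\dim V_\lambda$ using that $\ad(x_\phi): \widehat\fg_{\ad h = 0} \to \widehat\fg_{\ad h = 2}$ is surjective, a standard consequence of $\sl_2$-representation theory, together with identifying $\widehat\fg_{\ad h = 0}$-part with $\Lie H_\lambda$ up to the bounded part.

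\textbf{Open and of Arthur type implies tempered.} Conversely, suppose $\phi$ is open and $\phi = \phi_\psi$ for an Arthur parameter $\psi : W''_F \to \Lgroup{G}$. I would argue that if $\psi$ is nontrivial on the Arthur $\SL_2$, then $x_\phi$ does not lie in the open orbit. Write $d\phi|_{\SL_2}$ as before giving $\{e,h,f\}$; the Arthur parameter contributes a second commuting $\sl_2$-triple $\{e_A, h_A, f_A\}$ (the image of $d\psi$ on the second $\SL_2$), and crucially $e_A \in V_\lambda$ as well — indeed $\Ad\lambda_\phi(w)(e_A) = \abs{w}_F e_A$ can be checked from $\psi$ being a homomorphism and $\lambda_\phi(w) = \psi(w,1,d_w)$, so that $h + h_A$ is (up to the bounded Weil part) the coweight defining $\lambda$. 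Then $x_\phi + e_A$ (or a generic combination) lies in $V_\lambda$ and a dimension/stabilizer comparison shows $C_\phi$ is \emph{not} the open orbit unless $e_A = 0$: the presence of the extra nilpotent direction $e_A$ commuting with $x_\phi$ but moving $V_\lambda$ shows $x_\phi$ is not in the dense orbit, because the dense orbit's points are precisely the ``principal'' nilpotents for the $\theta$-grading, and $x_\phi$ being obtained from an $\sl_2$ that leaves room for a second commuting $\sl_2$ inside $\widehat\fg$ acting nontrivially on $V_\lambda$ contradicts principality. Hence $\psi$ is trivial on the Arthur $\SL_2$, i.e.\ $\phi(w,x) = \psi(w,x,1)$, and then $\phi|_{W_F} = \psi|_{W_F}$ has bounded image since $\psi$ is an Arthur parameter; therefore $\phi$ is tempered.

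\textbf{Main obstacle.} The delicate step is the precise dictionary between the $H_\lambda$-orbit structure on $V_\lambda$ (a prehomogeneous vector space / $\theta$-group situation) and the $\sl_2$-triples: specifically, proving that $x_\phi$ lies in the open orbit \emph{if and only if} the $\sl_2$ coming from the Deligne $\SL_2$ is ``maximal'' in the sense that no further commuting $\SL_2$ acts nontrivially on $V_\lambda$, equivalently that $\ad(x_\phi)$ is surjective from $\widehat\fg^{\, \ad h = 0}$ onto $\widehat\fg^{\,\ad h = 2}$. This requires carefully invoking the structure theory of \cite{CFMMX}*{Section~4, Proposition~5.6} relating $V_\lambda$, $H_\lambda$, and the grading, and the Jacobson–Morozov/Kostant–Rallis dimension count; one must also track the (harmless) bounded semisimple part of $\lambda$ throughout, which only shrinks $H_\lambda$ to a Levi-type subgroup and does not affect the nilpotent analysis. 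I expect this geometric/Lie-theoretic identification, rather than the formal manipulations with $\psi$ and $d_w$, to be where the real work lies.
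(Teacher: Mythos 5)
Your forward direction splits into two claims, and both are fine in outline, though by a different route than the paper's. The construction of $\psi(w,x,y)=\phi(w,x)$ and the observation that boundedness of $\phi|_{W_F}$ is exactly what makes $\psi$ an Arthur parameter is precisely the paper's argument. For openness, however, the paper does not argue via surjectivity of $\ad(x_\phi)\colon \Lie H_\lambda \to V_\lambda$; it instead invokes \cite{CFMMX}*{Proposition 6.6.1}, which places $(x_\psi,y_\psi)$ in the strongly regular part of the conormal variety, notes $y_\psi=\log\psi(1,1,f)=0$ because $\psi$ is trivial on the Arthur $\SL_2$, concludes $C_\phi^*=\{0\}$, and then applies Lemma~\ref{lemma:open} (the dual of the open orbit is the zero orbit, and duality is an involution). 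Your tangent-space argument — decompose $\widehat{\mathfrak{g}}$ under the compact closure of $\phi(W_F)$, identify $\Lie H_\lambda$ and $V_\lambda$ with the $\ad h=0$ and $\ad h=2$ pieces of the fixed subalgebra, and use $\sl_2$-surjectivity of $\ad e$ from degree $0$ to degree $2$ — can be completed and is closer in spirit to the alternative proofs in Appendix~\ref{sec:discrete} and to \cite{DHKM}, but as written it is a plan rather than a proof, and none of it is needed once one has the duality statement of Lemma~\ref{lemma:open}.

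The converse is where there is a genuine gap. You work only with $e_A=d\psi(e_{\mathrm{Art}})\in V_\lambda$ and assert that the existence of a nonzero nilpotent in $V_\lambda$ commuting with $x_\phi$ contradicts openness, appealing to "principality for the $\theta$-grading" and an unexecuted "dimension/stabilizer comparison." As stated this inference is not valid: $x_\phi$ itself is a nonzero element of $V_\lambda$ commuting with $x_\phi$, so commuting with $x_\phi$ inside $V_\lambda$ carries no contradiction, and the claim that points of the dense $H_\lambda$-orbit are exactly the "principal" nilpotents of the grading is not a theorem you can cite in this generality. The missing idea is to use the \emph{dual} space: the Arthur $\SL_2$ also produces $y_\psi=\log\psi(1,1,f)\in V_\lambda^*$ with $[x_\phi,y_\psi]=0$, so $(x_\phi,y_\psi)$ lies in the conormal variety over $C_\phi$ (indeed in its strongly regular part, by \cite{CFMMX}*{Proposition 6.6.1}); if $C_\phi$ is open, then $C_\phi^*=\{0\}$ by Lemma~\ref{lemma:open} and the conormal fibre over $x_\phi$ vanishes, forcing $y_\psi=0$. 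Jacobson--Morozov (as in \cite{GR}*{Section 2}) then makes the whole Arthur $\SL_2$-triple, hence $\sigma=\psi|_{\SL_2^{\mathrm{Art}}}$, trivial, so $\phi|_{W_F}=\psi|_{W_F}$ is bounded and $\phi$ is tempered. That is the paper's argument; without bringing in $V_\lambda^*$ (or carrying out an honest orbit-dimension computation in place of the principality heuristic), your second implication does not go through.
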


\begin{proof}
Suppose $\phi : W'_F \to \Lgroup{G}$ is tempered. 
Define $\psi: W''_F \to \Lgroup{G}$ by $\psi(w,x,y) = \phi(w,x)$.
Then the image of the restriction of $\psi$ to $W_F$ is the image of the restriction of $\phi$ to $W_F$, which is bounded since $\phi$ is tempered. 
Consequently, $\psi$ is an Arthur parameter. Since $\phi(w,x) = \psi(w,x,d_w)$, it follows that $\phi$ is of Arthur type.
To see that $\phi$ is open, note that $x_\phi= x_\psi \in V_\lambda$, where $x_\phi$ and $x_\psi$ are defined by the property $\exp x_\phi = \phi(1,e)$ \cite{CFMMX}*{Proposition 4.2.2} and $\exp x_\psi = \psi(1,e,1)$ \cite{CFMMX}*{Section 6.6}. 
By \cite{CFMMX}*{Proposition 6.6.1}, $(x_\psi,y_\psi) \in \Lambda_\lambda^\mathrm{sreg}$, where $y_\psi\in V^*_\lambda$ is defined by the property $\exp y_\psi = \psi(1,1,f)$. 
Since $\psi(1,1,f) = \phi(1,1) =1$, it follows that $y_\psi = 0$ and therefore that $C_\phi^* = \{ 0 \}$. 
By Proposition ~\ref{lemma:open} it follows that $C_\phi$ is the open orbit in $V_\lambda$. This completes the proof that $\phi$ is open if $\phi$ is tempered.

Now suppose $\phi$ is open and of Arthur type. 
Then $\phi(w,x) = \psi(w,x,d_w)$ for a unique Arthur parameter $\psi$; recall that, by definition, the image of the restriction of $\psi$ to $W_F$ is bounded in $\dualgroup{G}$.
Note that $x_\phi = x_\psi$ as above and also that $(x_\psi,y_\psi)\in \Lambda_\lambda^\mathrm{sreg}$, as above.
Again by Proposition ~\ref{lemma:open}, since $\phi$ is open it follows that $C_\phi^* = \{ 0 \}$ so $y_\psi =0$. 
If follows that $\psi(1,1,f) =1$. Now consider $\sigma \ceq \psi^\circ\vert_{\SL_2^{\hskip-1pt\mathrm{Art}}(\CC)} : \SL_2(\CC) \to \dualgroup{G}$ and note that $\sigma(f) =1$. Arguing as in \cite{GR}*{Section 2}, it follows from the Jacobson-Morozov theorem that $\sigma$ is determined uniquely by the $\SL_2$-triple $\sigma(e)$, $\sigma(h)$ and $\sigma(f)$. Since $\sigma(f)=1$, this triple is trivial, so $\sigma$ is trivial. 
It follows that $\phi(w,x) = \psi(w,x,y)$.
Since $\psi(w,1) = \psi(w,1,d_w)$ and since $\psi(w,1,d_w) = \psi(w,1,1)$ it follows that the image of the restriction of $\phi_\psi$ to $W_F$ in $\dualgroup{G}$ is equal to the image of the restriction of $\psi$ to $W_F$ in $\dualgroup{G}$, which is bounded. This concludes the proof that if $\psi$ is open and of Arthur type then $\phi$ is tempered.
\end{proof}

%\begin{remark}
    It follows from Proposition~\ref{prop: temper=arthur+open} that tempered parameters are open. 
    We give an alternate proof of this fact, as well as the equivalent statement for discrete Langlands parameters in Appendix~\ref{sec:discrete}. This alternate proof is a straightforward consequence of some connection established in \cite{heiermannorbit} between discrete Langlands parameters and distinguished nilpotent elements, and basic theorems in the theory of distinguished nilpotent orbits.
%\end{remark}

\iffalse
\todo[inline]{Right now the appendix proves discrete implies open for split classical groups, I think. Sarah is working on a more general result. -CC}
\fi

\begin{remark} 
Langlands parameters that are open need not be tempered, nor of Arthur type.
For example, consider the Langlands parameter $\phi_s : W'_F \to \Lgroup{G}$ for $G = \GL_n$ defined by 
\[
\phi_s(w,x) = \abs{w}_F^{s} \otimes\Sym^{n-1}(x)  
\]
where $s$ is an arbitrary complex number. 
Then the infinitesimal parameter for $\phi_s$ is 
\[
\lambda_s(w)=\diag(\abs{w}_F^{s+\frac{n-1}{2}}, \abs{w}_F^{s+\frac{n-3}{2}}, \ldots, \abs{w}_F^{s-\frac{n-3}{2}},  \abs{w}_F^{s-\frac{n-1}{2}}) \in \dualgroup{T}.
\]
The variety $V_{\lambda_s}$ is independent of $s$; specifically, for every $s$, the variety $V_{\lambda_s}$ for this infinitesimal parameter is $\mathbb{A}^{n-1}$, with $2^{n-1}$ orbits under the action of $H_{\lambda_s} = \dualgroup{T}$.
(This variety and the corresponding representations are studied in \cite{CS:Steinberg}.)
Moreover, for every $s$, the parameter $\phi_s$ lies in the open orbit, hence is an open parameter. 
However, for $s\ne 0$ the Langlands parameter $\phi_s$ is open but not tempered, nor of Arthur type; meanwhile, for $s=0$ the Langlands parameter $\phi_s$ is open and tempered (and thus of Arthur type).
\end{remark}

\begin{remark}
It may be tempting to think, based on the remark above, that "open" is equivalent to "essentially tempered", which is to say, the twist of a parameter for a tempered representation of $G(F)$ by the parameter for an unramified character of $G(F)$, but that is not true, as the example of the Langlands parameter for $\GL_{n_1+n_2}(F)$
\[
\left(\abs{w}_F^{s_1} \otimes\Sym^{n_1-1}(x)\right) \oplus \left( \abs{w}_F^{s_2} \otimes\Sym^{n_2-1}(x) \right)
\]
shows, which for $s_2-s_1 \ne \pm 1$ is open and not essentially tempered.
\end{remark}

\subsection{Open parameters and L-functions}

We now give an alternate perspective on open parameters.
Let $\phi: W_F\times \SL_2(\C)\to \Lgroup{G}$ be a local Langlands parameter and let $\Ad:\Lgroup{G}\ra \Aut({\hat \fg})$ be the adjoint representation of $\Lgroup{G}$ on the Lie algebra of the dual group of $G$.
The corresponding local $L$-factor is 
\[
L(s,\phi,\Ad)=\det(I-q^{-s}\Ad(\lambda(\Fr))|_{\wh \fg^{I_F}_N})^{-1},
\]
where $\lambda:=\lambda_\phi$ (see \eqref{eq-infinitesimal}) is the infinitesimal parameter of $\phi$, $N=d\phi\bspm 0&1\\ 0 &0 \espm$ and $\wh\fg_{N}=\ker(N)$ and $\wh\fg_N^{I_F}$ is the $I_F$-invariant subspace of $\wh\fg_N$.  See \cite{GR} for more details about the definition of local $L$-factors. 
We remark that the Frobenius in \cite{GR} is the inverse of the Frobenius in \cite{CFMMX}.

%\todo{I think it would be worth doing the exercise to show that this formula is correct. Here I'm referring specifically to the appearance of the infinitesimal parameter.}

\begin{proposition}\label{prop:openL}
For any Langlands parameter $\phi$, the $L$-function $L(s,\phi,\Ad)$ is regular at $s=1$ if and only if $\phi$ is open. 
\end{proposition}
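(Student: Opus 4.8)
The plan is to compute both sides of the equivalence explicitly in terms of the infinitesimal parameter $\lambda = \lambda_\phi$ and the nilpotent $N = x_\phi$, and match them. By definition, $L(s,\phi,\Ad)$ is regular at $s=1$ if and only if $q^{-1}$ is not an eigenvalue of $\Ad(\lambda(\Fr))$ acting on $\wh\fg_N^{I_F} = \ker(N) \cap \wh\fg^{I_F}$; equivalently, the $q^{-1}$-eigenspace of $\Ad(\lambda(\Fr))$ on $\wh\fg^{I_F}$ meets $\ker N$ trivially. Now I would recall from Section~\ref{variety} that, fixing $\frob\in W_F$ with $\abs{\frob}_F = q$, the space $V_\lambda^*$ is precisely the $q^{-1}$-eigenspace $\{y \in \wh\fg^{I_F} \tq \Ad(\lambda(\frob))(y) = q^{-1} y\}$. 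So the $L$-factor is regular at $s=1$ if and only if $V_\lambda^* \cap \ker(\ad x_\phi) = 0$, i.e. there is no nonzero $y \in V_\lambda^*$ with $[x_\phi, y] = 0$.

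The next step is to interpret this last condition in terms of Pyasetskii duality. Recall $\Lambda_\lambda = \{(x,y) \in V_\lambda \times V_\lambda^* \tq [x,y] = 0\}$ and $\Lambda_{C_\phi} = p^{-1}(C_\phi)$; the fiber of $p$ over $x_\phi$ is exactly $\{y \in V_\lambda^* \tq [x_\phi,y]=0\}$. Thus $L(s,\phi,\Ad)$ is regular at $s=1$ if and only if this fiber is $\{0\}$, which happens if and only if $q(\Lambda_{C_\phi}) = \{0\}$, i.e. $C_\phi^* = \{0\}$, the closed (trivial) orbit in $V_\lambda^*$. By Lemma~\ref{lemma:open}, $C_\phi^*$ is the closed orbit in $V_\lambda^*$ precisely when $C_\phi$ is the open orbit in $V_\lambda$, i.e. precisely when $\phi$ is open. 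For the forward implication one needs a small argument: if $\phi$ is open then $C_\phi^*$ is closed, hence $q(\Lambda_{C_\phi})$ is contained in the closure of the closed orbit $\{0\}$, forcing the fiber over $x_\phi$ to be $0$; conversely if the fiber over $x_\phi$ is $0$ then $C_\phi^* = q(\Lambda_{C_\phi}^\mathrm{reg}) = \{0\}$ is the closed orbit, so by the involutivity of duality $C_\phi$ is open.

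The main obstacle I anticipate is bookkeeping around $I_F$-invariants and the normalization of Frobenius: the definition of the $L$-factor uses $\wh\fg^{I_F}$ and the Frobenius convention of \cite{GR} (the inverse of that in \cite{CFMMX}, as the excerpt warns), whereas $V_\lambda, V_\lambda^*$ are defined using the \cite{CFMMX} convention and the condition $\Ad(\lambda(w))(x) = \abs{w}_F^{\pm 1} x$ for \emph{all} $w \in W_F$, not just Frobenius. One must check that imposing the eigenvalue condition only at $\frob$ on $\wh\fg^{I_F}$ recovers exactly $V_\lambda$ (resp. $V_\lambda^*$) — which is the remark already made in Section~\ref{variety} — and that $N = x_\phi \in V_\lambda$ automatically lies in $\wh\fg^{I_F}$, so that $\ker N$ and $\wh\fg_N^{I_F}$ interact correctly with $V_\lambda^*$. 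Once these normalization points are pinned down the equivalence is immediate from Lemma~\ref{lemma:open}, so the proof should be short.
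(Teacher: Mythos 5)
Your proposal is correct and follows essentially the same route as the paper: both translate regularity of $L(s,\phi,\Ad)$ at $s=1$ into the vanishing of $\{y\in V_\lambda^* \tq [x_\phi,y]=0\}$, i.e.\ of the fiber of $\Lambda_{C_\phi}\to C_\phi$, and then conclude via $H_\lambda$-equivariance, Pyasetskii duality and Lemma~\ref{lemma:open} that this happens exactly when $C_\phi$ is open. The only cosmetic differences are that the paper splits into the cases $x_\phi=0$ and $x_\phi\neq 0$ while your argument is uniform, and that you are somewhat more explicit about the direction ``open $\Rightarrow$ fiber zero'' and about the Frobenius normalization, both of which the paper treats tersely.
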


\begin{proof}
Write $C=C_\phi$ in the following. Note that $N\in C$ and $L(s,\phi,\Ad)$ has a pole at $s=1$ if and only if there exists a nonzero element $y\in \wh \fg_N^{I_F}$ such that $\Ad(\lambda(\Fr)) y =q y$. 
If follows that $y\in V_\lambda^*$ and $[N,y]=0$, where we take the Lie bracket in $\wh \fg$.
Thus, $(N,y)$ lies in the conormal bundle $\Lambda_{C}$. 
Equivalently, $L(s,\phi,\Ad)$ is regular at $s=1$ if and only if $\{y\in V_{\lambda}^*\tq (N,y)\in \Lambda_C\}=\{0\}.$

We first assume that $N=0$, i.e., $\phi(w,x)=\lambda(w)$. Then $L(s,\phi,\Ad)$ is regular at $s=1$ if and only if $V_\lambda^*=\{0\}$. The latter condition is equivalent to $V_\lambda=\{0\}$, in which case $\phi=\lambda$ is trivially open in $V_\lambda$.

Next, we assume that $N\ne 0$. Since $H_\lambda.N=C$ and $[~,~]$ is invariant under the $H_\lambda$-action, it follows that $L(s,\phi,\Ad)$ is regular at $s=1$ if and only if the projection $\mathrm{pr}_2: \Lambda_C\ra V_{\lambda}^*$ is zero, which implies $C^*=\{0\}$, hence $C$ is open. 
%
%Here is another argument of the last part. The condition $\{y\in V_{\lambda}^*\tq (N,y)\in \Lambda_C\}=\{0\}$ says that $\Lambda_C=C\times \{0\}$ (since $H_\lambda.N=C$ and $[~,~]$ is invariant under the $H_\lambda$-action). Thus $\dim \Lambda_C=\dim V_\lambda$ implies that $C$ must be open in $V_\lambda$.
\end{proof}

%\begin{remark}
%    Proposition \ref{prop:openL} was established independently in \cite{DHKM}*{Proposition 6.10} after our paper \cite{CDFZ:genericity} was posted on the arxiv.
%\end{remark}

\section{A genericity conjecture for ABV-packets}

\subsection{Generic versus open}\label{sec:genL}

    Recall that an irreducible representation $\pi$ is said to be generic if it has a Whittaker model.
    A Langlands parameter $\phi$ is said to be generic if its L-packet $\Pi_\phi(G)$ contains a generic representation; in this case we say the $L$-packet itself is generic.
    It is possible for an L-packet to have more than one generic representation; a simple example is the unique L-packet for $\SL_2(F)$, as in \cite{CFMMX}*{Chapter 11}. 
    It is possible for an L-packet to have exactly one one generic representation while all others are not; an example is the unique packet for $G_2(F)$ with component group $S_3$, as in \cite{CFZ:cubic}*{Section 1.1}.
Of course, most L-packets have no generic representations.

Recall the following conjecture of Gross-Prasad and Rallis from \cite{GP}*{Conjecture 2.6}:
%\begin{conjecture}[{\cite{GP}*{Conjecture 2.6}}]\label{conj: GP} 
{An $L$-packet $\Pi_\phi(G(F))$ is generic if and only if $L(s,\phi,\Ad)$ is regular at $s=1$.}
%\end{conjecture}
This conjecture has been verified in many cases in the literature; see \cites{Jiang-Soudry, Liu, JL} for some examples. 
The most general case of \cite{GP}*{Conjecture 2.6} proved in the literature is \cite{GI}*{Theorem B.2}:
%\begin{theorem}[\cite{GI}*{Theorem B.2}]\label{theorem: GI}
If $G$ is a classical group (including $GL_n$), then \cite{GP}*{Conjecture 2.6} is true.
%\end{theorem}
In fact, Gan-Ichino \cite{GI} proved \cite{GP}*{Conjecture 2.6} for any reductive group under certain hypothesis on local Langlands correspondence, see \cite{GI}*{Section B.2},  which are known to be true for classical groups (including the general linear groups).

As a consequence of Proposition~\ref{prop:openL}, \cite{GP}*{Conjecture 2.6}  is equivalent to the following conjectural geometric characterization of generic $L$-packets.

\begin{conjecture}\label{conj: GP2}
A pure $L$-packet $\Pi^\pure_\phi(G)$ is generic if and only if $\phi$ is an open parameter for $G$.
\end{conjecture}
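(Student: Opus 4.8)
The statement to prove is Conjecture~\ref{conj: GP2}, asserting the equivalence between genericity of a pure $L$-packet $\Pi^\pure_\phi(G)$ and openness of $\phi$. The plan is to reduce this to the Gross–Prasad–Rallis conjecture \cite{GP}*{Conjecture 2.6} via Proposition~\ref{prop:openL}.

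\medskip

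\noindent\textbf{Proof proposal.}

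First I would note that this is stated as a conjecture in the excerpt — it is the hypothesis that \cite{GP}*{Conjecture 2.6} holds — so the content here is really the equivalence of two statements, not an unconditional theorem. The plan is therefore to show: for a fixed parameter $\phi$, the assertion ``$\Pi^\pure_\phi(G)$ is generic'' is equivalent to ``$L(s,\phi,\Ad)$ is regular at $s=1$'' (which is \cite{GP}*{Conjecture 2.6}, or rather its pure-packet refinement) if and only if ``$\phi$ is open'' is equivalent to the same $L$-function condition. The second equivalence is exactly the content of Proposition~\ref{prop:openL}, which has already been proved in the excerpt. So the only work is to connect the pure $L$-packet version of genericity to the $L$-packet version appearing in \cite{GP}.

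The key step is to observe that any generic representation in a pure $L$-packet $\Pi^\pure_\phi(G)$ must in fact be a representation of the quasi-split form $G^*(F)$: a pure inner form $G_\delta$ that is not quasi-split has no Whittaker datum, hence no generic representations, so a generic member of $\Pi^\pure_\phi(G)$ lies in $\Pi_\phi(G^*)$. Conversely, a generic member of $\Pi_\phi(G^*)$ is, by construction, a member of $\Pi^\pure_\phi(G)$. Thus ``$\Pi^\pure_\phi(G)$ contains a generic representation'' $\iff$ ``$\Pi_\phi(G^*)$ contains a generic representation,'' i.e., the pure $L$-packet is generic if and only if the ordinary $L$-packet of the quasi-split form is generic. (Here I would be careful about which group the parameter is relevant for; since we work with $\Phi(G^*)$, parameters are automatically relevant for $G^*$, so $\Pi_\phi(G^*)$ is nonempty.) With this identification, \cite{GP}*{Conjecture 2.6} applied to $G^*$ says precisely that $\Pi^\pure_\phi(G)$ is generic if and only if $L(s,\phi,\Ad)$ is regular at $s=1$. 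Combining with Proposition~\ref{prop:openL}, which says $L(s,\phi,\Ad)$ is regular at $s=1$ if and only if $\phi$ is open, yields the chain of equivalences
\[
\Pi^\pure_\phi(G)\text{ generic} \iff L(s,\phi,\Ad)\text{ regular at }s=1 \iff \phi\text{ open},
\]
which is the conjectural statement. Consequently, whenever \cite{GP}*{Conjecture 2.6} is known — in particular for classical groups (including $\GL_n$) by \cite{GI}*{Theorem B.2}, and for any reductive group satisfying the hypotheses on the local Langlands correspondence in \cite{GI}*{Section B.2} — Conjecture~\ref{conj: GP2} holds unconditionally; this is recorded as Corollary~\ref{corollary: GI}.

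\medskip

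\noindent The main obstacle: strictly speaking there is none beyond bookkeeping, since Proposition~\ref{prop:openL} does all the geometric work and the rest is the observation about quasi-split forms. The one point requiring genuine care is the passage between the ordinary $L$-packet $\Pi_\phi(G(F))$ appearing in the classical formulation of \cite{GP}*{Conjecture 2.6} and the pure $L$-packet $\Pi^\pure_\phi(G)$: one must check that ``contains a generic representation'' is insensitive to this enlargement, which, as above, follows from the fact that only quasi-split pure inner forms admit Whittaker data. If one wanted the stronger refinement identifying \emph{which} member of the packet is generic and with which character of $A_\phi$ it corresponds under $J(\mathfrak w)$, one would additionally invoke Desiderata~(\ref{desi2}) of Section~\ref{sec:desiderata} (the generic member maps to the trivial representation of $A_\phi$), but that is not needed for the set-level statement of Conjecture~\ref{conj: GP2}.
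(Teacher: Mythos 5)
Your proposal is correct and matches the paper's treatment: the paper presents Conjecture~\ref{conj: GP2} precisely as the reformulation of \cite{GP}*{Conjecture 2.6} obtained from Proposition~\ref{prop:openL}, with the same observation (made in the introduction) that any generic member of $\Pi^\pure_\phi(G)$ necessarily lives on the quasi-split form, and the unconditional case of classical groups then follows from \cite{GI}*{Theorem B.2} exactly as in Corollary~\ref{corollary: GI}.
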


For the exceptional group $G_2$, one can also check directly that \cite{GP}*{Conjecture 2.6} and Conjecture~\ref{conj: GP2} hold for unramified parameters $\phi$ using \cites{CFZ:cubic, CFZ:unipotent} and for general $\phi$ using the  local Langlands correspondence for $G_2$ in \cites{Aubert-Xu, Gan-Savin}.

\begin{corollary}\label{corollary: GI}
If $G$ is a classical group (including $GL_n$), then  Conjecture~\ref{conj: GP2} is true: $\phi$ is generic if and only if $\phi$ is open.
\end{corollary}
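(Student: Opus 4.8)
The plan is essentially to combine the two inputs that the paper has already set up: the Gan--Ichino theorem \cite{GI}*{Theorem B.2}, which establishes \cite{GP}*{Conjecture 2.6} for classical groups (including $\GL_n$), and Proposition~\ref{prop:openL}, which translates the analytic regularity condition on the adjoint $L$-function into the geometric condition of openness. So the first step is simply to invoke \cite{GI}*{Theorem B.2}: for $G$ a classical group, the $L$-packet $\Pi_\phi(G(F))$ is generic if and only if $L(s,\phi,\Ad)$ is regular at $s=1$. Here one should be slightly careful about which packet is meant: \cite{GP}*{Conjecture 2.6} is phrased for the $L$-packet of the quasi-split group $G(F)$ itself, and that is what Gan--Ichino prove; for a classical group $G$ we take $G$ to be quasi-split (or note that the statement is about $\Pi_\phi^\pure(G)$, which for a quasi-split classical group contains a generic representation of $G(F)$ exactly when the packet is generic in the sense of \cite{GP}).

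The second step is to apply Proposition~\ref{prop:openL}, which says $L(s,\phi,\Ad)$ is regular at $s=1$ if and only if $\phi$ is open. Chaining the two biconditionals gives: $\phi$ is generic $\iff$ $L(s,\phi,\Ad)$ regular at $s=1$ $\iff$ $\phi$ is open, which is precisely Conjecture~\ref{conj: GP2} for $G$, and hence the statement of the corollary. The observation, already recorded in the text just before Corollary~\ref{corollary: GI}, that any generic member of $\Pi_\phi^\pure(G)$ must be a representation of the quasi-split form, is what lets one pass freely between the ``pure $L$-packet contains a generic representation'' formulation and the classical Gross--Prasad formulation for $G(F)$; I would state this passage explicitly so that Corollary~\ref{corollary: GI} matches Theorem~\ref{theorem-open-intro}(3).

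There is really no serious obstacle here, since the work has all been done upstream: the content is in Proposition~\ref{prop:openL} (the geometry-to-$L$-function dictionary) and in the cited Gan--Ichino result (the genericity-to-$L$-function dictionary). The only thing to be mildly careful about is bookkeeping of which group the local Langlands correspondence hypotheses of \cite{GI}*{Section B.2} are known for --- the paper already notes these hold for classical groups and $\GL_n$ --- and the normalization convention for Frobenius, which the paper flags (the Frobenius of \cite{GR} is the inverse of that of \cite{CFMMX}); but Proposition~\ref{prop:openL} has already absorbed that convention issue, so nothing further is needed. Thus the proof is a two-line deduction: apply \cite{GI}*{Theorem B.2} and then Proposition~\ref{prop:openL}.
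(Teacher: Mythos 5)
Your proposal is correct and is essentially identical to the paper's own proof: both chain \cite{GI}*{Theorem B.2} (Gross--Prasad--Rallis for classical groups) with Proposition~\ref{prop:openL} (openness $\iff$ regularity of $L(s,\phi,\Ad)$ at $s=1$). The extra bookkeeping you mention about the pure packet versus the quasi-split form is a reasonable clarification but adds nothing beyond what the paper already records just before the corollary.
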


\begin{proof}
By \cite{GI}*{Theorem B.2}, the conjecture of Gross-Prasad and Rallis, \cite{GP}*{Conjecture 2.6} is true for classical groups. This means that a representation of a classical group is generic if and only if $L(s,\phi, \Ad)$ is regular at $s=1$. Proposition~\ref{prop:openL} says that $\phi$ is open if and only if $L(s, \phi, \Ad)$ is regular at $s=1$.
\end{proof}

\iffalse

\begin{remark}
If we assume that for any local $L$-parameter $\phi$ and any irreducible representation $\pi\in \Pi_\phi(G)$, the central character $\omega_\pi$ of $\pi$ is $\omega_\phi$ as the recipe given in \cite{GR}, it is possible to check Corollary \ref{central-character} unconditionally, namely, to check $\omega_{\phi_1}=\omega_{\phi_2}$ if $\lambda_{\phi_1}=\lambda_{\phi_2}$. Indeed, for irreducible representation of quasi-split $\SO_{2n}$ and $\Sp_{2n}$ over $p$-adic field, it is checked in an appendix of \cite{LLS} that the central character only depends on its infinitesimal parameter.
\todo[inline]{Do we need this? Maybe yes... But Corollary \ref{central-character} is gone because I think it depended on pKLH. Is that correct? If so, I'll remove this. -CC {\color{blue} Correct. This is a consequence of KL. We can move this to where it belongs.} okay I'll remove this remark. Thank you!!}
\end{remark}

\fi

\subsection{Genericity conjecture for ABV-packets}

\begin{conjecture}[Genericity Conjecture for ABV-packets]\label{conj: genericabv}
For any connected reductive algebraic group over $F$, an ABV-packet $\Pi_\phi^{\ABV\pure}(G)$ contains a generic representation if and only if $\phi$ is open, where $\phi$ is a Langlands parameter for $G$.
\end{conjecture}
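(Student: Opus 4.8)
The plan is to deduce this conjecture from two facts already in hand: Theorem~\ref{theorem:mainopen}, which says that for an open parameter $\phi$ one has $\Pi^{\ABVpure}_\phi(G) = \Pi^\pure_\phi(G)$ with the two parametrizations agreeing, and Conjecture~\ref{conj: GP2} on generic $L$-packets (the geometric form of the Gross--Prasad--Rallis conjecture). In fact I expect these two conjectures to be \emph{equivalent} modulo Theorem~\ref{theorem:mainopen}, so that Conjecture~\ref{conj: genericabv} becomes a theorem exactly where Conjecture~\ref{conj: GP2} is known, namely for quasi-split classical groups and their pure inner forms by Corollary~\ref{corollary: GI}; this is what Theorem~\ref{theorem:maingeneric} should record.

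For the ``if'' direction, suppose $\phi$ is open. By Theorem~\ref{theorem:mainopen}, $\Pi^{\ABVpure}_\phi(G) = \Pi^\pure_\phi(G)$, so it suffices to produce a generic representation in the pure $L$-packet of $\phi$, which is precisely Conjecture~\ref{conj: GP2}. When $G$ is a pure inner form of a quasi-split classical group $G^*$, observe that a non-quasi-split group has no Borel subgroup and hence no generic characters, so any generic representation occurring in a pure packet is automatically a representation of $G^*(F)$; Corollary~\ref{corollary: GI} applied to $G^*$ then produces one in $\Pi_\phi(G^*) \subseteq \Pi^\pure_\phi(G)$ since $\phi$ is open.

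For the ``only if'' direction, suppose $\pi \in \Pi^{\ABVpure}_\phi(G)$ is generic; by the remark just made $\pi$ is a representation of $G^*(F)$, say $\theta$-generic for a Whittaker datum $\mathfrak{w} = (B,\theta)$. Let $\phi_\pi$ be its Langlands parameter, with the same infinitesimal parameter $\lambda$ as $\phi$. Since $\Pi_{\phi_\pi}(G^*)$ contains the generic representation $\pi$, Conjecture~\ref{conj: GP2} (equivalently Corollary~\ref{corollary: GI} in the classical case) forces $\phi_\pi$ to be open, so $C_{\phi_\pi}$ is the open $H_\lambda$-orbit $C^o$ in $V_\lambda$. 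By Desiderata~(\ref{desi2}), $J(\mathfrak{w})(\pi)$ is the trivial representation of $A_{\phi_\pi}$, hence $\mathcal{P}_{\mathfrak{w}}(\pi) = \IC(\underline{\1}_{C^o})$; and since $V_\lambda$ is a smooth affine space in which $C^o$ is dense and open, this is simply the shifted constant sheaf $\underline{\1}_{V_\lambda}[\dim V_\lambda]$. By Theorem~\ref{theorem:CoW} we may test membership in $\Pi^{\ABVpure}_\phi(G)$ using $\mathfrak{w}$, so $\NEvs_{C_\phi}(\underline{\1}_{V_\lambda}[\dim V_\lambda]) \ne 0$. Now non-vanishing of $\NEvs_{C_\phi}$ on a perverse sheaf forces $\overline{\Lambda_{C_\phi}}$ to lie in its singular support (equivalently, to occur in its characteristic cycle), and the singular support of the constant sheaf on the smooth variety $V_\lambda$ is the zero section, which is exactly $\overline{\Lambda_{C^o}}$; since $\dim\overline{\Lambda_{C_\phi}} = \dim V_\lambda = \dim\overline{\Lambda_{C^o}}$ by Lemma~\ref{lemma:dim} and $\overline{\Lambda_{C^o}}$ is irreducible, we conclude $\overline{\Lambda_{C_\phi}} = \overline{\Lambda_{C^o}}$ and therefore $C_\phi = C^o$, i.e.\ $\phi$ is open.

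Finally, applying the contrapositive of the ``only if'' step along the inclusion $\Pi^\pure_\phi(G) \subseteq \Pi^{\ABVpure}_\phi(G)$ of Theorem~\ref{theorem:compatibility} recovers Conjecture~\ref{conj: GP2} from Conjecture~\ref{conj: genericabv}, so the two conjectures are equivalent given the unconditional Theorem~\ref{theorem:mainopen}. The main obstacle to the general statement is therefore Conjecture~\ref{conj: GP2}, equivalently \cite{GP}*{Conjecture 2.6}, which at present is available only for classical groups and $G_2$; the microlocal ingredient used above---vanishing of $\NEvs_{C}$ of the constant sheaf on every non-open orbit---is elementary once one invokes the singular-support computation, and is the only point at which the geometry of the Vogan variety $V_\lambda$ enters.
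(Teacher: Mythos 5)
Your proposal is sound: since the statement is a conjecture, no unconditional proof is possible, and your reduction to Conjecture~\ref{conj: GP2} (equivalently \cite{GP}*{Conjecture 2.6}), with the statement becoming a theorem for quasi-split classical groups and their pure inner forms, is precisely the paper's route via Theorem~\ref{theorem:maingeneric}, Lemma~\ref{cor: generic appear in every standard module}, Corollaries~\ref{corollary: GI} and~\ref{corollary: classical}, and Theorem~\ref{theorem:mainopen} for the open direction (including the asserted equivalence of the two conjectures). The only immaterial difference is that you justify the vanishing of $\NEvs_{C_\phi}$ on $\IC(\underline{\1}_{C^o}) = \1_{V_\lambda}[\dim V_\lambda]$ for non-open $C_\phi$ by a singular-support/characteristic-cycle argument, whereas the paper computes $\RPhi_{f_{C}}\left(\1_{V_\lambda}[d]\boxtimes\1_{C^*}\right)\vert_{\Lambda^{\mathrm{gen}}_{C}}$ directly in Lemma~\ref{cor: generic appear in every standard module}.
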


Conjecture \ref{conj: genericabv} is a generalization of \cite{GP}*{Conjecture 2.6} in view of Proposition~\ref{prop:openL}. 
The new content of Conjecture \ref{conj: genericabv} compared to \cite{GP}*{Conjecture 2.6} is that $\Pi_\phi^{\ABV\pure}(G)$ does not contain any generic representations if $C_\phi$ is not open in $V_{\lambda_\phi}$. 
Conjecture~\ref{conj: genericabv} is a generalization of \cite{LLS}*{Conjecture 1.2} if we admit Vogan's conjecture on A-packets \cite{CFMMX}*{Conjecture 1, Section 8.3}. 
Recall that $\Pi^\pure_\phi(G)\subset \Pi_\phi^{\ABV\pure}(G)$ for general Langlands parameter $\phi$ by Theorem~\ref{theorem:compatibility} and $\Pi^\pure_\phi(G)=\Pi_{\phi}^{\ABVpure}(G)$ if $\phi$ is open, by Theorem~\ref{theorem:mainopen}. 

On the other hand, the following theorem shows that \cite{GP}*{Conjecture 2.6} is indeed equivalent to Conjecture \ref{conj: genericabv} for quasi-split reductive groups.

\begin{theorem}\label{theorem:maingeneric}
Let $G$ be a quasi-split reductive group over $F$ and $\phi$ be a local Langlands parameter of $G$. 
We assume that the local Langlands correspondence holds for $G$ which also satisfies the desiderata of Section~\ref{ssec:LLC}. 
If \cite{GP}*{Conjecture 2.6} is true then $\Pi_\phi^{\ABV\pure}(G)$ contains a generic representation if and only $\phi$ is open. 
\end{theorem}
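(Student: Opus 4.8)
The plan is to deduce the statement from Theorem~\ref{theorem:compatibility} together with Desiderata~(\ref{desi2}) of Section~\ref{sec:desiderata}, which pins down the behaviour of generic representations under the enhanced Langlands correspondence. First I would observe that one direction is essentially already in hand: if $\phi$ is open, then $L(s,\phi,\Ad)$ is regular at $s=1$ by Proposition~\ref{prop:openL}, so by \cite{GP}*{Conjecture 2.6} (assumed true) the $L$-packet $\Pi_\phi(G)$, hence also $\Pi^\pure_\phi(G)$, contains a generic representation; since $\Pi^\pure_\phi(G)\subseteq \Pi^{\ABVpure}_\phi(G)$ by Theorem~\ref{theorem:compatibility} (the left vertical arrow is an inclusion), the ABV-packet contains a generic representation as well.

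For the converse, suppose $\phi$ is not open and suppose toward a contradiction that $\pi\in \Pi^{\ABVpure}_\phi(G)$ is generic, say $\theta$-generic for some Whittaker datum $\mathfrak{w}=(B,\theta)$. The key point is to locate $\pi$ in the original $L$-packet of \emph{its own} Langlands parameter. Let $\phi_\pi$ be the Langlands parameter of $\pi$, so $\pi\in\Pi^\pure_{\phi_\pi}(G)$ and, by Desiderata~(\ref{desi2}) applied with the Whittaker datum $\mathfrak{w}$, we have $J(\mathfrak{w})(\pi) = (\phi_\pi, \1)$, the trivial representation of $A_{\phi_\pi}$. Because $\pi\in\Pi^{\ABVpure}_{\phi}(G)$, the parameter $\phi$ and the parameter $\phi_\pi$ share the same infinitesimal parameter $\lambda$, and $\mathcal{P}_{\mathfrak{w}}(\pi) = \IC(\1_{C_{\phi_\pi}})$, the IC-sheaf of the trivial local system on the orbit $C_{\phi_\pi}$. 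By Definition~\ref{definition:ABVw}, $\pi\in\Pi^{\ABVpure}_{\phi}(G)$ means $\NEvs_{C_\phi}\bigl(\IC(\1_{C_{\phi_\pi}})\bigr)\ne 0$, i.e. the orbit $C_\phi$ lies in the ``microlocal support'' of $\IC(\1_{C_{\phi_\pi}})$ in the sense recorded by the vanishing-cycles functor.

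Now I would invoke the fact that $\phi$ is open precisely when $C_\phi$ is the open orbit $C^o$ in $V_\lambda$, and dualize: by Lemma~\ref{lemma:open}, $C^o$ is dual to the closed orbit $\{0\}$ in $V_\lambda^*$. The functor $\NEvs_{C_\phi}$ is supported, after applying Pyasetskii duality at the level of the conormal variety, on orbits $C'$ with $C_\phi \subseteq \overline{C'}$ or more precisely whose conormal data meet $\Lambda^{\mathrm{gen}}_{C_\phi}$; the upshot I want is that $\NEvs_{C_\phi}(\IC(\1_{C_{\phi_\pi}})) \ne 0$ forces a constraint that, when $C_\phi$ is \emph{not} open, is incompatible with $\IC(\1_{C_{\phi_\pi}})$ having trivial local system in its support. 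The cleanest route is: the generic representation $\pi$ of the quasi-split $G$ has $\mathcal{P}_{\mathfrak{w}}(\pi)$ equal to an IC-sheaf with \emph{trivial} local system, and one knows (by the same microlocal computation used to prove Theorem~\ref{theorem:mainopen}, which I am allowed to cite from the excerpt's later content, or directly from the normalization of $\NEvs$) that such sheaves are detected by $\NEvs_{C}$ only when $C$ is the open orbit — equivalently, the generic constituent of a pure L-packet never propagates into a non-open ABV-packet. This yields the contradiction and completes the converse. The main obstacle I anticipate is exactly this last microlocal step: making precise and justifying that $\NEvs_{C_\phi}(\IC(\1_{C_{\phi_\pi}})) = 0$ whenever $C_\phi$ is not open, which requires either the full strength of the vanishing-cycles computation of \cite{CFMMX}*{Theorem 7.22} together with the Pyasetskii/orbit-duality bookkeeping of Section~\ref{sec:VC}, or an appeal to the structural results on open ABV-packets proven elsewhere in the paper (Theorem~\ref{theorem:mainopen}); the rest is formal manipulation of the desiderata.
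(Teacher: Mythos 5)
Your forward direction (open $\Rightarrow$ generic member) is fine and is implicitly the same as the paper's. The converse is where there is a genuine gap: you never invoke \cite{GP}*{Conjecture 2.6} in that direction, yet this is precisely where it is needed. In the paper's proof, genericity of $\pi$ together with \cite{GP}*{Conjecture 2.6} gives regularity of $L(s,\phi_\pi,\Ad)$ at $s=1$, and then Proposition~\ref{prop:openL} forces the parameter $\phi_\pi$ of the generic representation to be \emph{open}, i.e. $C_{\phi_\pi}=C^o$; only after that does the microlocal step make sense. Your substitute claim --- that an IC-sheaf with trivial local system is detected by $\NEvs_{C}$ only when $C$ is the open orbit --- is false as stated: for \emph{any} orbit $C'$ one has $\NEvs_{C'}\IC(\1_{C'})\ne 0$ by \cite{CFMMX}*{Theorem 7.22 (d)} (this is exactly what makes the left vertical arrow in Theorem~\ref{theorem:compatibility} an inclusion), so if $C_{\phi_\pi}$ were not open, $\IC(\1_{C_{\phi_\pi}})$ would certainly be detected by the non-open orbit $C_{\phi_\pi}$ itself, and your contradiction evaporates. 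Citing Theorem~\ref{theorem:mainopen} does not repair this: that theorem constrains which sheaves are detected by the \emph{open} orbit, whereas you need the transpose statement, namely which orbits detect $\IC(\1_{C^o})$.

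The correct version of your microlocal step holds only for the trivial local system on the open orbit: since $V_\lambda$ is smooth, $\IC(\1_{C^o})=\1_{V_\lambda}[d]$, and the vanishing-cycles computation in the paper's Lemma~\ref{cor: generic appear in every standard module} shows $\NEvs_{C}(\1_{V_\lambda}[d])\ne 0$ if and only if $C=C^o$ (the paper also uses Theorem~\ref{theorem:CoW} there to make the conclusion independent of the Whittaker normalization). So the fix is to insert, at the start of your converse argument, ``$\pi$ generic $\Rightarrow$ $L(s,\phi_\pi,\Ad)$ regular at $s=1$ by \cite{GP}*{Conjecture 2.6} $\Rightarrow$ $C_{\phi_\pi}=C^o$ by Proposition~\ref{prop:openL}''; with that step your argument collapses onto the paper's proof, and without it the converse does not go through.
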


\begin{proof}
Let $\lambda$ be the infinitesimal parameter of $\phi$ and let $C^o$ be the open orbit of $V_\lambda$ and  let $\phi^o$ be the open Langlands parameter. 
It suffices to show that if $\phi$ is not open, then $\Pi_\phi^{\ABVpure}(G)$ cannot contain a generic representation. 
By Proposition~\ref{prop:openL}, \cite{GP}*{Conjecture 2.6} and Desiderata (\ref{desi2}) of Section \ref{ssec:LLC}, if $\pi$ is a generic representation in $\Pi^\pure_\lambda(G)$, there is a Whittaker datum $\mathfrak{w}$ such that $J(\mathfrak{w})(\pi)=(C^o,\1)$ under the local Langlands correspondence. 
By Lemma~\ref{cor: generic appear in every standard module}, the only ABV-packet which contains $\pi$ is $\Pi^{\ABVpure}_{\phi^o}(G)$.
\end{proof}

\begin{lemma}\label{cor: generic appear in every standard module}
Assume Desiderata (\ref{wittwisting}) of Section \ref{ssec:LLC}. 
Let $C^o$ be the open orbit of $V_\lambda$.
If there exists a Whitaker normalization for which $J(\mathfrak{w})(\pi) = (C^o,\1)$ then in any normalization:
the only $L$-packet containing $\pi$ is the one for $C^o$ and the only ABV-packet containing $\pi$ is the one for $C^o$.
\end{lemma}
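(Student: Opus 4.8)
The plan is to reduce the statement to two inputs: first, the change-of-Whittaker formula for ABV-packets established in Theorem~\ref{theorem:CoW} (and its L-packet shadow, Desiderata~(\ref{wittwisting})), which shows that membership in an ABV-packet is Whittaker-independent; and second, the explicit description of $\NEvs_{C}$ on the perverse sheaf $\mathcal{P}_{\mathfrak{w}}(\pi)$ when $\pi$ corresponds to the open orbit. So fix $\lambda$, the open orbit $C^o \subseteq V_\lambda$, and $\phi^o$ the corresponding (open) Langlands parameter, and suppose $J(\mathfrak{w})(\pi) = (\phi^o, \1)$ for some Whittaker datum $\mathfrak{w}$. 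Then $\mathcal{P}_{\mathfrak{w}}(\pi) = \IC(\underline{\1}_{C^o}) = \1_{V_\lambda}[\dim V_\lambda]$ (up to shift/twist), the constant perverse sheaf on $V_\lambda$, since $C^o$ is open and dense.

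First I would observe that the first assertion, about L-packets, is essentially a definitional consequence of Desiderata~(\ref{desi1}): $\pi$ lies in a unique pure L-packet, namely $\Pi^\pure_{\phi^o}(G)$, because the map $\Pi^\pure(G) \to \Phi(G^*)$ is a well-defined function and $\phi^o$ is the parameter of $\pi$; this is independent of any Whittaker choice. The substantive claim is about ABV-packets. By Theorem~\ref{theorem:CoW}, the set $\Pi^{\ABVpure}_{\phi,\mathfrak{w}}(G)$ does not depend on $\mathfrak{w}$, so it suffices to show, for the fixed normalization $\mathfrak{w}$, that $\NEvs_{C_\phi}(\mathcal{P}_{\mathfrak{w}}(\pi)) = 0$ for every Langlands parameter $\phi$ with infinitesimal parameter $\lambda$ other than $\phi^o$, and is nonzero for $\phi = \phi^o$. (Parameters with a different infinitesimal parameter are irrelevant since $\pi$ has infinitesimal parameter $\lambda$.)

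Next I would compute $\NEvs_{C_\phi}(\1_{V_\lambda}[\dim V_\lambda])$. By Theorem~\ref{theorem:compatibility} applied to $\phi^o$, $\NEvs_{C_{\phi^o}}(\IC(\underline{\1}_{C^o})) = \left(\underline{\1}_{C^o}\boxtimes \1_{(C^o)^*}\right)\vert_{\Lambda^\mathrm{gen}_{C^o}}$, which is the constant (hence nonzero) local system; here $(C^o)^* = \{0\}$ is the closed orbit in $V_\lambda^*$ by Lemma~\ref{lemma:open}. For $\phi \neq \phi^o$ with infinitesimal parameter $\lambda$, I would argue that $\NEvs_{C_\phi}(\1_{V_\lambda}[\dim V_\lambda]) = 0$: the vanishing-cycles functor $\NEvs_{C_\phi}$ is computed from $\RPhi_{f_{C_\phi}}$ of $\1_{V_\lambda}\boxtimes \1_{C_\phi^*}$ restricted to $\Lambda^\mathrm{gen}_{C_\phi}$, and the constant sheaf is smooth everywhere on $V_\lambda$, so its vanishing cycles along the function $f_{C_\phi}$ vanish away from the critical locus; more precisely, $\mathcal{P}_{\mathfrak{w}}(\pi)$ being the IC sheaf of the open orbit means its microlocal support (characteristic variety) is exactly $\overline{\Lambda_{C^o}}$, whose regular part meets only the conormal $\Lambda^\mathrm{gen}_{C^o}$; by the microlocal description of $\NEvs$ in \cite{CFMMX}*{Section 7}, $\NEvs_{C_\phi}(\mathcal{P}_\mathfrak{w}(\pi)) \neq 0$ forces $\Lambda^\mathrm{gen}_{C_\phi} \subseteq \overline{\Lambda_{C^o}}$, which by the dimension count of Lemma~\ref{lemma:dim} and irreducibility of $\overline{\Lambda_{C^o}}$ forces $C_\phi = C^o$.

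The main obstacle I anticipate is making the vanishing step—that $\NEvs_{C_\phi}(\1_{V_\lambda}[\dim V_\lambda]) = 0$ for $C_\phi \neq C^o$—fully rigorous at the level of citations: one must pin down exactly which statement in \cite{CFMMX}*{Section 7} identifies the (non)vanishing of $\NEvs_C(\mathcal{F})$ with the containment of $\Lambda_C$ in the characteristic cycle / micro-support of $\mathcal{F}$, and confirm that for $\mathcal{F} = \IC$ of the open orbit this micro-support is precisely $\overline{\Lambda_{C^o}}$ (with multiplicity one). Once that microlocal input is in hand, combining it with Lemma~\ref{lemma:dim}, Lemma~\ref{lemma:open}, Theorem~\ref{theorem:compatibility} and Theorem~\ref{theorem:CoW} closes the argument, and the Whittaker-independence of the conclusion is automatic from Theorem~\ref{theorem:CoW} together with Desiderata~(\ref{wittwisting}).
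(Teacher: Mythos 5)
Your proposal is correct and follows essentially the same route as the paper: Whittaker-independence of the packet via Theorem~\ref{theorem:CoW}, the identification $\mathcal{P}_{\mathfrak{w}}(\pi)=\IC(\1_{C^o})=\1_{V_\lambda}[\dim V_\lambda]$, and the fact that $\NEvs_C$ applied to this sheaf is nonzero only for $C=C^o$ (plus the observation that the L-packet assignment itself does not move under change of $\mathfrak{w}$). The microlocal citation you flag as the main obstacle is not actually needed: the paper settles the vanishing step directly by noting that, since $V_\lambda$ is smooth and the sheaf is constant, $\NEvs_{C}(\IC(\1_{C^o}))\neq 0$ iff $\RPhi_{f_{C}}\left(\1_{V_\lambda}[d]\boxtimes \1_{C^*}\right)\vert_{\Lambda^{\mathrm{gen}}_{C}}\neq 0$ iff $C=C^o$, which is your micro-support argument in concrete form.
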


\begin{proof}
The first claim follows from Section~\ref{ssec:LLC} because under a different normalization we would have $J(\mathfrak{w}')(\pi) = (C^o, \varrho_{\mathfrak{w},\mathfrak{w}'})$ so that the associated $L$-packet remains that for $C^o$.
%\todo[inline]{And $\varrho_{w,w'}$ would be non-trivial? {\color{blue} I think the key point is: no matter what normalization one chooses, the orbit is still the open one. It is actually the statement we need to prove. But I think yes, the character is nontrivial if $\mathfrak{w}\ne \mathfrak{w}'$.QZ\\ Yes, $\varrho_{\mathfrak{w},\mathfrak{w}'}$ is non-trivial precisely if the LLC coming from w and w' are different.}}
The second claim follows from Theorem~\ref{theorem:CoW} and the observation that the only ABV-packet containing $\iota_{\mathfrak{w}}(C^o,\1)$ where $C^o$ is the open orbit, is the ABV-packet for the open orbit.
Indeed, because $V_\lambda$ is smooth and $IC({\1}_{C^o}) = \1_{V_\lambda}[d]$ we have that $\NEvs_{C} (  \IC({\1}_{C^o}))  \neq 0$ if and only if $\Evs_{C} (  \IC({\1}_{C^o}))  \neq 0$ if and only if $\RPhi_{f_{C}}\left(\1_V[d]\boxtimes \1_{C^*}\right)\vert_{\Lambda^\mathrm{gen}_{C}}\neq 0$ if and only if $C=C^o$.
%\todo[inline]{I think it would be more clear if we add a statement like: if $\NEv_C(C^o,\1)\ne 0$, then $C=C^o$. I remember we had this in an early version. This might be trivial from the geometry side. But, still, it should make things clear. QZ\\ I think I have addressed this, unless you want to say more- AF}
\end{proof}

We remind the reader that a generic representation appearing in a pure L-packet $\Pi^\pure_\phi(G)$ necessarily lies in the L-packet for the quasi-split group $G$.

\begin{remark}
Assuming Desiderata~(\ref{desi2}) of Section~\ref{ssec:LLC}, for classical groups if $C^o$ is open then $\iota_{\mathfrak{w}}(C^o,\1)$ is generic; see Corollary~\ref{corollary: GI} and Proposition~\ref{prop:openL}.
\end{remark}

\begin{corollary}\label{corollary: classical}
Let $G$ be a quasi-split classical group over $F$ or a pure inner form of such.
Then $\Pi^{\ABVpure}_\phi(G)$ contains a generic representation if and only if $L(s,\phi,\Ad)$ is regular at $s=1$.
\end{corollary}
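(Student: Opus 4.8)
The plan is to combine the two already-established facts in the excerpt and handle the two cases (quasi-split, and pure inner form) separately, both reducing to the same engine. First recall Proposition~\ref{prop:openL}: for any Langlands parameter $\phi$, the adjoint $L$-function $L(s,\phi,\Ad)$ is regular at $s=1$ if and only if $\phi$ is open. So the statement to be proved is equivalent to: $\Pi^{\ABVpure}_\phi(G)$ contains a generic representation if and only if $\phi$ is open. This is precisely Theorem~\ref{theorem-open-intro}~(4) (for pure inner forms) together with Corollary~\ref{corollary: GI} (for the quasi-split case, recalling that any generic representation in a pure packet lives on the quasi-split form). So the corollary is really a repackaging, and the proof is short.

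In more detail, I would argue as follows. Let $\lambda$ be the infinitesimal parameter of $\phi$. Suppose first $G = G^*$ is quasi-split classical. If $\phi$ is open, then by Corollary~\ref{corollary: GI} the $L$-packet $\Pi_\phi(G^*)$ contains a generic representation $\pi$; since $\Pi_\phi^\pure(G^*) \subseteq \Pi^{\ABVpure}_\phi(G^*)$ by Theorem~\ref{theorem:compatibility}, this $\pi$ lies in the ABV-packet, giving one direction. Conversely, if $\phi$ is not open, we invoke Theorem~\ref{theorem:maingeneric}: since \cite{GP}*{Conjecture 2.6} holds for classical groups by \cite{GI}*{Theorem B.2}, the ABV-packet $\Pi^{\ABVpure}_\phi(G^*)$ contains no generic representation. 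Combining with Proposition~\ref{prop:openL} to translate "open" into "$L(s,\phi,\Ad)$ regular at $s=1$" finishes the quasi-split case.

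Now suppose $G$ is a pure inner form of a quasi-split classical group $G^*$. The key point is that $\Pi^{\ABVpure}_\phi(G)$ — as I read the conventions of the introduction, $\Pi^{\ABVpure}$ denotes the \emph{pure} ABV-packet, the union over the quasi-split form and all its pure inner forms — is \emph{the same set} whether we start from $G$ or from $G^*$, since it only depends on the infinitesimal parameter $\lambda$, the Vogan variety $V_\lambda$, the group $H_\lambda$, and the orbit $C_\phi$, all of which are attached to $\widehat G = \widehat{G^*}$ and the parameter, not to the chosen inner form. Hence $\Pi^{\ABVpure}_\phi(G) = \Pi^{\ABVpure}_\phi(G^*)$ as sets-with-function, and the pure inner form case follows immediately from the quasi-split case already done; the generic member, when it exists, lies on $G^*(F)$, as remarked just before the corollary. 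Applying Proposition~\ref{prop:openL} once more replaces "$\phi$ open" by "$L(s,\phi,\Ad)$ regular at $s=1$", completing the proof.

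The main (really the only) obstacle is bookkeeping about which packet is which: one must be careful that the ABV-packet in the statement is the pure version $\Pi^{\ABVpure}$, so that it is genuinely insensitive to the choice of pure inner form and so that Theorem~\ref{theorem:maingeneric} (stated for quasi-split $G$) applies on the nose; and one must recall, via Desiderata~(\ref{desi2}) of Section~\ref{ssec:LLC} and Lemma~\ref{cor: generic appear in every standard module}, that a generic representation pins down the open orbit and therefore cannot appear in any ABV-packet for a non-open parameter. There is no new geometric input beyond what Proposition~\ref{prop:openL}, Corollary~\ref{corollary: GI}, and Theorem~\ref{theorem:maingeneric} already provide, so the proof is a one-line synthesis: apply Proposition~\ref{prop:openL} to rewrite the $L$-function condition, then invoke Theorem~\ref{theorem-open-intro}~(3)--(4).
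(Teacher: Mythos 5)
Your proposal is correct and matches the paper's own proof, which simply cites \cite{GI}*{Theorem B.2}, Proposition~\ref{prop:openL} and Theorem~\ref{theorem:maingeneric}; your extra care in spelling out the "open $\Rightarrow$ generic" direction via Corollary~\ref{corollary: GI} and Theorem~\ref{theorem:compatibility}, and in noting that the pure ABV-packet depends only on the dual-group data so the pure inner form case reduces to the quasi-split one, is exactly the bookkeeping the paper leaves implicit.
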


\begin{proof}
This follows from \cite{GI}*{Theorem B.2}, Proposition~\ref{prop:openL} and Theorem~\ref{theorem:maingeneric} directly. 
\iffalse
\todo[inline]{Please expand on this proof. Also, this corollary is the same result as Theorem~\ref{theorem:maingeneric} so why is it here and why is the proof different?? {\color{blue} Theorem 3.9 is conditional on [GP92, Conjecture 2.6]. For classical group, this conjecture is known to be true by [GI]. It is worth to keep a corollary here to state an unconditional result for classical group.}} 
\fi
\end{proof}

\iffalse
\begin{proof}[Proof of Proposition \ref{prop: tempered ABV}]
By Proposition \ref{prop: temper=arthur+open}, $\phi$ is tempered if and only if $\phi$ is open in $V_\lambda$, i.e., if and only if $L(s,\phi,\Ad)$ is regular at $s=1$. The assertion follows from Corollary \ref{corollary: classical} (1) directly.
\end{proof}
\fi

\subsection{Relation to a weak form of Vogan's conjecture on A-packets}

Note that Corollary~\ref{cor:vogantempered} plus \cite{CFMMX}*{Conjecture 1 (a), Section 8.3} implies the enhanced Shahidi's conjecture, \cite{LLS}*{Conjecture 1.2}, which is a generalization of \cite{Shahidi}*{Conjecture 9.4}. 
%This gives a general framework to prove that conjecture. 
But in fact, one only needs one direction of \cite{CFMMX}*{Conjecture 1 (a), Section 8.3} to get \cite{LLS}*{Conjecture 1.2}, as we now show.

\begin{proposition}\label{cor: main cor}
Let $G$ be a quasi-split classical group. Let $\psi$ be an Arthur parameter. 
If $\Pi^\pure_\psi(G) \subset\Pi_{\phi_\psi}^{\ABVpure}(G)$ for every Arthur parameter $\psi$,
then \cite{LLS}*{Conjecture 1.2} is true: $\Pi^\pure_\psi(G)$ contains a generic representation if and only if $\psi$ is tempered. 
\end{proposition}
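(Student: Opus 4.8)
The plan is to deduce \cite{LLS}*{Conjecture 1.2} from the genericity results already established in this section together with the one-directional hypothesis $\Pi^\pure_\psi(G) \subset \Pi_{\phi_\psi}^{\ABVpure}(G)$ and the characterization of tempered parameters as open parameters of Arthur type (Proposition~\ref{prop: temper=arthur+open}). First I would treat the easy direction: if $\psi$ is tempered, then $\phi_\psi = \phi$ is tempered, hence $L(s,\phi,\Ad)$ is regular at $s=1$ (this is standard, since boundedness of $\phi|_{W_F}$ forces the relevant eigenvalue condition to fail; alternatively it follows from Proposition~\ref{prop:openL} once one knows tempered implies open, which is Proposition~\ref{prop: temper=arthur+open}). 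Then by \cite{GI}*{Theorem B.2} (the Gross--Prasad--Rallis conjecture for classical groups), $\Pi_\phi(G)$ contains a generic representation; since $\Pi_\phi^\pure(G) \subseteq \Pi_{\phi_\psi}^{\ABVpure}(G)$ by Theorem~\ref{theorem:compatibility}, and a generic representation in a pure packet always lies in the packet of the quasi-split group, we conclude $\Pi^\pure_\psi(G)$ contains a generic representation.

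For the converse, suppose $\Pi^\pure_\psi(G)$ contains a generic representation $\pi$. By the hypothesis $\Pi^\pure_\psi(G) \subset \Pi_{\phi_\psi}^{\ABVpure}(G)$, the ABV-packet $\Pi_{\phi_\psi}^{\ABVpure}(G)$ contains the generic representation $\pi$. Now apply Theorem~\ref{theorem:maingeneric} (using that \cite{GP}*{Conjecture 2.6} holds for classical groups by \cite{GI}*{Theorem B.2}, i.e. Corollary~\ref{corollary: GI}): since $\Pi_{\phi_\psi}^{\ABVpure}(G)$ contains a generic representation, $\phi_\psi$ is open. But $\phi_\psi$ is of Arthur type by construction, so by Proposition~\ref{prop: temper=arthur+open} the parameter $\phi_\psi$ is tempered, which is to say $\psi$ is a tempered Arthur parameter. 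This completes the proof.

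The main subtlety — really the only place one must be slightly careful — is matching the notion of ``tempered Arthur parameter'' on the two sides: one should check that $\psi$ tempered (meaning $\psi|_{W_F}$ bounded and, in Arthur's normalization, the second $\SL_2$ trivial) is equivalent to $\phi_\psi$ being a tempered Langlands parameter, so that Proposition~\ref{prop: temper=arthur+open} applies verbatim. This is exactly the content of the equivalence ``open and of Arthur type $\iff$ tempered'' combined with the fact that $\phi_\psi$ is of Arthur type; no new argument is needed beyond quoting Proposition~\ref{prop: temper=arthur+open}. I do not anticipate any genuine obstacle here: the proposition is essentially an assembly of Theorem~\ref{theorem:maingeneric}, Corollary~\ref{corollary: GI}, and Proposition~\ref{prop: temper=arthur+open}, with the one-sided containment hypothesis supplying the bridge from A-packets to ABV-packets.
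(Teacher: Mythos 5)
Your converse direction (generic in $\Pi^\pure_\psi(G)$ $\Rightarrow$ $\psi$ tempered) is correct and is essentially the paper's argument in contrapositive form: the hypothesis pushes the generic representation into $\Pi^{\ABVpure}_{\phi_\psi}(G)$, Theorem~\ref{theorem:maingeneric} (via Corollary~\ref{corollary: GI}, i.e.\ \cite{GI}*{Theorem B.2}) forces $\phi_\psi$ to be open, and Proposition~\ref{prop: temper=arthur+open} upgrades ``open and of Arthur type'' to tempered. No issue there.

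The tempered direction, however, has a genuine gap at its last step. You correctly deduce from \cite{GI}*{Theorem B.2} that the L-packet $\Pi^\pure_{\phi_\psi}(G)$ contains a generic representation, and that this representation lies in $\Pi^{\ABVpure}_{\phi_\psi}(G)$ via Theorem~\ref{theorem:compatibility}; but the conclusion you need is that the \emph{A-packet} $\Pi^\pure_\psi(G)$ contains it, and nothing you invoke bridges that. The hypothesis of the proposition is the containment $\Pi^\pure_\psi(G)\subset\Pi^{\ABVpure}_{\phi_\psi}(G)$, which points in the wrong direction for this implication: both containments you have (L-packet into ABV-packet, A-packet into ABV-packet) land in the ABV-packet and cannot be combined to place the generic member of the L-packet inside the A-packet. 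Even using Theorem~\ref{theorem:mainopen} to identify $\Pi^{\ABVpure}_{\phi_\psi}(G)=\Pi^\pure_{\phi_\psi}(G)$ for open $\phi_\psi$, the hypothesis only gives $\Pi^\pure_\psi(G)\subset\Pi^\pure_{\phi_\psi}(G)$, which does not force the A-packet to contain the generic constituent. The missing input is the identification of tempered pure A-packets with pure L-packets, i.e.\ \cite{Arthur:book}*{Theorem 1.5.1(b)}, packaged in this paper as Corollary~\ref{cor:vogantempered}: for tempered $\psi$ one has $\Pi^\pure_\psi(G)=\Pi^\pure_{\phi_\psi}(G)=\Pi^{\ABVpure}_{\phi_\psi}(G)$, and then the generic representation supplied by \cite{GI}*{Theorem B.2} lies in $\Pi^\pure_\psi(G)$ as required. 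This is exactly how the paper argues the tempered direction; with that citation added your proof is complete.
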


\begin{proof}
By Corollary~\ref{cor:vogantempered}, if $\psi$ is tempered, then $\Pi^\pure_\psi(G)=\Pi^\pure_\phi(G)=\Pi_{\phi}^{\ABVpure}(G)$, which contains a generic representation by \cite{GI}*{Theorem B.2}.
 If $\psi$ is not tempered, then $\phi=\phi_\psi$ is not open, by Proposition~\ref{prop: temper=arthur+open}. 
 Thus \cite{GI}*{Theorem B.2} says that $L(s,\phi, \Ad)$ is not regular at $s=1$. Corollary \ref{corollary: classical} implies that $ \Pi_{\phi_\psi}^{\ABVpure}(G)$ does not contain any generic representation. 
 Then the assumption $\Pi_\psi^\pure(G)\subset \Pi_{\phi_\psi}^{\ABVpure}(G)$ implies that $\Pi_\psi^\pure(G) $ does not contain any generic representation.  
\end{proof}

\begin{remark}
The hypothesis $\Pi^\pure_\psi(G)\subset\Pi_{\phi_\psi}^{\ABVpure}(G)$ implies all representations $\pi$ in $\Pi^\pure_\psi(G)$ have the same infinitesimal parameter. 
This is true by \cite{Moeglin}*{Proposition 4.1} for classical groups.
\end{remark}

\begin{remark}
We first became aware of \cite{LLS}*{Conjecture 1.2} at Shahidi's 2021 talk at the \href{https://conferences.cirm-math.fr/2903.html}{2023 CIRM conference Formes automorphes, endoscopie et formule des traces}.
%It also appeared in the preprint \cite{Liu-Shahidi} which has not been published at the time of writing of our paper. 
For symplectic and split odd orthogonal groups, a proof was announced in \cite{HLL} and also in several other preprints with intersecting authorship, including \cite{HLLZ}.
\end{remark}

\section{ABV-packets for open parameters}

\subsection{ABV-packets are L-packets for open parameters}

\begin{theorem}\label{theorem:mainopen}
Let $G$ be a connected reductive algebraic group. 
If $\phi$ is an open parameter for $G$ then the ABV-packet for $\phi$ is the pure L-packet for $\phi$:
\[
\Pi^\pure_\phi(G)
=
\Pi^{\ABVpure}_{\phi}(G).
\]
Moreover, the function $\NEvs_\phi \mathcal{P}_\mathfrak{w}$ defined by vanishing cycles coincides with the function $J(\mathfrak{w})$ from the local Langlands correspondence:
\[
\begin{tikzcd}
\Pi^\pure_\phi(G) \arrow[equal]{d}  \arrow{rr}{J(\mathfrak{w})} && \widehat{A_\phi} \arrow[>->]{d}\\
\Pi^{\ABVpure}_{\phi}(G) \arrow{rr}{\NEvs_\phi \circ\mathcal{P}_{\mathfrak{w}}} && \Rep(A_\phi^\ABV).
\end{tikzcd}
\]
\end{theorem}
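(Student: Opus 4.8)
The plan is to reduce everything to a statement about vanishing cycles applied to the constant perverse sheaf on the smooth variety $V_\lambda$, exploiting that an open parameter $\phi$ corresponds to the open orbit $C^o$ in $V_\lambda$. The key geometric input is already isolated in the proof of Lemma~\ref{cor: generic appear in every standard module}: since $V_\lambda$ is smooth of dimension $d$ and $C^o$ is the open orbit, we have $\IC(\1_{C^o}) = \1_{V_\lambda}[d]$, and more generally for a local system $\mathcal{L}_{C^o}$ on $C^o$ the intersection complex $\IC(\mathcal{L}_{C^o})$ is the (shifted) local system on all of $V_\lambda$ obtained by extending $\mathcal{L}_{C^o}$ — because the open orbit is already smooth and dense. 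So $\mathcal{P}_\mathfrak{w}$ restricted to $\Pi^\pure_\phi(G)$ lands among these "extended local system" perverse sheaves.

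First I would show the set-theoretic equality. By Theorem~\ref{theorem:compatibility} we already have the inclusion $\Pi^\pure_\phi(G) \subseteq \Pi^{\ABVpure}_\phi(G)$, so it suffices to prove the reverse: if $\pi \in \Pi^\pure_\lambda(G)$ and $\NEvs_{C^o}(\mathcal{P}_\mathfrak{w}(\pi)) \neq 0$ then $\pi \in \Pi^\pure_\phi(G)$, i.e. $C_{\phi_\pi} = C^o$. Suppose $C_{\phi_\pi} = C$ is some other orbit, so $\mathcal{P}_\mathfrak{w}(\pi) = \IC(\underline{\rho}_C)$ with $C \subsetneq V_\lambda$. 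I would invoke the microlocal support/characteristic cycle description of $\NEvs$ from \cite{CFMMX}: $\NEvs_{C^o}(\mathcal{F}) \neq 0$ forces $\Lambda^\mathrm{gen}_{C^o}$ to meet the characteristic variety of $\mathcal{F} = \IC(\underline{\rho}_C)$. By Lemma~\ref{lemma:open}, the Pyasetskii dual $(C^o)^*$ is the closed/trivial orbit $\{0\}$ in $V_\lambda^*$, so $\Lambda^\mathrm{gen}_{C^o}$ sits over $C^o \times \{0\}$; hence $\NEvs_{C^o}(\IC(\underline{\rho}_C)) \neq 0$ would force $C^o \subseteq \mathrm{supp}\,\IC(\underline{\rho}_C) = \bar C$, impossible unless $C = C^o$. (Alternatively: $\NEvs_{C^o}\IC(\underline{\rho}_C) = \NEvs_{C^o}$ of a sheaf supported on $\bar C$, and restricting the ambient vanishing-cycle complex to the open set lying over $C^o$ kills it.) This gives $\Pi^{\ABVpure}_\phi(G) \subseteq \Pi^\pure_\phi(G)$.

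Next I would identify the two parametrizing functions. Fix $\pi \in \Pi^\pure_\phi(G)$ with $J(\mathfrak{w})(\pi) = (\phi, \rho)$, so $\mathcal{P}_\mathfrak{w}(\pi) = \IC(\underline{\rho}_{C^o})$, which as noted equals the shifted extension of the $H_\lambda$-equivariant local system $\underline{\rho}_{C^o}$ across $V_\lambda$ — call it $\widetilde{\underline{\rho}}[d]$. By \cite{CFMMX}*{Theorem 7.22 (d)} (used already in the proof of Theorem~\ref{theorem:compatibility}),
\[
\NEvs_{C^o}\IC(\underline{\rho}_{C^o}) = \left( \underline{\rho}_{C^o} \boxtimes \1_{(C^o)^*} \right)\big\vert_{\Lambda^\mathrm{gen}_{C^o}},
\]
and since $(C^o)^* = \{0\}$ the right-hand side is exactly the pull-back of $\underline{\rho}_{C^o}$ along the $H_\lambda$-equivariant projection $\Lambda^\mathrm{gen}_{C^o} \to C^o$. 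But that pull-back is, by definition (Section~\ref{sec:VC}, and the discussion preceding Theorem~\ref{theorem:compatibility}), precisely the image of $\rho \in \Rep(A_\phi)$ under the induced map $\Rep(A_\phi) \to \Rep(A^\ABV_\phi)$. Hence $(\NEvs_\phi \circ \mathcal{P}_\mathfrak{w})(\pi)$ is the image of $J(\mathfrak{w})(\pi)$ under $\Rep(A_\phi) \to \Rep(A^\ABV_\phi)$, which is exactly the commutativity of the stated square; the injectivity of the right vertical arrow follows because the projection $\Lambda^\mathrm{gen}_{C^o} \to C^o$ has connected fibers (it is a vector-bundle-type projection onto the dense orbit), so $A^\ABV_\phi \to A_\phi$ is an isomorphism here and the functor $\Rep(A_\phi)\to\Rep(A^\ABV_\phi)$ is fully faithful.

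The main obstacle is the vanishing step: rigorously ruling out $\NEvs_{C^o}(\IC(\underline{\rho}_C)) \neq 0$ for $C \neq C^o$. The clean argument is to show $\Lambda^\mathrm{gen}_{C^o}$ lies over $C^o$ (using $(C^o)^* = \{0\}$ via Lemma~\ref{lemma:open} and Lemma~\ref{lemma:dim}) together with the fact that $\RPhi$ and restriction commute, so $\NEvs_{C^o}$ of any sheaf whose support does not contain $C^o$ vanishes; one must check carefully that the normalization and the shift by $\mathcal{T}_{C^o}^\vee$ in the definition of $\NEvs_{C^o}$ do not interfere. Everything else — the smoothness observation $\IC(\mathcal{L}_{C^o}) = \widetilde{\mathcal{L}}[d]$, the application of \cite{CFMMX}*{Theorem 7.22 (d)}, and tracking the component-group maps — is routine once that point is secured.
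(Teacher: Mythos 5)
Your proposal is correct and takes essentially the same route as the paper's proof: the containment $\Pi^\pure_\phi(G)\subseteq\Pi^{\ABVpure}_\phi(G)$ comes from Theorem~\ref{theorem:compatibility}; the reverse containment comes from the fact that $(C^o)^*=\{0\}$ (Lemma~\ref{lemma:open}), so $\Lambda^{\mathrm{gen}}_{C^o}$ lies over $C^o$ and $\NEvs_{C^o}$ annihilates any $\IC(\underline{\rho}_C)$ with $C\neq C^o$; and the identification of the two parametrizations follows from $\Lambda^{\mathrm{gen}}_{C^o}=\Lambda_{C^o}\cong C^o$, hence $A^\ABV_\phi=A_\phi$, combined with \cite{CFMMX}*{Theorem 7.22 (d)} exactly as in the paper. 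One inessential slip worth noting: your opening claim that $\IC(\mathcal{L}_{C^o})$ is a shifted local system extended over all of $V_\lambda$ is false whenever $\mathcal{L}_{C^o}$ has nontrivial monodromy (any local system on the vector space $V_\lambda$ is constant), but your actual argument never uses this, since the computation of $\NEvs_{C^o}\IC(\underline{\rho}_{C^o})$ is done directly via \cite{CFMMX}*{Theorem 7.22 (d)}.
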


\begin{proof}
    By Theorem~\ref{theorem:compatibility}, this diagram commutes, so it only remains to show the two equalities above: if $\phi$ is open then $A_\phi^\ABV = A_\phi$ and $\Pi_\phi^{\ABVpure}(G) = \Pi_\phi^\pure(G)$.

    First, recall that $A_\phi^\ABV$ is the equivariant fundamental group of $\Lambda^\mathrm{gen}_{C_\phi}$. 
    Since $\phi$ is open, $C_\phi$ is the open $H_\lambda$-orbit in $V_\lambda$. Thus, $C_\phi^*$ is the closed orbit $\{ 0 \}$ in $V_\lambda^*$, by Proposition~\ref{lemma:open}. 
    Thus, $\Lambda_{C_\phi} =  C_\phi \times C_\phi^*$.
    Now observe that $\Lambda_{C_\phi}\iso C_\phi$ as an $H_\lambda$-space.
    Since this is a single $H_\lambda$-orbit, it follows that $\Lambda_{C_\phi}^\mathrm{gen}$.
    (Note that, in this case, the inclusions $\Lambda_{C_\phi}^\mathrm{gen} \subseteq \Lambda_{C_\phi}^\mathrm{reg} \subseteq \Lambda_{C_\phi}$ are all equalities.)
    Thus, $\Lambda_{C_\phi}^\mathrm{gen}\iso C_{\phi}$. 
    Now it follows that $A^\ABV_\phi$, the equivariant fundamental group of $\Lambda^\mathrm{gen}_{C_\phi}$ is $A_\phi$, the equivariant fundamental group of $C_\phi$:
    \[
    A^\ABV_\phi = A_\phi. 
    \]

    Now suppose, for a contradiction, that $\Pi^{\ABVpure}_\phi(G)$ contains a coronal representation; that is, suppose $\pi \in \Pi^{\ABVpure}_\phi(G)$ and $\pi \not\in \Pi^\pure_\phi(G)$.
    We have $\mathcal{P}_\mathfrak{w}(\pi) = \IC(\underline{\rho}_{C_{\phi_\pi}})$
    and $\NEvs_{\phi} \IC(\underline{\rho}_{C_{\phi_\pi}}) \ne 0$. 
    By \cite{CFMMX}*{}, this implies $C_{\phi_\pi} > C_{\phi}$, with strict inequality since $\pi \not\in \Pi^\pure_\phi(G)$. But since $\phi$ is open, $C_{\phi_\pi} > C_{\phi}$ is not possible. Thus,
    \[
    \Pi_\phi^{\ABVpure}(G) = \Pi_\phi^\pure(G).
    \]
This concludes the proof of Theorem~\ref{theorem:mainopen}.  
\end{proof}

\subsection{Proof of Vogan's conjecture for tempered parameters}

As shown in \cite{CFMMX}*{Section 3.11}, it follows from Arthur's main local result \cite{Arthur:book}*{Theorem 1.5.1} that pure A-packets come equipped with a canonical function $\Pi^\pure_\psi(G) \to \widehat{A_\psi}$; since this function depends on a Whittaker datum, we denote that function here by
\[
A(\mathfrak{w}) : \Pi^\pure_\psi(G) \to \widehat{A_\psi}.
\]
Vogan's conjecture on A-packets \cite{CFMMX}*{Conjecture 1, Section 8.3} says that if $G$ is a quasi-split classical group over a $p$-adic field and $\psi$ is an Arthur parameter for $G(F)$ then 
$
\Pi^\pure_\psi(G) = \Pi^{\ABVpure}_{\phi_\psi}(G)
$
and the function $\NEvs_{\phi_\psi} \circ\mathcal{P}_w$ in Equation \eqref{eq: Arthur map} is identical to that defined by Arthur. 
In particular, this includes the prediction that the function $\NEvs_\phi \circ\mathcal{P}_{\mathfrak{w}}$ produces only irreducible representations.
We now prove this conjecture for tempered parameters.
   An Arthur parameter $\psi$ is said to be tempered if its restriction to the Deligne part of the Weil-Deligne group is trivial. In this case the corresponding Langlands parameter is tempered.
   
\begin{corollary}[Vogan's conjecture on A-packets, for tempered parameters]\label{cor:vogantempered}
Let $G$ be a quasi-split classical group over $F$ (or any group for which we know Arthur's conjecture).
Let $\psi$ be a tempered Arthur parameter for $G$. 
Then the pure Arthur packet for $\psi$ is equal to the ABV-packet for the Langlands parameter $\phi_\psi$:
\[
\Pi^\pure_\psi(G)
=
\Pi^{\ABVpure}_{\phi_\psi}(G).
\]
Moreover, $A_\psi = A^\ABV_{\phi_\psi}$ and the following diagram commutes.
\[
\begin{tikzcd}
\Pi^\pure_\psi(G) \arrow[equal]{d}  \arrow{rr}{A(\mathfrak{w})} && \widehat{A_\psi} \arrow[>->]{d}\\
\Pi^{\ABVpure}_{\phi_\psi}(G) \arrow{rr}{\NEvs_{\phi_\psi} \circ\mathcal{P}_{\mathfrak{w}}} && \Rep(A^\ABV_{\phi_\psi})
\end{tikzcd}
\]
In particular, the function $\NEvs_{\phi_\psi} \circ\mathcal{P}_{\mathfrak{w}}$ produces only irreducible representations.
\end{corollary}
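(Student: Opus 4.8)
The plan is to deduce this from the results already established, principally Theorem~\ref{theorem:mainopen} together with the fact that tempered parameters are open (Proposition~\ref{prop: temper=arthur+open}) and the characterization of tempered Arthur parameters. First I would observe that since $\psi$ is a tempered Arthur parameter, its restriction to the Deligne $\SL_2(\CC)$ is trivial, so the associated Langlands parameter $\phi_\psi$ is tempered; by Proposition~\ref{prop: temper=arthur+open} (or its converse direction, using that $\phi_\psi$ is of Arthur type and tempered), $\phi_\psi$ is open. Hence Theorem~\ref{theorem:mainopen} applies to $\phi = \phi_\psi$, giving both $\Pi^{\ABVpure}_{\phi_\psi}(G) = \Pi^\pure_{\phi_\psi}(G)$ and the identification $A^\ABV_{\phi_\psi} = A_{\phi_\psi}$, along with the fact that the map $\NEvs_{\phi_\psi}\circ\mathcal{P}_\mathfrak{w}$ coincides with the enhanced Langlands correspondence bijection $J(\mathfrak{w})$, which in particular lands in $\widehat{A_{\phi_\psi}}$ (irreducible representations).

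Next I would invoke Arthur's description of tempered A-packets: for a tempered Arthur parameter $\psi$, the A-packet $\Pi_\psi(G)$ coincides with the (tempered) L-packet $\Pi_{\phi_\psi}(G)$, and Arthur's character map $A(\mathfrak{w})$ coincides with the Langlands-correspondence normalization $J(\mathfrak{w})$; likewise $A_\psi = A_{\phi_\psi}$ as component groups, since for a tempered $\psi$ the Arthur $\SL_2$ is trivial so $Z_{\dualgroup{G}}(\psi) = Z_{\dualgroup{G}}(\phi_\psi)$. Passing to pure inner forms (as is compatible with Arthur's construction, see \cite{CFMMX}*{Section 3.11} and \cites{KMSW:Unitary}), this gives $\Pi^\pure_\psi(G) = \Pi^\pure_{\phi_\psi}(G)$ and that $A(\mathfrak{w})$ equals $J(\mathfrak{w})$ on pure packets. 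Combining with the previous paragraph, all three packets $\Pi^\pure_\psi(G)$, $\Pi^\pure_{\phi_\psi}(G)$, $\Pi^{\ABVpure}_{\phi_\psi}(G)$ coincide, all three component groups $A_\psi$, $A_{\phi_\psi}$, $A^\ABV_{\phi_\psi}$ coincide, and the square in the statement is obtained by pasting the commuting square of Theorem~\ref{theorem:mainopen} (relating $\NEvs_{\phi_\psi}\circ\mathcal{P}_\mathfrak{w}$ to $J(\mathfrak{w})$) against the equality $A(\mathfrak{w}) = J(\mathfrak{w})$. The final assertion that $\NEvs_{\phi_\psi}\circ\mathcal{P}_\mathfrak{w}$ produces only irreducible representations is then immediate, since under the identification $A^\ABV_{\phi_\psi} = A_{\phi_\psi}$ this map is literally $J(\mathfrak{w})$, whose values are irreducible characters of $A_{\phi_\psi}$.

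The routine bookkeeping—tracking Whittaker data through all three constructions and checking the pasted diagram commutes on the nose—should cause no trouble given Theorem~\ref{theorem:CoW} and Theorem~\ref{theorem:compatibility}. The one point that genuinely requires care, and which I expect to be the main obstacle, is the input from Arthur's theory: one must be certain that for a \emph{tempered} Arthur parameter the A-packet and its character map reduce exactly to the tempered L-packet and its Whittaker-normalized parametrization, \emph{including} the pure-inner-form bookkeeping and the matching of Whittaker normalizations between Arthur's conventions and Vogan's. This is where the hypothesis ``$G$ is a quasi-split classical group (or any group for which Arthur's conjecture holds)'' is used in full force, and it is the only place the argument is not purely geometric; everything else is a formal consequence of Theorem~\ref{theorem:mainopen}.
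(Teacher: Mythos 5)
Your proposal is correct and follows essentially the same route as the paper: tempered implies open via Proposition~\ref{prop: temper=arthur+open}, then Theorem~\ref{theorem:mainopen} identifies the ABV-packet with the pure L-packet and $\NEvs_{\phi_\psi}\circ\mathcal{P}_\mathfrak{w}$ with $J(\mathfrak{w})$, and finally Arthur's tempered result (the paper cites \cite{Arthur:book}*{Theorem 1.5.1(b)}, which is exactly the input you flag as the crucial non-geometric ingredient) identifies $\Pi^\pure_\psi(G)$ with $\Pi^\pure_{\phi_\psi}(G)$, $A_\psi$ with $A_{\phi_\psi}$, and $A(\mathfrak{w})$ with $J(\mathfrak{w})$, after which the two commuting squares are pasted. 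Your concluding remark on irreducibility matches the paper's as well.
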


\begin{proof}
Let $\psi$ be an Arthur parameter for $G(F)$ such that the corresponding Langlands parameter $\phi_\psi$ is tempered. Then, by Proposition~\ref{prop: temper=arthur+open}, $\phi_\psi$ is open.
By Theorem~\ref{theorem:mainopen}, $A^\ABV_{\phi_\psi} = A_{\phi_\psi}$ and 
\[
\begin{tikzcd}
\Pi^\pure_{\phi_\psi}(G) \arrow[equal]{d}  \arrow{rr}{J(\mathfrak{w})} && \widehat{A_{\phi_\psi}} \arrow[>->]{d} \\
%\Pi^\pure_\psi(G) \arrow[equal]{d}  \arrow{rr}{A(\mathfrak{w})} && \widehat{A_\psi} \arrow[equal]{d}\\
\Pi^{\ABVpure}_{\phi_\psi}(G) \arrow{rr}{\NEvs_{\phi_\psi} \circ\mathcal{P}_{\mathfrak{w}}} && \Rep(A^\ABV_{\phi_\psi})
\end{tikzcd}
\]
commutes. 
On the other hand, since $\psi$ is tempered, %$\Pi^\pure_{\psi}(G/F) = \Pi^\pure_{\phi_\psi}(G)$, 
it follows from \cite{Arthur:book}*{Theorem 1.5.1(b)} that $A_\psi = A_{\phi_\psi}$ and
\[
\begin{tikzcd}
\Pi^\pure_{\phi_\psi}(G) \arrow[equal]{d}  \arrow{rr}{J(\mathfrak{w})} && \widehat{A_{\phi_\psi}} \arrow[equal]{d} \\
\Pi^\pure_\psi(G)  \arrow{rr}{A(\mathfrak{w})} && \widehat{A_\psi} 
\end{tikzcd}
\]
commutes. 
This completes the proof of Corollary~\ref{cor:vogantempered}.
\end{proof}

\appendix

\section{Discrete and tempered parameters are open}\label{sec:discrete}

We have seen in Proposition~\ref{prop: temper=arthur+open} that a Langlands parameter $\phi$ is tempered if and only if it is open and of Arthur type. 
Of course, this implies that if $\phi$ is tempered then it is open. 
In this section we give an alternative proof that tempered implies open using different techniques that illustrate the connection between open parameters and distinguished nilpotent orbits. In fact, we first give a proof that if $\pi$ is a square-integrable representation its Langlands parameter $\phi_\pi$ is open.

\subsection{Parameters for square-integrable representations are open}\label{ssec:discrete-open}

In this section we prove that the Langlands parameters for square integrable representations are open parameters:

\begin{proposition} \label{discrete}
    Let $G$ be a connected reductive algebraic group over $F$ and let $\pi$ be a square-integrable representation of $G$. 
    Let $\phi_\pi$ be a Langlands parameter for $\pi$, and $\lambda$ its infinitesimal parameter.
    Then $\phi_\pi$ is an open parameter.
\end{proposition}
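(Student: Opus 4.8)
The plan is to translate the assertion into the $\mathfrak{sl}_2$-geometry around the nilpotent $N \ceq x_{\phi_\pi} \in V_\lambda$ and then to use that a square-integrable representation is tempered. Write $\lambda = \lambda_{\phi_\pi}$, fix $\frob \in W_F$ with $\abs{\frob}_F = q$, put $s \ceq \phi_\pi(\frob,1)$, and let $(N,h,N^-)$ be an $\mathfrak{sl}_2$-triple in $\widehat{\mathfrak{g}}$ attached to $\phi_\pi|_{\SL_2}$. Since the $\phi_\pi$-images of $I_F\times 1$, of $\frob\times 1$, and of $1\times\SL_2(\CC)$ pairwise commute, this triple lies in $\widehat{\mathfrak{g}}^{I_F}$, the operators $\ad h$ and $\Ad(s)$ on $\widehat{\mathfrak{g}}^{I_F}$ are commuting semisimple operators, and by \eqref{eq-infinitesimal} one has $\lambda(\frob) = s\,q^{h/2}$ with $q^{h/2} \ceq \exp\!\big(\tfrac12(\log q)\,h\big)$, so that $\Ad(\lambda(\frob)) = \Ad(s)\circ q^{(\ad h)/2}$, where $q^{(\ad h)/2}$ acts on the $\ad h$-weight-$k$ space by $q^{k/2}$. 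Because $\pi$ is square-integrable it is tempered, hence (a desideratum of the local Langlands correspondence, see \cite{Borel:Corvallis}) $\phi_\pi$ is a tempered parameter; so $s$ lies in a compact subgroup of $\widehat{G}$ and every eigenvalue of $\Ad(s)$ on $\widehat{\mathfrak{g}}^{I_F}$ has absolute value $1$. By the proof of Proposition~\ref{prop:openL} together with Lemma~\ref{lemma:open}, $\phi_\pi$ is open if and only if $L(s,\phi_\pi,\Ad)$ is regular at $s=1$, and this happens if and only if $\{\,y\in V_\lambda^* \tq [N,y]=0\,\} = \{0\}$, where $V_\lambda^* = \{\,y\in\widehat{\mathfrak{g}}^{I_F} \tq \Ad(\lambda(\frob))y = q^{-1}y\,\}$; so it is enough to prove this vanishing.

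For the vanishing I would decompose $\widehat{\mathfrak{g}}^{I_F}$ into joint eigenspaces for $\ad h$ and $\Ad(s)$. A joint eigenvector of $\ad h$-weight $k$ and $\Ad(s)$-eigenvalue $\mu$ lies in $V_\lambda^*$ precisely when $\mu\,q^{k/2} = q^{-1}$. On the other hand, $\mathfrak{sl}_2$-representation theory applied to the triple $(N,h,N^-)$ acting on $\widehat{\mathfrak{g}}^{I_F}$ shows that $\ker(\ad N)$ is spanned by highest-weight vectors, so it is contained in the sum of the $\ad h$-weight spaces with $k\geq 0$ (this also covers the degenerate case $N=0$, where $h=0$ as well). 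Thus if there were a nonzero $y\in V_\lambda^*$ with $[N,y]=0$, we could take $y$ to be such a joint eigenvector with $k\geq 0$, forcing $\abs{\mu} = q^{-1-k/2} \leq q^{-1} < 1$, contradicting $\abs{\mu}=1$. Hence $\{\,y\in V_\lambda^* \tq [N,y]=0\,\} = \{0\}$ and $\phi_\pi$ is open.

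It remains to relate this to the distinguished-orbit picture promised at the start of this section. The same temperedness input shows that $V_\lambda = \widehat{\mathfrak{m}}_2$ and $V_\lambda^* = \widehat{\mathfrak{m}}_{-2}$, the $\ad h$-weight $\pm 2$ spaces of the reductive subalgebra $\widehat{\mathfrak{m}} \ceq \Lie\!\big(Z_{\widehat{G}}(\lambda(I_F))\cap Z_{\widehat{G}}(s)\big)$, on which $\Ad(s)$ acts trivially; and since the parameter of a square-integrable representation is moreover discrete, \cite{heiermannorbit} identifies $N$ with a \emph{distinguished} nilpotent element of $\widehat{\mathfrak{m}}$. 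A distinguished nilpotent is even and has $\ker(\ad N)$ supported in non-negative even $\ad h$-degrees, with degree-$0$ part the centre of $\widehat{\mathfrak{m}}$, so $\ker(\ad N)\cap\widehat{\mathfrak{m}}_{-2} = 0$ --- precisely the vanishing proved above. I expect the only real work to lie in making this dictionary with \cite{heiermannorbit} precise: identifying the reductive group $Z_{\widehat{G}}(\lambda(I_F))\cap Z_{\widehat{G}}(s)$, pinning down the meaning of ``distinguished'' there, and checking the grading identifications $V_\lambda = \widehat{\mathfrak{m}}_2$, $V_\lambda^* = \widehat{\mathfrak{m}}_{-2}$, which is where temperedness is genuinely used; the $\mathfrak{sl}_2$-theory and the compactness of $\phi_\pi(W_F)$ are standard.
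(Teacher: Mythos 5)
Your proof is correct, but it takes a genuinely different route from the paper's. You reduce openness to the vanishing of $\{y\in V_\lambda^*\tq [x_{\phi_\pi},y]=0\}$ via Proposition~\ref{prop:openL} and Lemma~\ref{lemma:open}, and then kill this space by an absolute-value count: writing $\Ad(\lambda(\frob))=\Ad(s)\circ q^{(\ad h)/2}$ with $s=\phi_\pi(\frob,1)$, boundedness of $\phi_\pi|_{W_F}$ forces the $\Ad(s)$-eigenvalues to have modulus $1$, while $\mathfrak{sl}_2$-theory places $\ker(\ad x_{\phi_\pi})$ in non-negative $\ad h$-weights, so no joint eigenvector can satisfy $\mu q^{k/2}=q^{-1}$. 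In effect you prove directly that any parameter with bounded restriction to $W_F$ is open, and then invoke the desideratum that a square-integrable (hence tempered) representation has a bounded parameter. The paper instead argues through Heiermann: by Theorem~\ref{heiermanndiscrete} the infinitesimal parameter of a square-integrable representation is $q$-distinguished in $\wh{M}^\sigma$, by Proposition~\ref{4.7} the attached nilpotent is distinguished there, and Carter's results on distinguished parabolic subalgebras (\cite{carter}*{Propositions 5.8.4 and 5.8.7}) give openness of its orbit in the degree-two piece, identified with $V_\lambda$ via \cite{CFMMX}*{Lemma 5.3}. Your route is shorter, more elementary, and applies verbatim to every bounded parameter of an arbitrary connected reductive $G$ without Heiermann's hypotheses; what it buys less of is exactly what the appendix was written to display, namely the link with distinguished nilpotent orbits (your closing paragraph sketches that dictionary but, as you say, leaves it imprecise and it is not needed for the proof), and it trades Heiermann's explicit construction of $\phi_\pi$ for the expectation, in the sense of \cite{Borel:Corvallis}, that tempered representations have bounded parameters. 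It also amounts to a third proof of the ``tempered implies open'' half of Proposition~\ref{prop: temper=arthur+open}, distinct from the Arthur-parameter trick used there. One small slip: the $\phi_\pi$-images of $I_F\times 1$ and $\frob\times 1$ need not commute; what you actually use is that the $\SL_2$-image commutes with all of $\phi_\pi(W_F\times 1)$ and that $\Ad(s)$ preserves $\widehat{\mathfrak{g}}^{I_F}$ because $\frob$ normalizes $I_F$, so the argument is unaffected.
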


 Our proof (given at the end of this section)  relies on certain results presented  in \cite{carter}, as well as a Theorem of Heiermann in \cite{heiermannorbit}. 
 Carter's book \cite{carter}*{Sections 5.5 to 5.8} reduces its presentation to the case of $\mathcal{G}$ a (complex) reductive group which is adjoint semi-simple.
 Definition \ref{q-dist} supposes similar assumptions, however, the two results of Carter crucial to our argumentation do not explicitly. Therefore, the proposition we prove hold for any reductive $p$-adic group, with no restriction on its dual complex group $\dualgroup{G}$. 
 
 We denote our complex reductive group by $\mathcal{G}$, $Z_\mathcal{G}$ its center, and choose a $\mathcal{B}$ a Borel subgroup $\mathcal{B} \supset \mathcal{T}$, with $\mathcal{B}= \mathcal{T}\mathcal{U}$ its unipotent radical, and we denote their respective Lie algebras $\g$, $\T$, $\U$. Let $\Phi = \Phi(\mathcal{G},\mathcal{T})$ be the set of roots of $\mathcal{T}$ on $\g$, and let $\Delta$ be the basis of $\Phi$ corresponding to $\mathcal{B}$.

We will be particularly interested in distinguished nilpotent elements (orbits). For that reason, we recall here some pre-requisites needed to understand the notations used in the proof, as well as in the following section. 

\begin{definition}[Distinguished nilpotent element]
An element $N$ of $\g$ is distinguished if and only if exp($N$) is not contained in any proper Levi of $\mathcal{G}$.
\end{definition}

Given a non-zero nilpotent element $N \in \g$,
let $\{e,h, f\}$ denote the standard basis of the $\Sl_2$ Lie algebra. One may regard $\g$ as an $\Sl_2$-module via a Jacobson-Morozov Lie algebra homomorphism: $\phi: \Sl_2 \rightarrow \g$ which, modulo conjugation, satisfies $\phi(e) = N \in \U$ and $\phi(h) = \gamma$ is in the dominant chamber of $\T$. 
It is a theorem of Dynkin \cite{Dynkin}*{Theorem 8.3}  (see also \cite{Humphreys}*{Proposition 7.6} that, in this case, $\langle \alpha, \gamma \rangle \in \{0,1,2\}$ for any $\alpha \in \Delta$. Such an element $\gamma$ will be called a weighted Dynkin diagram. By  $\Sl_2$  theory, we obtain a grading $\g= \oplus \g(i)$ where $\g(i)$ is $\left\{x \in \g \tq \ad(\gamma)x= ix\right\}$ and $N \in \g(2)$. We also have:
$$\p = \p(\gamma) = \oplus_{i \geq 0}\g(i) $$
$$ \U =  \oplus_{i > 0}\g(i) $$
$$ \lev= \oplus_{i = 0}\g(i)$$

The Lie subalgebra $\p$ contains $\bo$ and is thus a parabolic subalgebra whose Levi decomposition is $\p = \lev\U$. On the other hand, starting with a standard parabolic
subalgebra $\p_J$ , for $J \subset \Delta$, let $\eta_J:\Phi \rightarrow Z_\mathcal{G}$ be a function defined on $\Delta$ as twice the indicator function of $J$ and then extended linearly to all roots. Then, one also gets a grading $\g= \otimes_{i \geq 0} \g_J(i)$ by declaring $\g_J(0) = \T\oplus \sum_{\eta_J(\alpha)=0}\g_\alpha$ and $\g_J(i) = \sum_{\eta_J(\alpha)=i}\g_\alpha$.

\begin{equation} \label{parabolic}
\p_J = \oplus_{i \geq 0}g(i), ~~\U_J=  \oplus_{i > 0}g(i), ~~ \hbox{and} ~~\lev_J= \oplus_{i = 0}g(i)
\end{equation} 

We have (cf. \cite{carter}, Corollaries 5.7.5, 5.8.3, 5.8.9 and Proposition 5.7.6): 
\begin{proposition}
The standard parabolic subalgebra $\p_J$ is distinguished if and only if $\dim\g_J(0) = \dim\g_J(2)$. In this case, if $N$ is any element in the unique open orbit of the parabolic subgroup $P_J$ on its nilpotent radical $\U_J$, then the parabolic subalgebra associated to N as above equals to $\p_J$ .
\end{proposition}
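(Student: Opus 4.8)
The plan is to recognize this as (a packaging of) the Bala--Carter classification of distinguished nilpotent orbits, so that the proof is essentially a citation of \cite{carter}*{\S\S 5.7--5.8}: the stated equivalence is \cite{carter}*{Corollaries 5.7.5, 5.8.3, 5.8.9} and the ``in this case'' clause is \cite{carter}*{Proposition 5.7.6}. Following Carter, I would first reduce to $\mathcal{G}$ adjoint semisimple. The elementary engine is an $\Sl_2$-theoretic remark: for any nonzero nilpotent $N\in\g$ with Jacobson--Morozov triple $\{e=N,h,f\}$ and grading $\g=\oplus_i\g(i)$ by $\ad(h)$-eigenvalues, the raising operator $\ad(N):\g(i)\to\g(i+2)$ is surjective for every $i\ge 0$, and $\ker\bigl(\ad(N):\g(0)\to\g(2)\bigr)=Z_\g(e,h,f)$ is the reductive centralizer of the triple. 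Hence $\dim\g(2)\le\dim\g(0)$, with equality if and only if $Z_\g(e,h,f)=0$; and $Z_\g(e,h,f)=0$ if and only if $N$ is distinguished, since $Z_\g(e,h,f)$ is the reductive part of $Z_\g(N)$, a reductive Lie algebra is zero exactly when it has no nonzero semisimple element, and $N$ lies in a proper Levi subalgebra exactly when $Z_\g(N)$ contains a nonzero torus. So the numerical condition $\dim\g(0)=\dim\g(2)$ \emph{is} distinguishedness --- provided the grading in play is the Jacobson--Morozov grading of $N$. The content of the Proposition is that for a standard parabolic $\p_J$ the combinatorially-defined grading $\g=\oplus_i\g_J(i)$ coincides, up to conjugacy, with the Jacobson--Morozov grading of a generic element of $\U_J$.

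Two steps remain, both taken from \cite{carter}. \textbf{(i)} Given $J$: Richardson's dense-orbit theorem produces a unique open $P_J$-orbit $\mathcal{O}_J\subseteq\U_J$, with $\dim Z_{\mathcal{G}}(N)=\dim L_J=\dim\g_J(0)$ for $N\in\mathcal{O}_J$. One checks (\cite{carter}*{Proposition 5.7.6} and its corollaries) that $\mathcal{O}_J$ meets $\g_J(2)$ in the open $L_J$-orbit there, that such an $N$ completes to an $\Sl_2$-triple whose semisimple element is the cocharacter $\gamma_J$ underlying $\eta_J$ --- and $\langle\alpha,\gamma_J\rangle=\eta_J(\alpha)\in\{0,2\}$ for all $\alpha\in\Delta$, so $\gamma_J$ is a dominant weighted Dynkin element --- and therefore $\p(\gamma_J)=\oplus_{i\ge 0}\g_J(i)=\p_J$. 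Combined with the first paragraph this gives: $\p_J$ is distinguished (meaning its Richardson nilpotents are) if and only if $\dim\g_J(0)=\dim\g_J(2)$, and then the parabolic attached to any $N\in\mathcal{O}_J$ is $\p_J$ --- which is the ``in this case'' assertion. \textbf{(ii)} Conversely, given a distinguished $N$ with dominant Jacobson--Morozov element $\gamma$: Dynkin's theorem gives $\langle\alpha,\gamma\rangle\in\{0,1,2\}$ for $\alpha\in\Delta$, and one invokes the Bala--Carter fact that a distinguished nilpotent has only even Dynkin labels (\cite{carter}*{Corollary 5.8.9}); putting $J=\{\alpha\in\Delta\tq\langle\alpha,\gamma\rangle=2\}$ yields $\gamma=\gamma_J$, hence $\p(\gamma)=\p_J$, so every distinguished class occurs as in (i).

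The main obstacle is exactly the reconciliation of the two gradings carried out in (i) and (ii): sliding a Richardson element of $\U_J$ into the degree-two piece $\g_J(2)$ and completing it to an $\Sl_2$-triple with semisimple part precisely $\gamma_J$, and dually ruling out odd Dynkin labels for a distinguished nilpotent. These are the points where the finiteness of $L_J$-orbits on the graded pieces and the structure of the reductive centralizer enter, and they are the content of \cite{carter}*{\S\S 5.7--5.8}; the rest is bookkeeping with $\Sl_2$-triples and dimension counts. As the Proposition is quoted from Carter, in the paper it suffices to cite \cite{carter}*{Corollaries 5.7.5, 5.8.3, 5.8.9} and \cite{carter}*{Proposition 5.7.6}; the discussion above records why these combine to give the equivalence and its refinement.
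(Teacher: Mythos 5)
Your proposal takes the same route as the paper: the paper gives no argument for this proposition at all, but simply quotes it from Carter with the citation ``cf.\ Corollaries 5.7.5, 5.8.3, 5.8.9 and Proposition 5.7.6'', so resting the proof on those references is exactly what the paper does, and your opening $\Sl_2$-theoretic paragraph (surjectivity of $\ad(N):\g(i)\to\g(i+2)$ for $i\ge 0$, with kernel on $\g(0)$ the reductive centralizer of the triple) is a correct account of why the numerical criterion detects distinguished nilpotents.

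One caution about your step (i), though, so you do not carry away a false statement: for an arbitrary $J$ it is \emph{not} true that an element $N$ of the open $P_J$-orbit on $\U_J$ completes to an $\Sl_2$-triple with semisimple element $\gamma_J$, nor that $\gamma_J$ is a weighted Dynkin element. For instance, for the standard parabolic of $\Sl_3$ whose Levi is $\mathfrak{gl}_2$ (one simple root of $\eta$-weight $0$, the other of weight $2$), the Richardson elements are minimal nilpotents with Dynkin labels $(1,1)$, whereas $\gamma_J$ has labels $(0,2)$, which is not a weighted Dynkin diagram of $\Sl_3$. Carter establishes the reconciliation statements you quote only under the hypothesis $\dim\g_J(0)=\dim\g_J(2)$ (essentially his notion of a distinguished parabolic subalgebra), so your (i) delivers that implication together with the ``in this case'' clause, while the converse --- a distinguished Richardson nilpotent forces the dimension equality --- has to come from the evenness of distinguished nilpotents and the uniqueness of the Jacobson--Morozov parabolic, i.e.\ from your step (ii), not from (i). Since the operative proof is the citation, this does not invalidate your proposal, but the sketch should not assert the $\gamma_J$-claim for general $J$.
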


\begin{proposition}
A nilpotent element $N \in \g$  is distinguished if and only if $\dim\g(0) = \dim\g(2)$. 
\end{proposition}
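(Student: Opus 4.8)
The plan is to prove both implications at once by identifying the defect $\dim\g(0)-\dim\g(2)$ with the dimension of the reductive part of the centralizer of $N$, since the latter is exactly the obstruction to $N$ being distinguished. Fix an $\Sl_2$-triple $\{e,h,f\}$ with $e=N$ and $h=\gamma$, so the grading $\g=\bigoplus_i\g(i)$ is the $\ad(\gamma)$-eigenspace decomposition and $N\in\g(2)$. Write $\g^{e,h,f}\ceq\{x\in\g\tq [e,x]=[h,x]=[f,x]=0\}$ for the centralizer of the triple; this is the Lie algebra of the reductive group $Z_\mathcal{G}(e,h,f)$, and its dimension depends only on the $\mathcal{G}$-orbit of $N$.

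First I would record the reformulation of distinguishedness in these terms. A proper Levi of $\mathcal{G}$ is the centralizer $Z_\mathcal{G}(S)$ of a nontrivial subtorus $S$. If $\exp(N)$ lies in such a Levi $M$, equivalently $N\in\Lie M$, then applying the Jacobson--Morozov theorem inside $M$ produces an $\Sl_2$-triple through $N$ that is centralized by $S\subseteq Z(M)$; since $\g^{e,h,f}$ is well defined up to conjugacy this forces $\g^{e,h,f}\neq 0$. Conversely, if $\g^{e,h,f}\neq 0$ then $Z_\mathcal{G}(e,h,f)$, a reductive group with nonzero Lie algebra, contains a nontrivial torus $S$; as $S$ centralizes $N$ we get $N\in\Lie Z_\mathcal{G}(S)$, which is a proper Levi because $\mathcal{G}$ is adjoint. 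Hence, in the adjoint semisimple setting of \cite{carter} to which everything has been reduced, $N$ is distinguished if and only if $\g^{e,h,f}=0$. This equivalence is the conceptual core of the statement and the step I expect to require the most care, being the bridge from the group-theoretic definition to the numerical criterion.

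Next comes the numerical identity, which is pure $\Sl_2$-representation theory. Decomposing $\g$ into irreducible $\Sl_2$-modules, one sees that $\ad(e)\colon\g(0)\to\g(2)$ is surjective (on each summand it maps the weight-$0$ space onto the weight-$2$ space) and that its kernel $\g^e\cap\g(0)$ equals $\g^{e,h,f}$, because a weight-zero vector killed by $e$ is a highest-weight vector, hence spans a trivial summand and is killed by $f$ as well. Therefore
\[
\dim\g(0)=\dim\g(2)+\dim\g^{e,h,f}.
\]
Since $\dim\g^{e,h,f}\geq 0$, combining this with the previous step gives $\dim\g(0)=\dim\g(2)\iff\g^{e,h,f}=0\iff N$ is distinguished, which is the claim.

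Finally I would remark that this is consistent with, and sharpens, the preceding proposition on distinguished parabolics: it is part of the Bala--Carter theory recorded in \cite{carter} that the weighted Dynkin diagram $\gamma$ of a distinguished nilpotent is even, i.e.\ $\g(i)=0$ for odd $i$, so that $\g(i)=\g_J(i)$ for the standard parabolic $\p_J=\p(\gamma)$; under this identification the equality $\dim\g(0)=\dim\g(2)$ becomes $\dim\g_J(0)=\dim\g_J(2)$, recovering that $\p_J$ is a distinguished parabolic and that $N$ lies in its open orbit on $\U_J$. The direct argument above, however, uses none of this.
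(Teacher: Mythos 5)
Your argument is correct, but it is worth noting that the paper itself offers no proof of this statement: it is recalled verbatim from Carter's book (the citation to Corollaries 5.7.5, 5.8.3, 5.8.9 and Proposition 5.7.6 covers both displayed propositions), where it is part of the Bala--Carter theory of distinguished nilpotent elements and distinguished parabolic subalgebras. What you have done is supply a direct, essentially self-contained proof of the Carter input: the identity $\dim\g(0)=\dim\g(2)+\dim\g^{e,h,f}$ from $\Sl_2$-theory (surjectivity of $\ad(e)\colon\g(0)\to\g(2)$ with kernel $\g^{e,h,f}$ -- your phrase ``spans a trivial summand'' should strictly be ``is a sum of vectors lying in trivial summands,'' but the point stands), together with the bridge ``$N$ distinguished $\iff \g^{e,h,f}=0$,'' proved via the characterization of Levi subgroups as centralizers of tori. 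The latter step quietly uses two standard facts you should flag explicitly: Kostant's theorem that all $\Sl_2$-triples through a fixed nilpotent $N$ are conjugate under $Z_{\mathcal{G}}(N)$ (so $\dim\g^{e,h,f}$ is well defined), and the reductivity of $Z_{\mathcal{G}}(e,h,f)$ (without which a nonzero $\g^{e,h,f}$ need not contain a nontrivial torus). Your use of the adjoint semisimple hypothesis in the converse direction is not a blemish but a necessity: the numerical criterion as stated fails for reductive $\g$ with positive-dimensional centre (a regular nilpotent in $\mathfrak{gl}_2$ is distinguished yet $\dim\g(0)=2>1=\dim\g(2)$), so the statement must be read in Carter's semisimple setting, or after replacing $\g$ by its derived subalgebra as Heiermann does in the definition of $q$-distinguished elements; the paper's surrounding claim that no restriction on $\dualgroup{G}$ is needed should be understood in that sense. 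In short: the paper delegates this to Carter, while your route proves it directly and makes visible exactly which structural facts it rests on.
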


Let us consider an (unramified) infinitesimal parameter $\lambda$ as defined in subsection \ref{infinitesimaldef}. We can observe that the $2$-eigenspace $\g(2)$ is precisely the Vogan variety by taking the log of $\lambda$ and comparing it to $\gamma$, up to some normalization factor.
To take into account a semi-simple element of the group itself (such as $\lambda$) instead of its Lie algebra, we will follow Heiermann in defining $q$-distinguished semi-simple element $\lambda$ in $\mathcal{G}$. 

\begin{definition}[Definition 4.5 in \cite{heiermannorbit}] \label{q-dist}
Let $\lambda$ be a semi-simple element in $\mathcal{G}$. Let $\g^{\small{\hbox{der}}}$ be the Lie algebra of the derived subgroup of $\mathcal{G}$. Let us consider an eigenvalue $\Lambda$ for the $\Ad(\lambda)$ action on $\g^{\small{\hbox{der}}}$, and denote $\g_\lambda(\Lambda)$ the corresponding eigenspace. Then $\lambda$ is $q$-distinguished if $\dim\g_\lambda(q) = \dim\g_\lambda(1)$.
\end{definition}

Note the parallel with the definition of distinguished nilpotent element. This is not coincidental, as stated in Heiermann's:

\begin{proposition}[\cite{heiermannorbit}*{Prop. 4.7}] \label{4.7}
Let $\lambda$ the hyperbolic part of a $q$-distinguished element of $G$. Let us fix a maximal torus $T$ of $\mathcal{G}$ containing $\lambda$ and a set of simple roots $\Delta$ with respect to $T$, for which $\lambda$ is positive. Then any root of $\Delta$ satisfies $\alpha(\lambda)=1$ or $\alpha(\lambda)=q$. There exists a distinguished nilpotent element $N$ in Lie($G$) so that $\Ad(\lambda)N= qN$.
\end{proposition}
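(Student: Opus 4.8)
The plan is to reduce the statement to the classical dictionary relating distinguished nilpotent orbits, distinguished parabolics, and even weighted Dynkin diagrams. First I would pass to the Lie algebra. Since the center of $\mathcal{G}$ lies in the $1$-eigenspace of every $\Ad(s)$, replacing $\mathcal{G}$ by its adjoint group affects neither the hypothesis nor the conclusion, so I may assume $\mathcal{G}$ adjoint semisimple, as in Carter's reduction. As $\lambda$ is hyperbolic and lies in $\mathcal{T}$, there is a unique $\gamma \in \T_{\mathbb{R}}$ with $\alpha(\lambda) = q^{\pair{\alpha,\gamma}/2}$ for every root $\alpha$, and the hypothesis that $\Delta$ is positive for $\lambda$ says exactly that $\gamma$ is dominant. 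The operators $\Ad(\lambda)$ and $\ad(\gamma)$ share eigenspaces, so with $\g(i) = \{x \tq \ad(\gamma)x = ix\}$ one gets $\g_\lambda(q) = \g(2)$ and $\g_\lambda(1) = \g(0) = \Lie(L)$, where $L := Z_{\mathcal{G}}(\lambda)$ is a Levi subgroup; the $q$-distinguished hypothesis thus reads $\dim \g(0) = \dim \g(2)$.

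Next I would produce the nilpotent element $N$. The space $V := \g(2) = \g_\lambda(q)$ is a prehomogeneous vector space under $L$ --- in the setting of infinitesimal parameters this is \cite{CFMMX}*{Proposition 5.6}, and in general it is part of Vinberg's theory of graded Lie algebras --- so I fix $N$ in the open $L$-orbit of $V$. Because $q \ne 1$ the element $N$ is $\ad$-nilpotent, and because $\dim V = \dim L$ the stabilizer $Z_L(N)$ is finite, i.e. $\g^N \cap \g(0) = 0$, where $\g^N := \ker(\ad N)$; note that $\g^N$ is $\ad(\gamma)$-stable, since $[N,[\gamma,\xi]] = [[N,\gamma],\xi] = -2[N,\xi] = 0$ for $\xi \in \g^N$. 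I would then complete $N$ to a Jacobson--Morozov $\Sl_2$-triple $\{N,h,f\}$, chosen so that $h$ commutes with $\gamma$; this is possible by a fixed-point argument for the $\mathbb{G}_m$-action $t \mapsto \exp(t\gamma)$ on the affine variety of such triples, which is well defined because $\Ad(\exp t\gamma)$ normalizes $Z_{\mathcal{G}}(N)$ and carries $\{N,h,f\}$ to $\{e^{2t}N, h, e^{-2t}f\}$, again a triple for $N$ with the same $h$. With this choice, $h - \gamma$ lies both in $\g^N$ (its bracket with $N$ vanishes, since $[N,h]=[N,\gamma]=-2N$) and in $\g(0)$, hence in $\g^N \cap \g(0) = 0$; so $\gamma = h$ is the weighted Dynkin diagram of $N$. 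Since now $\g(0) = \g(0)_h$ and $\g(2) = \g(2)_h$, Carter's criterion that $N$ is distinguished if and only if $\dim \g(0) = \dim \g(2)$, together with the hypothesis, shows that $N$ is distinguished.

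Finally I would invoke that a distinguished nilpotent element has an even weighted Dynkin diagram --- by the Bala--Carter classification its Jacobson--Morozov parabolic is a distinguished parabolic $\p_J$, and the grading $\eta_J$ attached to a distinguished parabolic takes only even values on the root lattice --- so $\pair{\alpha,\gamma} = \pair{\alpha,h} \in \{0,2\}$ for every $\alpha \in \Delta$. Then $\alpha(\lambda) = q^{\pair{\alpha,\gamma}/2} \in \{1,q\}$, which is the first assertion, and $\Ad(\lambda)N = qN$ because $N \in V = \g_\lambda(q)$, which is the second. The main obstacle is the content of the second paragraph: establishing that $\g_\lambda(q)$ is prehomogeneous under $Z_{\mathcal{G}}(\lambda)$, and then pinning down the grading cocharacter $\gamma$ as a Jacobson--Morozov semisimple element via the fixed-point step. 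Everything afterward is the classical structure theory of distinguished orbits recalled above.
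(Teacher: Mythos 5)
The paper does not prove this proposition at all: it is quoted verbatim from Heiermann (\cite{heiermannorbit}*{Prop.\ 4.7}) and used as a black box, so there is no internal proof to compare yours against. Judged on its own merits, your reconstruction follows the natural $\Sl_2$-theoretic route and its skeleton is sound: encode the hyperbolic element by a real grading element $\gamma$ with $\g_\lambda(q)=\g(2)$, $\g_\lambda(1)=\g(0)$; pick $N$ in the dense $Z_{\mathcal{G}}(\lambda)$-orbit of $\g(2)$; use $\dim\g(0)=\dim\g(2)$ to force $\g^N\cap\g(0)=0$; conclude $\gamma=h$ for an adapted Jacobson--Morozov triple, so $N$ is distinguished by Carter's criterion and $\gamma$ is its (even) weighted Dynkin diagram, which yields $\alpha(\lambda)\in\{1,q\}$ and $\Ad(\lambda)N=qN$. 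This is the expected argument, and it correctly isolates the dimension count as the heart of the matter.

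Two steps need tightening. First, the prehomogeneity of $\g_\lambda(q)$ under $Z_{\mathcal{G}}(\lambda)$: the grading by $\gamma$ is a priori a grading by \emph{real} numbers, so Vinberg's theorem on $\mathbb{Z}$-graded Lie algebras does not apply verbatim; you should either pass to the reductive subalgebra $\bigoplus_{n\in\mathbb{Z}}\g_\gamma(2n)$ (the fixed subalgebra of $\Ad(\exp(\pi i\gamma))$), which contains $\g(0)$ and $\g(2)$ and is genuinely $\mathbb{Z}$-graded, or realize $\lambda$ as $\lambda(\Fr)$ of an unramified infinitesimal parameter so that \cite{CFMMX}*{Proposition 5.6} applies directly. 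Second, the fixed-point argument as written is not well defined: $\Ad(\exp t\gamma)$ scales $N$, so it does \emph{not} act on the variety of triples through $N$, and the parenthetical claim that it sends $\{N,h,f\}$ to $\{e^{2t}N,h,e^{-2t}f\}$ ``with the same $h$'' presupposes the $\gamma$-invariance of $h$ that you are trying to establish. Either twist the action, sending $(h,f)$ to $\bigl(\Ad(\exp t\gamma)h,\ e^{2t}\Ad(\exp t\gamma)f\bigr)$, which does preserve triples through $N$ and whose fixed points are exactly the adapted triples (and then invoke a fixed-point theorem for one-parameter groups on the affine torsor of such triples), or---more simply---use the standard graded Jacobson--Morozov lemma: take any triple $(N,h',f')$, let $h'_0$ be the $\g(0)$-component of $h'$; then $[h'_0,N]=2N$ and $h'_0\in[N,\g(-2)]$, so $h'_0$ completes to a triple with $h=h'_0\in\g(0)$, which is all your argument uses. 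With these repairs the proof is complete.
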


We now want to prove that a discrete Langlands parameter is open. Our strategy will be to claim that the infinitesimal parameter $\lambda$ attached to a discrete Langlands parameter is $q$-distinguished, using \cite{heiermannorbit}*{Theorem 6.3} recalled below. Then Proposition \ref{4.7} allows us to attach a distinguished nilpotent element to it, which, by definition (see Proposition \ref{4.7}), lies in the Vogan variety. Then two propositions of \cite{carter} give us the conclusion. 

\begin{theorem}\cite{heiermannorbit}*{Theorem 6.3} [(see also 6.4)] \label{heiermanndiscrete}
Let $\pi$ be a square-integrable representation of $G$ which is a subquotient in $I_P^G(\sigma\otimes \chi_\lambda)$, and assume $\sigma$ is a cuspidal representation of $M$ such that its Langlands parameter $\phi_\sigma$ is discrete and that there exists a minimal semi-standard Levi in ${}^{L}M$ that contains $\phi_\sigma|_{W_F}$ and $\phi_\sigma(d_w|_{|w|=q})$.
Then, there exists a $q$-distinguished element $s_\lambda$ of the connected centralizer of $\phi_\sigma|_{W_F}$ in ${}^{L}G$, denoted $\hat{M}^\sigma$, whose compact part is trivial and by Proposition \ref{4.7}, we can associate a distinguished nilpotent element $N$ to this $s_\lambda$ (such that $(s_\lambda, N_{\lambda, \sigma})$ is a $L^2$-pair in the terminology of Lusztig).   
Then we set $\phi_\pi(w, h) = \phi_\sigma(w,1)\phi_{\sigma, \lambda}(h)$ where $\phi_{\sigma, \lambda}: SL_2(\C) \rightarrow \hat{M}^\sigma$:
$$ \phi_{\sigma,\lambda}\begin{pmatrix} 1 & 1 \\ 0 & 1 \end{pmatrix}=\exp(N_{\lambda,\sigma}) ~~ \hbox{and} ~~ \exp\begin{pmatrix} q^{1/2} & 0 \\ 0 & q^{-1/2} \end{pmatrix} = s_\lambda $$
This is a well-defined map from $W_F\times \SL_2(\C)$ to ${}^{L}G$, which is admissible and discrete, and uniquely determined by $\pi$. 
\end{theorem}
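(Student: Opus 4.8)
This statement reproduces \cite{heiermannorbit}*{Theorem 6.3}, so the plan is to retrace Heiermann's argument, whose engine is the theory of the Harish-Chandra $\mu$-function together with the combinatorics of residue points. First I would invoke the Harish-Chandra--Silberger description of square-integrable subquotients of parabolic induction: a square-integrable $\pi$ occurs in $I_P^G(\sigma\otimes\chi_\lambda)$ exactly when $\chi_\lambda$ is a \emph{residue point} of the $\mu$-function attached to the cuspidal pair $(M,\sigma)$, meaning that $\chi\mapsto \mu^G(\sigma\otimes\chi)$ has a pole of maximal order at $\chi_\lambda$ and $\chi_\lambda$ lies in the antidominant cone. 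This is precisely the input needed for the geometric constructions of \cite{heiermannorbit}*{\S 4--5}.

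Next, granting the local Langlands correspondence for $\sigma$, whose parameter $\phi_\sigma$ is assumed discrete, I would appeal to the earlier results of \cite{heiermannorbit} identifying the root system that governs the poles and zeros of $\mu^G(\sigma\otimes\chi_\lambda)$ with the root system of $\hat{M}^\sigma\ceq Z_{\Lgroup{G}}(\phi_\sigma|_{W_F})^\circ$ relative to a maximal torus through $\phi_\sigma(d_w|_{|w|=q})$; the hypothesis that a minimal semi-standard Levi of $\Lgroup{M}$ contains $\phi_\sigma|_{W_F}$ and $\phi_\sigma(d_w|_{|w|=q})$ is precisely what legitimizes this identification. Under this dictionary the residue point $\chi_\lambda$ becomes a semisimple element $s_\lambda\in\hat{M}^\sigma$: maximality of the pole order translates into equality of the dimensions of the $q$-eigenspace and the $1$-eigenspace of $\Ad(s_\lambda)$ on $\Lie(\hat{M}^\sigma)\der$, i.e. $s_\lambda$ is $q$-distinguished in the sense of Definition~\ref{q-dist}, while square-integrability (as opposed to mere essential square-integrability) forces the compact part of $s_\lambda$ to be trivial, since a nontrivial unitary coordinate of $\chi_\lambda$ would produce a positive-dimensional family of residue points and hence a non-square-integrable subquotient.

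Then I would feed $s_\lambda$ into Proposition~\ref{4.7}, applied inside $\hat{M}^\sigma$, obtaining a distinguished nilpotent $N_{\lambda,\sigma}\in\Lie(\hat{M}^\sigma)$ with $\Ad(s_\lambda)N_{\lambda,\sigma}=q\,N_{\lambda,\sigma}$, so that $(s_\lambda,N_{\lambda,\sigma})$ is a distinguished $L^2$-pair in Lusztig's sense; by Jacobson--Morozov, $N_{\lambda,\sigma}$ extends to an $\Sl_2$-triple, unique up to conjugation by $Z_{\hat{M}^\sigma}(N_{\lambda,\sigma})$, and the relation above together with triviality of the compact part of $s_\lambda$ pins $s_\lambda$ down as $\exp(\tfrac{1}{2}(\log q)\,h)$ for the semisimple member $h$ of the triple, yielding the homomorphism $\phi_{\sigma,\lambda}:\SL_2(\C)\to\hat{M}^\sigma$ with $\phi_{\sigma,\lambda}\bspm 1&1\\ 0&1\espm=\exp(N_{\lambda,\sigma})$ and $\phi_{\sigma,\lambda}(\diag(q^{1/2},q^{-1/2}))=s_\lambda$. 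Setting $\phi_\pi(w,h)=\phi_\sigma(w,1)\,\phi_{\sigma,\lambda}(h)$, the two factors commute because $\phi_{\sigma,\lambda}$ takes values in $Z_{\Lgroup{G}}(\phi_\sigma|_{W_F})^\circ$, so $\phi_\pi$ is a genuine homomorphism; it is admissible because $\phi_\pi(\Fr)=\phi_\sigma(\Fr,1)$ is semisimple and $\phi_\pi|_{\SL_2(\C)}=\phi_{\sigma,\lambda}$ is algebraic; and it is discrete because the reductive part of $Z_{\Lgroup{G}}(\phi_\pi)$ is the reductive part of $Z_{\hat{M}^\sigma}(N_{\lambda,\sigma})$ intersected with $Z_{\widehat{G}}(\phi_\sigma)$, which is central since $N_{\lambda,\sigma}$ is distinguished in $\hat{M}^\sigma$ and $\phi_\sigma$ is discrete for $M$. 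Uniqueness then follows because $\chi_\lambda$ is determined by $\pi$ up to the stabilizer $W(M,\sigma)$, hence $s_\lambda$ up to $\hat{M}^\sigma$-conjugacy, hence $N_{\lambda,\sigma}$ and its $\Sl_2$-triple up to conjugacy, while $\sigma$ is determined by $\pi$ up to association and $\phi_\sigma$ by $\sigma$.

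The hard part is the dictionary used in the second paragraph: converting the \emph{analytic} statement ``pole of the $\mu$-function of maximal order at an antidominant point'' into the \emph{group-theoretic} statement ``$s_\lambda$ is $q$-distinguished in $\hat{M}^\sigma$ with trivial compact part.'' This is the substantive content of \cite{heiermannorbit}; it rests on an explicit computation of $\mu^G(\sigma\otimes\chi_\lambda)$ in terms of the root datum of $\hat{M}^\sigma$ --- available through Shahidi's theory when $\sigma$ is generic and through known Plancherel data otherwise --- which is exactly why the hypotheses on $\phi_\sigma$ are imposed. A secondary, more formal point to check is that the parameter $\phi_\pi$ so constructed has the infinitesimal parameter predicted for $\pi$ by the Langlands correspondence, which follows from compatibility of the correspondence for $M$ with normalized parabolic induction.
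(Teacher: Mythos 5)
This statement is not proved in the paper at all: it is quoted, with attribution, from \cite{heiermannorbit}*{Theorem 6.3} and used as a black box in Appendix~\ref{sec:discrete}, so there is no internal proof to compare yours against. Your reconstruction is a reasonable outline of Heiermann's own strategy: square-integrable subquotients of $I_P^G(\sigma\otimes\chi_\lambda)$ are located at residue points (poles of maximal order) of the Harish--Chandra $\mu$-function, the root-datum dictionary between the $\mu$-function of the cuspidal pair and the centralizer $\hat{M}^\sigma$ of $\phi_\sigma|_{W_F}$ converts maximality of the pole order into the $q$-distinguished condition of Definition~\ref{q-dist}, Proposition~\ref{4.7} then produces the distinguished nilpotent $N_{\lambda,\sigma}$ and hence the map $\phi_{\sigma,\lambda}$, and the resulting $\phi_\pi$ is checked to be admissible, discrete, and determined by $\pi$. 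Two caveats: first, you explicitly defer the analytic-to-geometric dictionary (the ``hard part'') to Heiermann's earlier sections, so what you have is a guide to his proof rather than a proof --- which is acceptable here precisely because the theorem is imported, but it should not be mistaken for an independent argument; second, your justification for the triviality of the compact part of $s_\lambda$ (via a putative positive-dimensional family of residue points) is not how it actually arises: in Heiermann's normalization the unitary part of the inducing character is absorbed into the cuspidal $\sigma$, so $\chi_\lambda$ is given by a real point $\lambda$ of $a_M^*$ and $s_\lambda$ is hyperbolic by construction. These do not affect the correctness of the statement, only the fidelity of the sketch to the cited proof.
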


With some minor additional hypothesis (in 6.4 of \cite{heiermannorbit}) to insure the parameter $\phi_\sigma$ is necessarily associated to a cuspidal representation $\sigma$, Heiermann shows that any discrete admissible Langlands parameter from $W_F\times \SL_2(\C)$ to ${}^{L}G$ is equivalent to the above (i.e., $\phi_\pi$ for $\pi$ a square integrable representation).

\begin{proof}[Proof of Proposition \ref{discrete}]
We use the definition of open Langlands parameter as given in Definition \ref{def:open}. 
By Heiermann's Theorem \ref{heiermanndiscrete}, we have that $\lambda$, denoted $s_\lambda$ in Heiermann's notation, is $q$-distinguished in $\hat{M}^\sigma$, the connected centralizer of $\phi_\sigma|_{W_F}$ in ${}^{L}G$, which corresponds to $\dualgroup{G_\lambda}$ in the notations of \cite{CFMMX}*{Section 5.3}. 
By Proposition \ref{4.7}, the nilpotent element $N_{\sigma, \lambda}$ (see also Prop. 4.9 in \cite{heiermannorbit}) is a distinguished nilpotent element in the Lie algebra of $\hat{M}^\sigma$ and is uniquely (up to a scalar) determined by $s_\lambda:= \lambda$ (Prop. 4.9 in \cite{heiermannorbit}). 
Further by \cite{heiermannorbit}*{Prop. 4.9} again, as well as the few first words of Section 5.6 in \cite{carter}, we note that the centralizer of $s_\lambda$ in $\dualgroup{G_\lambda}:= \hat{M}^\sigma$ is a Levi subgroup, $L_J$ whose Lie algebra is $\hat{\m}_J^\sigma(0) = \lev_J$. Now, we can apply Propositions 5.8.4 and 5.8.7 in \cite{carter} which state that given a distinguished nilpotent with associated parabolic $P_J$, the $L_J$-orbit of $e$ is open and dense in $\g_J(2)$.
Applying these to $N_{\sigma, \lambda}$, we obtain that the $L_J$-orbit of $N_{\sigma, \lambda}$ is open and dense in $\hat{\m}_J^\sigma(2)$, which, by definition, is $V_{\lambda_{\hbox{nr}}}$. Finally, we use \cite{CFMMX}*{Lemma 5.3}, i.e., the fact that $V_{\lambda_{\hbox{nr}}} = V_\lambda$, and $H_{\lambda_{\hbox{nr}}} =H_{\lambda}^0= L_J$, to rephrase this as: The $H_\lambda^0$-orbit of $N_{\sigma, \lambda}:= x_\phi$ is open and dense in $V_\lambda$. If $H_\lambda$ is disconnected, observe that since the $H_\lambda^0$-orbit is the largest, open one, the $H_\lambda$-one will necessary be open too. 
\end{proof}

\subsection{Tempered parameters are open}\label{ssec:tempered-open}

%In this apprendix's introduction, we also recall briefly some aspects of Bala-Carter theory which will be useful in applying Carter's Propositions \ref{carter1} and \ref{carter2}. The natural homomorphism $G\rightarrow G/Z(G)$ induces a bijection between the unipotent conjugacy classes of $G$ and those of $G/Z(G)$. Further, in view of the bijection between unipotent conjugacy classes of $G$ and nilpotent $Ad(G)$-orbits on the Lie algebra $\g$(a result of Springer-Steimberg, see \cite{Borelseminar}*{Chapter E}), this theory has focused on the latter. Following Carter's book \cite{carter}*{Sections 5.5 to 5.8}, most of what is presented here will assume $\mathcal{G}$ is a complex reductive group which is adjoint semi-simple. The Definition \ref{q-dist} supposes similar assumptions, however, the two results of Carter crucial to our argumentation do not explicitly. We denote $B$ a Borel subgroup of $\mathcal{G}$, $T$ its torus, $U$ its unipotent radical, and their respective Lie algebras  $\g$, $\T$, $\U$. Let $\Phi = \Phi(G,T)$ be the set of roots of $T$ on $\g$, where the choice of Borel gives a set of positive roots, and $\Delta$ a basis of $\Phi$.
%We will be particularly interested in distinguished nilpotent elements (orbits):
%\begin{definition}[Distinguished nilpotent element]
%An element $N$ of $\g$ is distinguished if and only if exp($N$) is not contained in any proper Levi of $\mathcal{G}$.
%\end{definition}

%Using the local Langlands correspondance for tori (as described in Heiermann's) we can associate to 

The next result requires the following desiderata for the local Langlands correspondence given in \cite{abps} based on \cite{Borel:Corvallis}. 
To express this, we need to introduce enhanced Langlands parameters, $\Phi^e(G)$ of a reductive $p$-adic group $G$, in the sense that we add an irreducible complex representation $\rho$ of the $S$-group of $\phi$: $S_\phi =Z_{\hat{G}}(\phi)\slash Z_{\hat{G}}(\phi)^0Z(\hat{G})^{\Gamma_F}$. Recall also that tempered representations (see \cite{wald}*{Proposition III.4.1}) of a group $G$ are subquotients (equivalently, direct summands) of the normalized parabolic induction of an essentially discrete series representation of a Levi subgroup. Let $M$ be a Levi subgroup of $G$. 

%\begin{desiderata}[Desiderata for the Local Langlands Correspondence, Borel \cite{Borel:Corvallis}]\label{boreldesiderata}
\leavevmode
\begin{itemize}
\item If $(\phi_M , \varrho_M) \in \Phi^e(M)$ is bounded then
%\begin{equation} \label{eq1}
\[
\left\{\pi_{\phi, \rho}: \phi = \phi_M ~~ \hbox{composed with} ~~ {}^{L}\eta: {}^{L}M \rightarrow {}^{L}G, \rho|_{S_{\phi_M}}
~ \hbox{contains} ~ \varrho_M \right\} 
\]
%\end{equation}
equals the set of all irreducible constituents of the parabolically induced representation $I_P^G(\pi_{\phi_M,\varrho_M})$.
\item Furthermore if $\phi_M$ is discrete but not necessarily bounded then the set above is the set of Langlands constituents of $I_P^G(\pi_{\phi_M,\varrho_M})$.
\end{itemize}
%\end{desiderata}

\begin{proposition}
Let $G$ be a connected reductive group over $F$. Assume the desiderata above for $G$. Let $\phi_\pi$ be a tempered Langlands parameter for $G$ and let $\lambda_\phi$ be its infinitesimal parameter. Then $\phi_\pi$ is open in $V_\lambda$. 
\end{proposition}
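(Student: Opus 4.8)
The plan is to deduce the tempered case from the square-integrable case of Proposition~\ref{discrete} by descending to a Levi subgroup, using the parabolic-induction desiderata recalled above. First, by the characterization of tempered representations, a tempered $\pi\in\Pi_{\phi_\pi}(G)$ is a direct summand of $I_P^G(\sigma)$ for some parabolic $P=MN$ and some square-integrable representation $\sigma$ of $M$. Writing $\sigma=\pi_{\phi_M,\varrho_M}$ with $(\phi_M,\varrho_M)\in\Phi^e(M)$ bounded and $\phi_M$ discrete, the desiderata give $\phi_\pi={}^{L}\eta\circ\phi_M$. Applying Proposition~\ref{discrete} to the square-integrable representation $\sigma$ of $M$ then shows that $\phi_M$ is open: $x_{\phi_M}$ lies in the open $H_{\lambda_M}$-orbit of $V_{\lambda_M}$, where $\lambda_M$ is the infinitesimal parameter of $\phi_M$.

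Next I would transfer openness from $M$ to $G$. The infinitesimal parameter of $\phi_\pi$ is $\lambda={}^{L}\eta\circ\lambda_M$, so $V_{\lambda_M}=V_\lambda\cap\widehat{\mathfrak{m}}$ and $x_{\phi_\pi}=x_{\phi_M}$ under $\widehat{\mathfrak{m}}\hookrightarrow\widehat{\mathfrak{g}}$. By Proposition~\ref{prop:openL} it suffices to show that $L(s,\phi_\pi,\Ad)$ is regular at $s=1$. Decomposing $\Ad_{\widehat{G}}|_{{}^{L}M}=\Ad_{\widehat{M}}\oplus r$, where $r$ is the representation of ${}^{L}M$ on $\widehat{\mathfrak{g}}/\widehat{\mathfrak{m}}$, gives $L(s,\phi_\pi,\Ad)=L(s,\phi_M,\Ad_{\widehat{M}})\cdot L(s,\phi_M,r)$. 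The first factor is regular at $s=1$ by Proposition~\ref{prop:openL} applied to $M$, since $\phi_M$ is open. For the second factor, a pole at $s=1$ would --- as in the proof of Proposition~\ref{prop:openL} --- produce a nonzero $y\in(\widehat{\mathfrak{g}}/\widehat{\mathfrak{m}})^{I_F}$ annihilated by the Deligne operator $N=N_{\phi_M}$ and satisfying $\Ad(\lambda_M(\frob))\,y=q^{-1}y$. Viewing $\widehat{\mathfrak{g}}/\widehat{\mathfrak{m}}$ as an $\mathfrak{sl}_2$-module through $\Ad\circ(\phi_M|_{\SL_2})$, boundedness of $\phi_M|_{W_F}$ forces $\Ad(\phi_M(\frob,1))$ to have eigenvalues of absolute value $1$, so the eigenvalue $q^{-1}$ of $\Ad(\lambda_M(\frob))=\Ad(\phi_M(\frob,1))\,\Ad(\phi_M(1,d_\frob))$ on $y$ places $y$ in the weight $-2$ space; but $N$ acts as the raising operator, whose kernel is concentrated in non-negative weights. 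Hence no such $y$ exists, $L(s,\phi_\pi,\Ad)$ is regular at $s=1$, and $\phi_\pi$ is open.

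I expect the transfer step to be the main obstacle. One cannot simply argue $V_\lambda=V_{\lambda_M}$: in general $V_\lambda$ is strictly larger than $V_{\lambda_M}$, and $x_{\phi_\pi}$ need not be a distinguished nilpotent in the relevant centralizer even though $x_{\phi_M}$ is --- in contrast with the square-integrable case, where distinguishedness is exactly what powers Proposition~\ref{discrete}. The genuinely new input is the $\mathfrak{sl}_2$-weight computation above, equivalently the statement that a Langlands--Shahidi $L$-function $L(s,\phi_M,r)$ attached to a bounded parameter $\phi_M$ is holomorphic at $s=1$. Some care is also needed so that the relevance conditions and the Frobenius normalization flagged after the definition of $L(s,\phi,\Ad)$ are applied consistently for both $M$ and $G$, but this is routine.
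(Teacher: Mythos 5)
Your argument is correct, but it is a genuinely different route from the one taken in the appendix. The paper also begins by writing the tempered parameter as ${}^{L}\eta\circ\phi_M$ for a bounded discrete parameter $\phi_M$ of a Levi $M$, but then stays entirely geometric: it invokes Heiermann's Theorem~\ref{heiermanndiscrete} and Proposition~\ref{4.7} to realize $\lambda_{\phi_M}$ as $q$-distinguished with an associated distinguished nilpotent $e$, and then transfers openness from $\widehat{\mathfrak{m}}$ to $\widehat{\mathfrak{g}}$ by citing Ranga Rao's result that the orbit of $e$ under the centralizer $M_0=H_\lambda^0$ of the grading element is open in the degree-two piece, i.e.\ in $V_\lambda$. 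You instead transfer openness through the $L$-function criterion of Proposition~\ref{prop:openL}: factor $L(s,\phi_\pi,\Ad)=L(s,\phi_M,\Ad_{\widehat{M}})\,L(s,\phi_M,r)$, handle the first factor by Proposition~\ref{discrete} plus Proposition~\ref{prop:openL} for $M$, and kill a hypothetical pole of the second factor by the $\mathfrak{sl}_2$-weight argument: boundedness of $\phi_M|_{W_F}$ forces the pole certificate $y$ (which, as in the proof of Proposition~\ref{prop:openL}, lies in the $V^*$-direction eigenspace) into weight $-2$, while $\ker(\ad N)$ sits in non-negative weights. That weight computation is correct, and you have the sign of the weight right precisely because you anchored the certificate to the $V_\lambda^*$ side as in Proposition~\ref{prop:openL}; with the opposite Frobenius convention the argument would collapse, so the care you flag is genuinely needed. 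Two remarks on what each approach buys. Your key step is really the standard fact that for a bounded parameter any local $L$-factor $L(s,\phi_M,r)$ is holomorphic for $\Re s>0$, and this applies just as well to $\Ad_{\widehat{M}}$ and indeed to $\Ad_{\widehat{\mathfrak{g}}}\circ\phi_\pi$ itself: applied directly to $\phi_\pi$, your weight argument proves the proposition in one stroke, with no Levi descent, no appeal to Proposition~\ref{discrete}, and none of Heiermann's hypotheses --- arguably closer in spirit to the duality proof of Proposition~\ref{prop: temper=arthur+open} in the main text. The appendix's longer route is chosen deliberately to exhibit the link between open parameters, $q$-distinguished semisimple elements and distinguished nilpotent orbits, which your $L$-function argument does not illuminate.
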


\begin{proof}
Any tempered Langlands parameter can be understood as ${}^{L}\eta (\phi_M)$ for a discrete unbounded parameter of a Levi $M$ of $G$, such that the inducing parabolic, as used in the desiderata above, is $P=MU$. 
Then by the proof of the Proposition \ref{discrete}, we have that $\lambda_{\phi_M}$ is $q$-distinguished and $N_{\sigma, \lambda_{\phi_M}}$ is a nilpotent distinguished element in $  \dualgroup{\m}$. 
Now, given this distinguished nilpotent element $N_{\sigma, \lambda_{\phi_M}} \in \dualgroup{\m}$ we consider it as an element in $\dualgroup{\g}$. We cannot argue by showing that $\lambda_{\phi_M}$ is $q$-distinguished in $\dualgroup{\g}$, as this is not true in general. Let us denote $N_{\sigma, \lambda_{\phi_M}}:=e$ and let $L_J$ be the centralizer in $\dualgroup{M}$ of $\lambda_{\phi_M}$, also denoted $H_{\lambda_{\phi_M}}$. Up to conjugation by a Weyl group element, the semi-simple element $\lambda$ ($\lambda_{\phi_M}$ now seen as an element in $\dualgroup{\g}$) can be taken to be in the positive Weyl chamber, without impacting the geometrical argument we are using below. This semi-simple element defines uniquely a parabolic subgroup $P_0$ (see the beginning of this appendix) of $\dualgroup{G}$ and a Levi, $M_0 \supseteq L_J$, where $M_0 = H_\lambda^0$. 
The element $e$ is a nilpotent element in $\dualgroup{\g}$, and $(\lambda, e)$ naturally forms part of a Jacobson-Morozov triple in $\dualgroup{\g}$. We now wish to show that the $M_0$-orbit of $e$ is open in $\dualgroup{\g}(2)$. This follows from the argument presented in \cite{rangarao}*{Section 3}: There it is shown that $\Ad(M_0)e = \left\{\Ad(m).e: m \in M_0\right\}$ is open in the Hausdorff topology of $\dualgroup{\g}(2)$. If $H_\lambda$ is disconnected, observe that since the $H_\lambda^0$-orbit is the largest, open one, the $H_\lambda$-one will necessary be open too, which completes the proof.
\end{proof}

\subsection{Example with the case of classical groups}
\label{appclassical}

Following from the observation made in the above subsection that discrete Langlands parameters have $q$-distinguished infinitesimal parameters, we deduce that the restriction to $\SL_2(\C)$ of the Langlands parameter $\phi$ of an irreducible discrete series representation of a classical group whose root system is of type $B,C,D$ is necessarily written as the direct sum of $\nu_i$ with \textbf{distinct} odd or even integers $i$, where $\nu_i$ stands for the irreducible representation of $\SL_2$ of dimension $i$: $\hbox{Sym}^{i-1}$ (see also \cite{GR}). Indeed, evaluating each of the $\nu_i= \hbox{Sym}^{i-1}$ where $i$ runs over the integers of a given partition (characterizing the distinguished nilpotent orbit) at $d_w$ gives the infinitesimal parameter $\lambda(w)$
\footnote{In case the root system is of type $A$, the restriction to $\SL_2(\C)$ of the Langlands parameter is a unique $\nu_i$ of maximal dimension}. The partition determines the shape of the endoscopic elliptic subgroup ${}^{L}M_E$ of ${}^{L}G$ the discrete parameter factors through. Consider the Jordan form of $x_\phi= \log\phi(1, e)$ in ${}^{L}M_E$, it is a distinguished nilpotent element whose orbit under $H_{\lambda_{{}^{L}M_E}}$ is trivially open and dense in $V_{\lambda_{{}^{L}M_E}}$. Indeed, we can \textit{see} that $x_\phi$ fills in all the non-zero entries in $V_{\lambda_{{}^{L}M_E}}$. We can now use a permutation matrix (i.e a Weyl group element) which allows us to reorganize the $q$-exponents in a decreasing fashion to consider $\lambda_{{}^{L}M_E}$ in ${}^{L}G$ rather than in ${}^{L}M_E$. Said differently, the variety $V_\lambda$ is usually more easily described when the infinitesimal parameter is a decreasing sequence of $q$-exponents. 

Let us denote $\lambda$ the result after permutation. The permutation matrix also acts on $x_\phi$ in a way that each non-zero entry at the intersection of a line with diagonal entry $q^i$ and a column with diagonal entry $q^j$ will follow those diagonal entries, as they are moved along the diagonal, to remain at the intersection of their line and column. 

In \cite{CFMMX}*{10.2.1} and in \cite{benesh}, the Vogan variety, for classical groups, has been described as a product of Hom-spaces $\Hom(E_{q^i}, E_{q^{i+1}})$ where $E_{q^i}$ stands for the $q^i$-eigenspace in $V_\lambda$. Here, we also use the description of the open orbit in terms of maximal ranks of maps between the eigenspaces $E_{q^i}$ given in \cite{CFMMX}*{10.2.1}. 

We denote the map ${}^{L}\eta : {}^{L}M_E \rightarrow {}^{L}G$ and aim to show that ${}^{L}\eta(x_\phi)$ will remain in the open orbit of $V_\lambda$.  The key observation is that the non-zero entries in $x_\phi$, which also stand for maps between $E_{q^i}$-eigenspaces in $\prod_{i=1}^j V_{\lambda_i}$, will remain maps between those same eigenspaces in $V_\lambda$ after permutation, hence preserving the ranks of such maps and their compositions. 

Let us formulate this fact in a lemma, starting with a definition.
%
%\begin{definition} \label{degre1}
Let $K$ be an algebraically closed field. Let $\lambda$ be a diagonalizable linear endomorphism on a finite dimensional $K$-vector space, $E \cong K^n$. After selecting a basis where $\lambda$ is diagonal, we may write $\lambda$ a as block diagonal matrix where each block is given by integers or half-integers $q$-exponents, centered around 0. Let us denote $E_q^i$ the $q^i$-eigenspace, for any such exponent $i$. A linear endomorphism $S_\lambda$ is said to be \textit{of degree +1} if for any unique $v \in E_{q^i}$ and unique $w \in E_{q^{i+1}}$, there is a unique map $S_\lambda(v)=w$. Further, if the dimensions of $E_q^i$ and $E_q^{i+1}$ differ, then the rank of the map between them needs to be $\min(\dim(E_{q^i}), \dim(E_{q^{i+1}}))$. 
%\end{definition}

\begin{lemma} \label{endo} 
Let $K$ be an algebraically closed field. Let $\lambda$ be a diagonalizable linear endomorphism on a finite dimensional $K$-vector space, $E \cong K^n$. Viewed as a linear endomorphism of $K^n$ and selecting a basis $\mathcal{B}$ of $K^n$ where $\lambda$ is diagonal, we may write $\lambda$ as block diagonal matrix where each block is given by integers or half-integers $q$-exponents, centered around 0. We take $S_\lambda$ a degree +1 (as defined above). Then the linear endomorphism $S_\lambda$, viewed as a linear endomorphism of $K^n$, is invariant under change of basis, in particular under reordering of $\mathcal{B}$.
\end{lemma}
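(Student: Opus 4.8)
The plan is to show that every ingredient in the definition of a degree $+1$ endomorphism $S_\lambda$ is intrinsic to the pair $(E,\lambda)$ and has nothing to do with the ordering of the diagonalizing basis $\mathcal{B}$. The first step is to note that the eigenspace decomposition $E = \bigoplus_i E_{q^i}$ is canonically attached to $\lambda$ alone: since $\lambda$ is diagonalizable, $E_{q^i} = \ker(\lambda - q^i \id_E)$, and this subspace does not reference any basis. Reordering $\mathcal{B}$ — or more generally replacing $\mathcal{B}$ by another basis in which $\lambda$ is diagonal — corresponds to conjugating $\lambda$ by some $P \in \GL(E)$ (a permutation matrix in the case of reordering), and such a $P$ merely carries $E_{q^i}$ to $P(E_{q^i})$, the $q^i$-eigenspace of $P\lambda P^{-1}$; the abstract graded vector space is unchanged.

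The second step is to unwind the definition: "$S_\lambda$ has degree $+1$'' means precisely that (a) $S_\lambda(E_{q^i}) \subseteq E_{q^{i+1}}$ for every exponent $i$, and (b) $\rank\!\left(S_\lambda|_{E_{q^i}}\right) = \min(\dim E_{q^i}, \dim E_{q^{i+1}})$ whenever these two dimensions differ. Both (a) and (b) are assertions about the operator $S_\lambda$ and the canonical subspaces $E_{q^i}$; they mention neither a choice of basis vectors inside the $E_{q^i}$ nor the order in which the eigenvalues are listed along the diagonal of $\lambda$. Hence, for $P$ a change of basis as above, the operator $S' := P S_\lambda P^{-1}$ satisfies $S'(P E_{q^i}) = P\big(S_\lambda(E_{q^i})\big) \subseteq P E_{q^{i+1}}$ and $\rank\!\left(S'|_{P E_{q^i}}\right) = \rank\!\left(S_\lambda|_{E_{q^i}}\right)$, since $P$ is invertible. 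Therefore $S'$ is again a degree $+1$ endomorphism, now with respect to $P\lambda P^{-1}$, and moreover the ranks of all component maps $E_{q^i} \to E_{q^{i+1}}$ and of their compositions are preserved. This is the intended sense in which $S_\lambda$ is "invariant" under reordering of $\mathcal{B}$.

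The last step is to feed this back into the geometry used just before the lemma: by \cite{CFMMX}*{10.2.1} the open $H_\lambda$-orbit in $V_\lambda$ is cut out exactly by the maximal-rank conditions on the maps between consecutive eigenspaces and their compositions, which are precisely the conditions shown above to be preserved. Consequently, if $S_\lambda$ (in the application, $x_\phi$, respectively ${}^L\eta(x_\phi)$) lies in the open orbit of $V_\lambda$, so does its image under any reordering of $\mathcal{B}$, now viewed inside $V_{P\lambda P^{-1}}$. I expect the only real care needed — the "hard part," such as it is — to be bookkeeping: one must observe that a permutation of $\mathcal{B}$ never changes which eigenspace a given basis vector belongs to, so that the notion of "consecutive exponents $i, i+1$'' (for the integer- or half-integer powers centered at $0$ making up the blocks of $\lambda$) still makes sense after reordering, and one must be scrupulous about testing the degree $+1$ property against the reorganized $\lambda$ rather than the original one. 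Once that is set up, the argument is the pure linear algebra above.
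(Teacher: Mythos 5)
Your proposal is correct and follows essentially the same route as the paper, whose entire proof is the one-line observation that linear endomorphisms (and hence the eigenspace decomposition, the degree $+1$ condition, and the attendant rank conditions) are intrinsic and therefore unaffected by a change or reordering of basis. You have simply spelled out in detail what the paper leaves as "well-known," including the conjugation bookkeeping and the preservation of the maximal-rank conditions, which is a faithful elaboration rather than a different argument.
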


\begin{proof}
It is well-known that linear endomorphisms are invariant under change of basis. 
\end{proof}

The Lemma \ref{endo} can also be used in the context of tempered parameters, using the Borel desiderata for the Langlands correspondence as recalled above. Then, we start from a discrete Langlands parameter $\phi_M$, which we have proven is open, and consider the embedding ${}^{L}\eta(\phi_M)$. The parameter $\phi_M$ decomposes in a $\GL$ part and a classical part, each contributing to its infinitesimal character $\lambda_{\phi_M}= (\lambda_{\GL}, \lambda_c)$. Whether the segments (the sequence of $q$-exponents in each part of $\lambda$) are nested or disjoint, Lemma \ref{endo} shows that, when reorganizing $\lambda$ so that it is a decreasing sequence of (half)-integers, we simply \textit{add} multiplicities of $q$-exponents, and add the ranks of the $\Hom(E_{q^i}, E_{q^{i-1}})$'s (all equal to 1 before reorganization), as well as their compositions, so that the ranks remain maximal with respect to multiplicities, hence proving openness. 

\begin{proposition}
Let $\phi$ be a Langlands parameter of a classical split group $G$ or any of its pure inner forms, factoring through some subgroup ${}^{L}H$ of ${}^{L}G$, and with infinitesimal parameter $\lambda$. If $\phi$ is such that $x_{\phi|_{{}^{L}H}}$ lies in the open orbit of the Vogan variety $V_{\lambda|_{{}^{L}H}}$, then it lies in the open orbit in $V_{\lambda}$. 
Further, this implies that parabolic induction preserves the openness of a Langlands parameter. 
\end{proposition}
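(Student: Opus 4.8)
The plan is to reduce everything to the combinatorial model of Vogan varieties for classical groups together with Lemma~\ref{endo}. Recall from \cite{CFMMX}*{10.2.1} and \cite{benesh} that, once the $q$-exponents of $\lambda$ are listed in decreasing order, the Vogan variety $V_\lambda$ of a classical group is a product $\prod_i \Hom(E_{q^{i}}, E_{q^{i-1}})$ of spaces of linear maps between consecutive $q$-eigenspaces $E_{q^i}$ of $\lambda(\Fr)$ on the standard representation of $\dualgroup{G}$ (subject to the self-duality constraint coming from the type $B$, $C$, $D$ form), and that the open $H_\lambda$-orbit is exactly the locus where each such map, and every composition of consecutive such maps, has maximal rank with respect to the multiplicities $\dim E_{q^i}$. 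Thus $x\in V_\lambda$ is open if and only if it is ``of degree $+1$ and of maximal rank'' in the sense of Lemma~\ref{endo}.

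First I would set up the comparison between $V_{\lambda|_{{}^{L}H}}$ and $V_\lambda$. Since $\phi$ factors through ${}^{L}H$, the point $x_\phi = {}^{L}\eta(x_{\phi|_{{}^{L}H}})$ lies in $\dualgroup{\h}\subseteq \dualgroup{\g}$, and each eigenspace $E_{q^i}$ for $V_\lambda$ decomposes as the corresponding eigenspace for $V_{\lambda|_{{}^{L}H}}$ plus the $q^i$-eigenspaces contributed by the complement of $\dualgroup{H}$ in $\dualgroup{G}$; in particular, after applying a Weyl-group (permutation) element to restore the decreasing order of exponents, the multiplicities of the various $q$-exponents simply \emph{add up} under the embedding. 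The maps underlying $x_\phi$ between consecutive eigenspaces of $V_\lambda$ are then block-diagonal, the blocks being exactly the (maximal-rank, by hypothesis) maps underlying $x_{\phi|_{{}^{L}H}}$, together with possibly trivial blocks. By Lemma~\ref{endo} these maps are unchanged under the reordering of the basis, so the rank of each of them, and of each of their compositions, is the sum of the corresponding ranks in the $H$-picture.

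The key step is then to verify that ``maximal rank $+$ maximal rank (or $+$ zero) $=$ maximal rank'' for the direct-sum maps: when the dimensions $\dim E_{q^i}$ add up, a block-diagonal map with maximal-rank blocks between $E_{q^i}$ and $E_{q^{i-1}}$ has rank $\min(\dim E_{q^i},\dim E_{q^{i-1}})$, and similarly for compositions across several steps, and the type $B$/$C$/$D$ self-duality imposed on $V_\lambda$ does not force any of these ranks to drop. For a single map this is immediate from $\rank(A\oplus B)=\rank A+\rank B$; for compositions one uses that inside the open orbit of each factor the relevant partial ranks are already maximal, so their sums are again maximal. I expect this rank-additivity argument, carried out uniformly over nested versus disjoint segments of $q$-exponents, to be the main obstacle — it is precisely the content of the informal paragraph preceding the proposition, and it must be made precise using the explicit inequalities characterizing the open orbit in \cite{CFMMX}*{10.2.1}.

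Finally, the statement about parabolic induction follows by specializing $H$ to a Levi subgroup $M$ of $G$ with ${}^{L}\eta : {}^{L}M \hookrightarrow {}^{L}G$ the standard embedding: if $\phi_M$ is an open Langlands parameter for $M$, the argument above shows $\phi = {}^{L}\eta(\phi_M)$ is open for $G$; combined with the Borel desiderata recalled in Section~\ref{ssec:tempered-open}, which identify the generic constituent of $I_P^G(\pi_{\phi_M,\varrho_M})$ with a representation having Langlands parameter $\phi$, this shows that openness is preserved under parabolic induction.
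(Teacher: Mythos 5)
You follow essentially the same route as the paper: its proof of this proposition is little more than a pointer to Lemma~\ref{endo}, to the description of $V_\lambda$ for classical groups as a product of $\Hom$-spaces with the open orbit cut out by maximal-rank conditions (\cite{CFMMX}*{Section 10.2.1}), and to the multiplicity-and-rank bookkeeping under the reordering Weyl element carried out in the preceding subsections; your block-diagonal reduction and your specialization of ${}^{L}H$ to a Levi for the parabolic-induction statement match that outline.

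The genuine problem is the step you declare immediate. For maximal-rank blocks $A\colon K^{a}\to K^{a'}$ and $B\colon K^{b}\to K^{b'}$, rank additivity gives $\rank(A\oplus B)=\min(a,a')+\min(b,b')$, which is in general strictly smaller than $\min(a+b,a'+b')$: take $(a,a')=(2,1)$ and $(b,b')=(1,2)$. This is not a hypothetical worry. For the Levi $M=\GL_3\times\GL_3$ of $\GL_6$, with one factor carrying the exponents $(\tfrac12,\tfrac12,-\tfrac12)$ and the other $(\tfrac12,-\tfrac12,-\tfrac12)$, each factor's open point is a rank-one map (e.g.\ $\Sym^1\oplus\nu^{\pm 1/2}$), yet their direct sum is a rank-two element of $\Hom(\CC^3,\CC^3)=V_\lambda$ and so lies outside the open orbit. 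Hence ``open for ${}^{L}H$'' does not by itself propagate to ``open for ${}^{L}G$'', and the rank-additivity argument would fail exactly where you place the weight of the proof. What rescues the situations the paper actually uses (discrete parameters of Levis, or parameters factoring through the elliptic endoscopic subgroup ${}^{L}M_E$) is that there the segments of $q$-exponents are centered and hence nested, so the multiplicities at consecutive exponents are unimodal and the minima do add up, for single maps and for all their compositions; this interleaving property, not generic rank additivity, is the real content of the informal paragraph you cite, and your proposal (like the paper's two-line proof) never establishes it. You do flag the composition step as the main obstacle, but the single-map case needs exactly the same care, so to close the gap you must either impose and use this structure of the restricted parameter or restrict the class of subgroups ${}^{L}H$ accordingly.
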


\begin{proof}
A direct consequence of the Lemma \ref{endo}, and following the same argumentation as to prove discrete and tempered parameters are open just above. We use the description of the Vogan variety for classical groups in terms of product of homomorphism spaces given in \cite{CFMMX}*{Chapter 10}. 
\end{proof}

\begin{bibdiv}
\begin{biblist}

\iffalse

\bib{AAM}{article}{
   author={Adams, Jeffrey},
   author={Arancibia, Nicolás},
   author={Mezo, Paul},
   title={Equivalent definitions of $A$-packets for Real classical groups},
   Journal={Memoirs of the American Mathematical Society, to appear},
     note={\url{https://arxiv.org/abs/arXiv:2108.05788}},
}
\fi

\bib{ABV}{book}{
   author={Adams, Jeffrey},
   author={Barbasch, Dan},
   author={Vogan, David A., Jr.},
   title={The Langlands classification and irreducible characters for real reductive groups},
   series={Progress in Mathematics},
   volume={104},
   publisher={Birkh\"{a}user Boston, Inc., Boston, MA},
   date={1992},
   pages={xii+318},
   isbn={0-8176-3634-X},
   %review={\MR{1162533}},
   doi={10.1007/978-1-4612-0383-4},
}

\bib{abps}{article}{
 author={Aubert, Anne-Marie},
   author={Baum, Paul},
   author={Plymen, Roger},
   author={Solleveld, Maarten},
   title={Conjectures about $p$-adic groups and their noncommutative
   geometry},
   conference={
      title={Around Langlands correspondences},
   },
   book={
      series={Contemp. Math.},
      volume={691},
      publisher={Amer. Math. Soc., Providence, RI},
   },
   isbn={978-1-4704-3573-8},
   date={2017},
   pages={15--51},
   %review={\MR{3666049}},
}

\iffalse
\bib{Arthur:Conjectures}{article}{
   author={Arthur, James},
   title={Unipotent automorphic representations: conjectures},
   note={Orbites unipotentes et repr\'{e}sentations, II},
   journal={Ast\'{e}risque},
   number={171-172},
   date={1989},
   pages={13--71},
   issn={0303-1179},
   %review={\MR{1021499}},
}
\fi

\iffalse
\bib{Arthur:Character}{article}{
   author={Arthur, James},
   title={On local character relations},
   journal={Selecta Math. (N.S.)},
   volume={2},
   date={1996},
   number={4},
   pages={501--579},
   issn={1022-1824},
   %review={\MR{1443184}},
   doi={10.1007/PL00001383},
}
\fi

\bib{Aubert-Xu}{article}{
author={Aubert, Anne-Marie},
author={Xu, Yujie},
title={The explicit local Langlands correspondence for $G_2$},
year={2022},
note={preprint, arXiv:2208.12391}
}

\bib{Arthur:book}{book}{
   author={Arthur, James},
   title={The endoscopic classification of representations},
   series={American Mathematical Society Colloquium Publications},
   volume={61},
   note={Orthogonal and symplectic groups},
   publisher={American Mathematical Society, Providence, RI},
   date={2013},
   pages={xviii+590},
   isbn={978-0-8218-4990-3},
   %review={\MR{3135650}},
   doi={10.1090/coll/061},
}

\iffalse
\bib{BBD}{article}{
   author={Be{\u\i}ılinson, A. A. },
   author={Bernstein, J. },
   author={Deligne, P. },
   title = {Faisceaux pervers, Analysis and topologyon singular spaces, I},
   date = {1982},
   journal = {Astérisque}
}

\bib{Borelseminar}{book}{
author={Borel, A, et al},
,
title={Seminar on Algebraic Groups and
Related Finite Groups},
series={Lecture Notes in Math.}, 
volume={131}, 
publisher={Springer-Verlag, Berlin},
year= {1970}
}
\fi

\bib{benesh}{misc}{
author = {Benesh, J},
title= {Equivariant resolutions of singularities for orbits in generalized quiver varieties arising in the local Langlands program for p-adic groups},
year = {2022},
Note={Master Thesis, University of Lethbridge}
}

\iffalse
\bib{meli}{article}{
AUTHOR = {Bertoloni Meli, Alexander},
Author={Imai, Naoki},
Author={Youcis, Alex},
     TITLE = {The {J}acobson--{M}orozov {M}orphism for {L}anglands
              {P}arameters in the {R}elative {S}etting},
   JOURNAL = {Int. Math. Res. Not. IMRN},
      YEAR = {2024},
    NUMBER = {6},
     PAGES = {5100--5165},
      ISSN = {1073-7928,1687-0247},
       DOI = {10.1093/imrn/rnad217},
       URL = {https://doi.org/10.1093/imrn/rnad217},
}
\fi

\bib{Borel:Corvallis}{article}{
   author={Borel, A.},
   title={Automorphic $L$-functions},
   conference={
      title={Automorphic forms, representations and $L$-functions},
      address={Proc. Sympos. Pure Math., Oregon State Univ., Corvallis,
      Ore.},
      date={1977},
   },
   book={
      series={Proc. Sympos. Pure Math., XXXIII},
      publisher={Amer. Math. Soc., Providence, R.I.},
   },
   date={1979},
   pages={27--61},
   %review={\MR{546608}},
}

\bib{carter}{book}{author={Carter, R.W.},
  title={Finite Groups of Lie Type: Conjugacy Classes and Complex Characters},
  author={Carter, R.W.},
  isbn={9780471941095},
  %lccn={93017758},
  series={Wiley Classics Library},
  url={https://books.google.ca/books?id=LvvuAAAAMAAJ},
  year={1993},
  publisher={Wiley}
}

\iffalse
\bib{CS}{article}{AUTHOR = {Casselman, William},
Author={Shahidi, Freydoon},
     TITLE = {On irreducibility of standard modules for generic
              representations},
   JOURNAL = {Ann. Sci. \'{E}cole Norm. Sup. (4)},
    VOLUME = {31},
      YEAR = {1998},
    NUMBER = {4},
     PAGES = {561--589},
      ISSN = {0012-9593},
       DOI = {10.1016/S0012-9593(98)80107-9},
       URL = {https://doi.org/10.1016/S0012-9593(98)80107-9},}

\bib{CG}{book}{
   author={Chriss, Neil},
   author={Ginzburg, Victor},
   title={Representation theory and complex geometry},
   series={Modern Birkh\"{a}user Classics},
   note={Reprint of the 1997 edition},
   publisher={Birkh\"{a}user Boston, Ltd., Boston, MA},
   date={2010},
   pages={x+495},
   isbn={978-0-8176-4937-1},
   %review={\MR{2838836}},
   doi={10.1007/978-0-8176-4938-8},
}
\fi

\bib{CFMMX}{article}{
   author={Cunningham, Clifton L. R.},
   author={Fiori, Andrew},
   author={Moussaoui, Ahmed},
   author={Mracek, James},
   author={Xu, Bin},
   title={$A$-packets for $p$-adic groups by way of microlocal vanishing
   cycles of perverse sheaves, with examples},
   journal={Mem. Amer. Math. Soc.},
   volume={276},
   date={2022},
   number={1353},
   pages={ix+216},
   issn={0065-9266},
   isbn={978-1-4704-5117-2; 978-1-4704-7019-7},
   %review={\MR{4391878}},
   doi={10.1090/memo/1353},
}

\iffalse
\bib{CFK}{article}{
   author={Cunningham, Clifton},
   author={Fiori, Andrew},
   author={Kitt, Nicole},
   title={Appearance of the Kashiwara-Saito singularity in the
   representation theory of $p$-adic $\rm GL(16)$},
   journal={Pacific J. Math.},
   volume={321},
   date={2022},
   number={2},
   pages={239--282},
   issn={0030-8730},
%   %review={\MR{4562570}},
   doi={10.2140/pjm.2022.321.239},
}
\fi

\bib{CFZ:cubic}{article}{
   author={Cunningham, Clifton},
   author={Fiori, Andrew},
   author={Zhang, Qing},
  TITLE = {$A$-packets for {$G_2$} and perverse sheaves on cubics},
   JOURNAL = {Adv. Math.},
    VOLUME = {395},
      YEAR = {2022},
     PAGES = {Paper No. 108074, 74},
       DOI = {10.1016/j.aim.2021.108074},
       URL = {https://doi.org/10.1016/j.aim.2021.108074},
}

\bib{CFZ:unipotent}{article}{
   author={Cunningham, Clifton},
   author={Fiori, Andrew},
   author={Zhang, Qing},
   title={Toward the endoscopic classification of unipotent representations of p-adic $G_2$},
   %journal={},
   %volume={},
   date={2022},
   %number={},
   %pages={},
   note={\href{https://arxiv.org/abs/2101.04578}{https://arxiv.org/abs/2101.04578}},
}

\bib{CDFZ:genericity}{article}{
   author={Cunningham, Clifton},
   author={Dijols, Sarah},
   author={Fiori, Andrew},
   author={Zhang, Qing},
   title={Generic representations, open parameters and ABV-packets for $p$-adic groups},
   %journal={},
   %volume={},
   date={2024},
   %number={},
   %pages={},
   note={\href{https://arxiv.org/abs/2404.07463}{https://arxiv.org/abs/2404.07463}},
}

\bib{CR}{article}{
   author={Cunningham, Clifton},
   author={Ray, Mishty},
   title={Proof of Vogan's conjecture on $A$-packets: irreducible
   parameters of $p$-adic general linear groups},
   journal={Manuscripta Math.},
   volume={173},
   date={2024},
   number={3-4},
   pages={1073--1097},
   issn={0025-2611},
   %review={\MR{4704767}},
   doi={10.1007/s00229-023-01490-7},
}

\bib{CR2}{article}{
   author={Cunningham, Clifton},
   author={Ray, Mishty},
title={Proof of Vogan's conjecture on $A$-packets for $\textrm{GL}_n$ over $p$-adic fields},
date={2023},
note={preprint, \href{http://arxiv.org/abs/2302.10300}{http://arxiv.org/abs/2302.10300}},
}

\bib{CS:Steinberg}{article}{
   author={Cunningham, Clifton},
   author={Steele, James},
title={Koszul duality for generalized Steinberg representations of $p$-adic groups},
date={2024},
note={\href{https://arxiv.org/abs/2408.05103}{https://arxiv.org/abs/2408.05103}},
}

\bib{Dynkin}{article}{
Author={Dynkin, Emile},
title= {Semisimple subalgebras of semisimple Lie algebras},
JOURNAL = {Amer.
Math. Soc. Transl},
year= {1957},
}

\bib{DHKM}{article}{
Author={Dat,Jean-Francois},
Author={Helm, David},
Author={Kurinczuk, Robert},
Author={Moss, Gilbert},
title={Local Langlands in families: The banal case},
note={preprint, \href{https://arxiv.org/abs/2406.09283}{arXiv: 24096.09263}}
}

\iffalse
\bib{gicdijols}{article}{
 AUTHOR = {Dijols, Sarah},
     TITLE = {The generalized injectivity conjecture},
   JOURNAL = {Bull. Soc. Math. France},
    VOLUME = {150},
      YEAR = {2022},
    NUMBER = {2},
     PAGES = {251--345},
      ISSN = {0037-9484,2102-622X},
}
\fi

\bib{GI}{article}{
AUTHOR = {Gan, Wee Teck},
Author={Ichino, Atsushi},
     TITLE = {The {G}ross-{P}rasad conjecture and local theta
              correspondence},
   JOURNAL = {Invent. Math.},
    VOLUME = {206},
      YEAR = {2016},
    NUMBER = {3},
     PAGES = {705--799},
      ISSN = {0020-9910},
       DOI = {10.1007/s00222-016-0662-8},
       URL = {https://doi.org/10.1007/s00222-016-0662-8},}

\bib{GGP}{article}{
   author={Gan, Wee Teck},
   author={Gross, Benedict H.},
   author={Prasad, Dipendra},
   title={Symplectic local root numbers, central critical $L$ values, and
   restriction problems in the representation theory of classical groups},
   language={English, with English and French summaries},
   note={Sur les conjectures de Gross et Prasad. I},
   journal={Ast\'{e}risque},
   number={346},
   date={2012},
   pages={1--109},
   issn={0303-1179},
   isbn={978-2-85629-348-5},
   %review={\MR{3202556}},
}

\bib{Gan-Savin}{article}{
author={Gan, Wee Teck},
author={Savin, Gordan },
title={The local Langlands conjecture for $G_2$},
Journal={Forum of Mathematics, Pi},
date={2023},
Volume={11, e28},
pages={1-42},
note={ arXiv:2209.07346},
}

\bib{GP}{article}{
AUTHOR = {Gross, Benedict H.},
Author={ Prasad, Dipendra},
     TITLE = {On the decomposition of a representation of {${\rm SO}_n$}
              when restricted to {${\rm SO}_{n-1}$}},
   JOURNAL = {Canad. J. Math.},
    VOLUME = {44},
      YEAR = {1992},
    NUMBER = {5},
     PAGES = {974--1002},
      ISSN = {0008-414X},
       DOI = {10.4153/CJM-1992-060-8},
       URL = {https://doi.org/10.4153/CJM-1992-060-8},}
       
\bib{GR}{article}{
       AUTHOR = {Gross, Benedict H.},
       Author={ Reeder, Mark},
     TITLE = {Arithmetic invariants of discrete {L}anglands parameters},
   JOURNAL = {Duke Math. J.},
    VOLUME = {154},
      YEAR = {2010},
    NUMBER = {3},
     PAGES = {431--508},
      ISSN = {0012-7094},
       DOI = {10.1215/00127094-2010-043},
       URL = {https://doi.org/10.1215/00127094-2010-043}}

\iffalse
\bib{Haines}{article}{
   author={Haines, Thomas J.},
   title={The stable Bernstein center and test functions for Shimura
   varieties},
   conference={
      title={Automorphic forms and Galois representations. Vol. 2},
   },
   book={
      series={London Math. Soc. Lecture Note Ser.},
      volume={415},
      publisher={Cambridge Univ. Press, Cambridge},
   },
   date={2014},
   pages={118--186},
   %review={\MR{3444233}},
}
\fi

\iffalse
\bib{HLL}{article}{
author={Hazeltine, Alexander},
author={Liu, Baiying},
author={Lo, Chi-Heng},
title={On the intersection of local $A$-packets for
classical groups.},
year={2022},
note={preprint, \href{https://arxiv.org/abs/2201.10539}{arXiv:2201.10539}}
}
\fi

\iffalse
\bib{HLL-Shahidi}{article}{
author={Hazeltine, Alexander},
author={Liu, Baiying},
author={Lo, Chi-Heng},
title={On certain conjecture of Clozel and Shahidi},
note={preprint, \href{https://arxiv.org/abs/2404.05773}{arXiv:2404.05773}},
year={2024}
}
\fi

\bib{HLL}{article}{
author={Hazeltine, Alexander},
author={Liu, Baiying},
author={Lo, Chi-Heng},
title={On the enhanced Shahidi conjecture and global applications},
note={preprint, \href{https://arxiv.org/abs/2404.05773}{https://arxiv.org/abs/2404.05773}},
year={2024}
}

\bib{Humphreys}{book}{
author={Humphreys, J. E.},
title={Conjugacy Classes in Semisimple Algebraic Groups},
year={1995},
series = {Mathematical Surveys and Monographs},
volume = {43},
publisher= {AMS},
}

\iffalse
\bib{central character}{article}{
author={Hazeltine, Alexander},
author={Liu, Baiying},
author={Lo, Chi-Heng},
author={Shahidi, Freydoon},
title={On central characters of representations in local $A$-packets},
Note={Appendix to: Jiang's Conjecture on the wave front sets of local Arthur packets, by Baiying Liu and Freydoon Shahidi},
year={2023},
Journal={}
}
\fi

\bib{HLLZ}{article}{
author={Hazeltine, Alexander},
author={Liu, Baiying},
author={Lo, Chi-Heng},
author={Zhang, Qing},
title={The closure ordering conjecture on local 
$L$-parameters in local $A$-packets of classical groups},
note={preprint, \href{https://arxiv.org/abs/2209.03816}{arXiv:2209.03816}},
year={2022},
}

\iffalse
\bib{heiermann}{article}{
   author={Heiermann, Volker},
   title={D\'{e}composition spectrale et repr\'{e}sentations sp\'{e}ciales
   d'un groupe r\'{e}ductif $p$-adique},
   journal={J. Inst. Math. Jussieu},
   volume={3},
   date={2004},
   number={3},
   pages={327--395},
   issn={1474-7480},
   doi={10.1017/S1474748004000106},
}
\fi

\bib{heiermannorbit}{article}{
  author  = {Heiermann, V.},
  title   = {Orbites unipotentes et p\^oles d'ordre maximal de la fonction $\mu$ d'{H}arish-{C}handra},
  journal = {Canad. J. Math.},
  year    = {2006},
  volume  = {58},
  pages   = {1203-1228},
}

\iffalse
\bib{opdamh}{article}{
 AUTHOR = {Heiermann, Volker},
author={ Opdam, Eric},
     TITLE = {On the tempered {$L$}-functions conjecture},
   JOURNAL = {Amer. J. Math.},
    VOLUME = {135},
      YEAR = {2013},
    NUMBER = {3},
     PAGES = {777--799},
      ISSN = {0002-9327},
       DOI = {10.1353/ajm.2013.0026},
       URL = {https://doi.org/10.1353/ajm.2013.0026},
}
\fi

\iffalse
\bib{HO: tempered}{article}{
AUTHOR = {Heiermann, Volker} 
author={ Opdam, Eric},
     TITLE = {On the tempered {$L$}-functions conjecture},
   JOURNAL = {Amer. J. Math.},
  FJOURNAL = {American Journal of Mathematics},
    VOLUME = {135},
      YEAR = {2013},
    NUMBER = {3},
     PAGES = {777--799},
      ISSN = {0002-9327},
   MRCLASS = {11F70 (22E50)},
  MRNUMBER = {3068402},
MRREVIEWER = {Dubravka Ban},
       DOI = {10.1353/ajm.2013.0026},
       URL = {https://doi.org/10.1353/ajm.2013.0026},
}
\fi

\bib{JL}{article}{AUTHOR = {Jantzen, Chris},
Author={ Liu, Baiying},
     TITLE = {The generic dual of {$p$}-adic split {$SO_{2n}$} and local
              {L}anglands parameters},
   JOURNAL = {Israel J. Math.},
    VOLUME = {204},
      YEAR = {2014},
    NUMBER = {1},
     PAGES = {199--260},
      ISSN = {0021-2172},
       DOI = {10.1007/s11856-014-1091-2},
       URL = {https://doi.org/10.1007/s11856-014-1091-2},}

\bib{Jiang-Soudry}{article}{
Author={  Jiang, Dihua},
Author={Soudry, David},
 title={Generic representations and local Langlands reciprocity law for p-adic $\SO_{2n+1}$},
 BOOKTITLE = {Contributions to automorphic forms, geometry, and number
              theory},
     PAGES = {457--519},
 PUBLISHER = {Johns Hopkins Univ. Press, Baltimore, MD},
      YEAR = {2004},
}

\bib{Kaletha}{article}{
AUTHOR = {Kaletha, Tasho},
     TITLE = {Genericity and contragredience in the local {L}anglands
              correspondence},
   JOURNAL = {Algebra Number Theory},
    VOLUME = {7},
      YEAR = {2013},
    NUMBER = {10},
     PAGES = {2447--2474},
      ISSN = {1937-0652},
       DOI = {10.2140/ant.2013.7.2447},
       URL = {https://doi.org/10.2140/ant.2013.7.2447},}

\bib{KMSW:Unitary}{article}{
	author = {Kaletha, Tasho},
	author={Minguez, Alberto}, 
	author={Shin, Sug Woo},
	author={White, Paul-James},
	note= {\url{https://arxiv.org/abs/1409.3731}},
	title = {Endoscopic Classification of Representations: Inner Forms of Unitary Groups},
	date = {2014}
}
%\todo{Surely this has been published now! This should be updated?}

\iffalse
\bib{KS}{book}{
   author={Kashiwara, Masaki},
   author={Schapira, Pierre},
   title={Sheaves on manifolds},
   series={Grundlehren der Mathematischen Wissenschaften [Fundamental
   Principles of Mathematical Sciences]},
   volume={292},
   note={With a chapter in French by Christian Houzel;
   Corrected reprint of the 1990 original},
   publisher={Springer-Verlag, Berlin},
   date={1994},
   pages={x+512},
   isbn={3-540-51861-4},
   %review={\MR{1299726}},
}

\bib{Kazhdan-Lusztig}{article}{
author={Kazhdan, David},
author={Lusztig, George},
title={Proof of the Deligne-Langlands conjecture for Hecke algebras},
journal={Invent. Math.},
volume={87},
date={1987},
pages={153--215},
}
\fi

\iffalse
\bib{Konno}{article}{
 AUTHOR = {Konno, Takuya},
     TITLE = {Twisted endoscopy and the generic packet conjecture},
   JOURNAL = {Israel J. Math.},
  %FJOURNAL = {Israel Journal of Mathematics},
    VOLUME = {129},
      YEAR = {2002},
     PAGES = {253--289},
      ISSN = {0021-2172},
   %MRCLASS = {22E35 (22E50)},
 % MRNUMBER = {1910946},
%MRREVIEWER = {Christian R. Kaiser},
       DOI = {10.1007/BF02773167},
       URL = {https://doi.org/10.1007/BF02773167},}
\fi

\bib{Liu}{article}{AUTHOR = {Liu, Baiying},
     TITLE = {Genericity of representations of {$p$}-adic {${\rm Sp}_{2n}$}
              and local {L}anglands parameters},
   JOURNAL = {Canad. J. Math.},
  %FJOURNAL = {Canadian Journal of Mathematics. Journal Canadien de Math\'{e}matiques},
    VOLUME = {63},
      YEAR = {2011},
    NUMBER = {5},
     PAGES = {1107--1136},
      ISSN = {0008-414X},
   %MRCLASS = {22E50 (11S37)},
  %MRNUMBER = {2865733},
%MRREVIEWER = {Gergely Z\'{a}br\'{a}di},
       DOI = {10.4153/CJM-2011-017-2},
       URL = {https://doi.org/10.4153/CJM-2011-017-2},}

\iffalse
\bib{Liu-Shahidi}{article}{
author={Liu, Baiying},
author={Shahidi, Freydoon},
title={ Jiang's Conjecture on the wavefront sets of local Arthur packets},
note={With an appendix \textit{On central characters of representations in local A-packets} by Hazeltine, Lo and Liu, preprint},
year={2022},
}
\fi

\bib{LLS}{article}{
   author={Liu, Baiying},
   author={Lo, Chi-Heng},
   author={Shahidi, Freydoon},
   title={Jiang's conjecture and fibers of the Barbasch-Vogan duality},
   language={English, with English and Croatian summaries},
   journal={Rad Hrvat. Akad. Znan. Umjet. Mat. Znan.},
   volume={28(558)},
   date={2024},
   pages={107--129},
   issn={1845-4100},
   review={\MR{4699722}},
   doi={10.21857/m16wjcwv29},
}

\iffalse
\bib{Lusztig:Intersectioncohomology}{article}{
 author={Lusztig, George},
title={Intersection cohomology complexes on a reductive group},
journal={Invert. Math.},
volume={75},
date={1984},
pages={205--272},
}

\bib{Lusztig:Cuspidal2}{incollection}{
	Author = {Lusztig, George},
	Booktitle = {Representations of groups ({B}anff, {AB}, 1994)},
%	Date-Added = {2018-03-31 19:23:16 +0000},
%	Date-Modified = {2018-03-31 21:01:34 +0000},
%	Mrclass = {22E50 (20G05)},
%	Mrnumber = {1357201},
%	Mrreviewer = {Bhama Srinivasan},
	Note = {With errata for Part I [Inst. Hautes \'Etudes Sci. Publ. Math. No. 67 (1988), 145--202},
	Pages = {217--275},
	Publisher = {Amer. Math. Soc., Providence, RI},
	Series = {CMS Conf. Proc.},
	Title = {Cuspidal local systems and graded {H}ecke algebras. {II}},
	Volume = {16},
	Year = {1995}}
\fi
\iffalse
\bib{Lusztig:Classification1}{article}{
   author={Lusztig, George},
   title={Classification of unipotent representations of simple $p$-adic
   groups},
   journal={Internat. Math. Res. Notices},
   date={1995},
   number={11},
   pages={517--589},
   issn={1073-7928},
   %review={\MR{1369407}},
   doi={10.1155/S1073792895000353},
}

\bib{Lusztig:Classification2}{article}{
   author={Lusztig, George},
   title={Classification of unipotent representations of simple $p$-adic
   groups. II},
   journal={Represent. Theory},
   volume={6},
   date={2002},
   pages={243--289},
   %review={\MR{1927955}},
  doi={10.1090/S1088-4165-02-00173-5},
}

\bib{MM}{article}{
   author={Mishra, Manish},
   title={Generic Representations in L-packets},
   journal={International Journal of Number Theory},
   volume={12},
   number={6},
   date={2016}, 
   pages={1613-1624}
}
\fi
		
\bib{Moeglin}{article}{
   author={M\oe glin, Colette},
   title={Comparaison des param\`etres de Langlands et des exposants \`a
   l'int\'{e}rieur d'un paquet d'Arthur},
%   language={French, with English summary},
   journal={J. Lie Theory},
   volume={19},
   date={2009},
   number={4},
   pages={797--840},
   issn={0949-5932},
   %review={\MR{2599005}},
}

\iffalse
\bib{MT}{article}{
    AUTHOR = {M\oe glin, Colette},
    author={Tadi\'{c}, Marko},
     TITLE = {Construction of discrete series for classical {$p$}-adic
              groups},
   JOURNAL = {J. Amer. Math. Soc.},
    VOLUME = {15},
      YEAR = {2002},
    NUMBER = {3},
     PAGES = {715--786},
      ISSN = {0894-0347,1088-6834},
       DOI = {10.1090/S0894-0347-02-00389-2},
       URL = {https://doi.org/10.1090/S0894-0347-02-00389-2},

}
\fi

\bib{Mok:Unitary}{article}{
   author={Mok, Chung Pang},
   title={Endoscopic classification of representations of quasi-split unitary groups},
   journal={Mem. Amer. Math. Soc.},
   volume={235},
   date={2015},
   number={1108},
   pages={vi+248},
   issn={0065-9266},
   isbn={978-1-4704-1041-4},
   isbn={978-1-4704-2226-4},
   %%review={\MR{3338302}},
   doi={10.1090/memo/1108},
}

 \iffalse   
\bib{Moussaoui:Bernstein}{article}{
   author={Moussaoui, Ahmed},
   title={Centre de Bernstein dual pour les groupes classiques},
%   language={French, with English and French summaries},
   journal={Represent. Theory},
   volume={21},
   date={2017},
   pages={172--246},
   %review={\MR{3694312}},
   doi={10.1090/ert/503},
}
\fi

\bib{Reeder}{article}{
   author={Reeder, Mark},
   title={On the Iwahori-spherical discrete series for $p$-adic Chevalley
   groups; formal degrees and $L$-packets},
   journal={Ann. Sci. \'{E}cole Norm. Sup. (4)},
   volume={27},
   date={1994},
   number={4},
   pages={463--491},
%   issn={0012-9593},
%   %review={\MR{1290396}},
}

\bib{rangarao}{article}{
author= {Ranga Rao, R},
title={Orbital integrals in reductive groups},
journal={Annals of Math},
volume= {96},
date={1972},
pages={505-510}
}

\bib{Reedergeneric}{article}{
author={Reeder, Mark},
title={Whittaker models and unipotent representations of $p$-adic groups},
journal={Math. Ann.},
volume={308},
date={1997},
pages={587-592}
}

\iffalse
\bib{Renard}{book}{
   author={Renard, David},
   title={Repr\'{e}sentations des groupes r\'{e}ductifs $p$-adiques},
%   language={French},
   series={Cours Sp\'{e}cialis\'{e}s},% [Specialized Courses]},
   volume={17},
   publisher={Soci\'{e}t\'{e} Math\'{e}matique de France, Paris},
   date={2010},
   pages={vi+332},
   isbn={978-2-85629-278-5},
   %review={\MR{2567785}},
}
\fi

\bib{Shahidi}{article}{AUTHOR = {Shahidi, Freydoon},
     TITLE = {A proof of {L}anglands' conjecture on {P}lancherel measures;
              complementary series for {$p$}-adic groups},
   JOURNAL = {Ann. of Math. (2)},
 % FJOURNAL = {Annals of Mathematics. Second Series},
    VOLUME = {132},
      YEAR = {1990},
    NUMBER = {2},
     PAGES = {273--330},
      ISSN = {0003-486X},
  % MRCLASS = {11R39 (11F70 11S37 22E35 22E55)},
  %MRNUMBER = {1070599},
%MRREVIEWER = {Stephen Gelbart},
       DOI = {10.2307/1971524},
       URL = {https://doi.org/10.2307/1971524},
}

\iffalse
\bib{silberger}{article}{
  author = {Silberger, Allan J.},
  title = {Discrete Series and Classification for p-Adic Groups I},
  journal = {American Journal of Mathematics},
  ISSN = {00029327, 10806377},
  URL = {http://www.jstor.org/stable/2374232},
  number = {6},
  pages = {1241--1321},
  publisher = {Johns Hopkins University Press},
  volume = {103},
  year = {1981},
}
\fi

	\iffalse
\bib{Solleveld}{article}{
author={Solleveld, Maarten},
title={A local Langlands correspondence for unipotent representations},
journal={American Journal of Mathematics},
date={2023},
volume={145},
pages={673-719}
}
\fi

\bib{Solleveld}{article}
{
author={Solleveld, Maarten},
title={On submodules of standard modules},
year={2023},
note={arXiv: 2309.10401, https://arxiv.org/abs/2309.10401},
}

\bib{Vogan:Langlands}{article}{
   author={Vogan, David A., Jr.},
   title={The local Langlands conjecture},
   conference={
      title={Representation theory of groups and algebras},
   },
   book={
      series={Contemp. Math.},
      volume={145},
      publisher={Amer. Math. Soc., Providence, RI},
   },
   date={1993},
   pages={305--379},
   %review={\MR{1216197}},
   doi={10.1090/conm/145/1216197},
}

\iffalse
\bib{Vooys}{article}{
title={Categories of Pseudocones and Equivariant Descent},
author={Vooys, Geoff},
date={2024},
note={\href{https://arxiv.org/abs/2401.10172}{https://arxiv.org/abs/2401.10172}},
}
\fi

\bib{wald}{article}{
   author={Waldspurger, J.-L.},
   title={La formule de Plancherel pour les groupes $p$-adiques (d'apr\`es
   Harish-Chandra)},
   journal={J. Inst. Math. Jussieu},
   volume={2},
   date={2003},
   number={2},
   pages={235--333},
   issn={1474-7480},
 }

\iffalse
\bib{Xu}{article}{
    AUTHOR = {Xu, Bin},
     TITLE = {Arthur packets for quasisplit {$GSp(2n)$} and {$ GO(2n)$} over
              a {$p$}-adic field},
   JOURNAL = {Int. Math. Res. Not. IMRN},
%  FJOURNAL = {International Mathematics Research Notices. IMRN},
      YEAR = {2024},
    NUMBER = {13},
     PAGES = {10349--10409},
      ISSN = {1073-7928,1687-0247},
 %  MRCLASS = {22E50 (11F70 11S37)},
 % MRNUMBER = {4770373},
       DOI = {10.1093/imrn/rnae086},
       URL = {https://doi.org/10.1093/imrn/rnae086},
}
\fi

\end{biblist}
\end{bibdiv}

\end{document}